\def\BState{\State\hskip-\ALG@thistlm}
\setlist[itemize]{leftmargin=*}
\setlist[itemize]{leftmargin=*}
\newtheorem{thm}{Theorem}
\newtheorem{prop}{Proposition}
\newtheorem{cor}{Corollary}
\newtheorem{remark}{Remark}
\newtheorem{lemma}{Lemma}
\newcommand{\vc}{\mathbf{c}}
\newcommand{\ve}{\mathbf{e}}
\newcommand{\vx}{\mathbf{x}}
\newcommand{\vy}{\mathbf{y}}
\newcommand{\vu}{\mathbf{u}}
\newcommand{\vR}{\mathbf{R}}
\newcommand{\vv}{\mathbf{v}}
\newcommand{\vt}{\mathbf{t}}
\newcommand{\vn}{\mathbf{n}}
\newcommand{\vm}{\mathbf{m}}
\newcommand{\vnu}{\boldsymbol{\nu}}
\newcommand{\vF}{\mathbf{F}}
\newcommand{\vL}{\mathbf{L}}
\newcommand{\vC}{\mathbf{C}}
\newcommand{\vb}{\mathbf{b}}
\newcommand{\vxl}{\vx_\lambda}
\newcommand{\jump}[1]{[ #1 ]} 
\newcommand{\Th}{\mathcal{T}_h} 
\newcommand{\Eh}{\mathcal{E}_h} 
\newcommand{\E}{\mathcal{E}} 
\newcommand{\V}{\mathbb{V}} 
\newcommand{\R}{\mathbb{R}} 
\newcommand{\bz}{\boldsymbol{0}}
\newcommand{\calO}{\mathcal{O}}
\newcommand{\Id}{\mathbf{I}} 
\newcommand{\I}{\normalfont{\Romanbar{1}}}
\newcommand{\II}{\normalfont{\Romanbar{2}}} 
\newcommand{\tr}{{\rm tr}} 
\newcommand{\curl}{{\rm curl}} 
\newcommand{\argmin}{{\rm argmin}} 
\newcommand{\diam}{{\rm diam}} 
\newcommand{\wt}{\widetilde}
\newcommand{\tol}{\textrm{tol}}
\def\restriction#1#2{\mathchoice
	{\setbox1\hbox{${\displaystyle #1}_{\scriptstyle #2}$}
		\restrictionaux{#1}{#2}}
	{\setbox1\hbox{${\textstyle #1}_{\scriptstyle #2}$}
		\restrictionaux{#1}{#2}}
	{\setbox1\hbox{${\scriptstyle #1}_{\scriptscriptstyle #2}$}
		\restrictionaux{#1}{#2}}
	{\setbox1\hbox{${\scriptscriptstyle #1}_{\scriptscriptstyle #2}$}
		\restrictionaux{#1}{#2}}}
\def\restrictionaux#1#2{{#1\,\smash{\vrule height .8\ht1 depth .85\dp1}}_{\,#2}} 
\def\lucas{\textcolor{black}}
\def\shuo{\textcolor{black}}
\title[Membrane model for liquid crystal polymer networks]{Reduced Membrane Model for Liquid Crystal Polymer Networks: Asymptotics and Computation}
\author{Lucas Bouck}
\address[Lucas Bouck]{\lucas{Department of Mathematics, Carnegie Mellon University,
		Pittsburgh, Pennsylvania 15213, USA.}}
\email{\lucas{lbouck@andrew.cmu.edu}}
\author{Ricardo H. Nochetto}
\address[Ricardo H. Nochetto]{Department of Mathematics and Institute for Physical Science
		and Technology \\ University of Maryland,
		College Park, Maryland 20742, USA.}
\email{rhn@umd.edu}
\author{Shuo Yang}
\address[Shuo Yang]{\shuo{Beijing Institute of Mathematical Sciences and Applications}, Beijing 101408, China.}
\email{shuoyang@bimsa.cn}
\date{\today}
\begin{document}
\maketitle
\begin{abstract}
We examine a reduced membrane model of liquid crystal polymer networks (LCNs) via asymptotics and computation. This model requires solving a minimization problem for a non-convex stretching energy. We show a formal asymptotic derivation of the $2D$ membrane model from $3D$ rubber elasticity. We construct approximate solutions with point defects. We design a finite element method with regularization, and propose a nonlinear gradient flow with Newton inner iteration to solve the non-convex discrete minimization problem. We present numerical simulations of practical interests to illustrate the ability of the model and our method to capture rich physical phenomena.
\end{abstract}

\section{Introduction}
Liquid crystal polymer networks (LCNs) are materials that combine elastomeric polymer networks with mesogens (compounds that display liquid crystal properties). The long rod-like molecules of liquid crystals are \emph{densely} crosslinked with the elastomeric polymer network. This contrasts with liquid crystal elastomers (LCEs), whose crosslinks are less dense. The orientation of the liquid crystal (LC) molecules can be represented by a director. The orientation of the director influences deformation of materials when actuated. Common modes of actuation are heating \cite{aharoni2018universal, ware2015voxelated} and light \cite{camacho2004fast,mcconney2013topography}. In this study, we focus on such actuated deformations of LCNs.

LCNs are one of many possible materials that enable spontaneous mechanical motion under a stimulus. This has been referred to as ``the material is the machine'' \cite{bhattacharya2005material}. Due to this feature, engineers create soft robots using LCNs/LCEs materials, such as thermo-responsive micro-robots with autonomous locomotion in unstructured environments \cite{zhao2022twisting}, soft materials that ``swim'' away from light \cite{camacho2004fast}, and LCN actuators that can lift an object tens of times its weight \cite{ware2015voxelated}. They offer abundant application prospects, for instance in the design of biomedical devices \cite{li2006artificial,hebert1997dynamics}.

Since the deformation of LCNs depend on the orientation of the nematic director, the director can be blueprinted or programmed so that the materials achieve desired shapes \cite{aharoni2018universal, white2015programmable, modes2011blueprinting, warner2020topographic}. Some methods to program the orientation of the liquid crystals include mechanical alignment \cite{camacho2004fast}, photoalignment \cite{ware2015voxelated}, and additive manufacturing \cite{kotikian20183d}, which is a subset of 4D printing \cite{kuang2019advances}. Even if the director is constant throughout the material, interesting shapes may occur due to nonuniform actuation. An example of nonuniform actuation from light is the LCE swimmer in \cite{camacho2004fast}. Two reviews of experimental work on LCEs/LCNs can be found in \cite{white2015programmable, mccracken2020materials}.

For 3D bodies, one of the most accepted elastic energies for modeling the interaction of the material deformation with the LCs is known as the {\it trace formula} \cite{bladon1994deformation, warner2007liquid,warner2003thermal}, although other types of elastic energies have been proposed \cite{desimone2009elastic}. Depending on the density of crosslinks, the director field may be either totally free \cite{cesana2015effective, desimone2002macroscopic} or subject to a Frank elasticity term \cite{barchiesi2015frank, luo2012numerical, plucinsky2018actuation, bartels2022nonlinear}. This kind of blueprinted configuration describes the situation where the LCs are unconstrained or constrained only on a low-level by rubbery polymers.
On the other hand, the LCs may also be frozen into the material via a direct algebraic constraint \cite{ozenda2020blend, cirak2014computational}, \lucas{an approach that we also follow; see our Eq.\ \eqref{eq:kinematic-cons}.
The} director's deviation from \eqref{eq:kinematic-cons} may be penalized with a nonideal energy contribution \cite{plucinsky2018actuation,verwey1997compositional}. 
Although the LCs may also be subject to a Frank energy term, a key modeling difference between LCE/LCNs and nematic LCs is that the former are constrained by the rubber. It is known that higher degree defects are unstable \cite{brezis1986harmonic} for the one constant Frank model of nematic LCs. \lucas{However in LCNs, higher degree defects do not split apart due to the constraining nature of the polymer network. We refer to \cite{mcconney2013topography} for blueprinted defects with degrees up to order $10$ in LCNs.}


The energy scaling with respect to thickness in models of thin $3D$ elastic bodies dictates 2D models of LCNs/LCEs. If the energy is scaled linearly with the thickness, the resulting model is a membrane model: the energy is a function of the first fundamental form of the deformed surface and encodes stretching. Works that studied membrane models include \cite{cesana2015effective, ozenda2020blend, conti2018adaptive}. For LCNs, the first fundamental form of zero stretching energy states satisfy a pointwise metric condition. Extensive work dedicated to examining configurations that satisfy this metric condition include \cite{modes2011gaussian, mostajeran2015curvature, plucinsky2018patterning, plucinsky2016programming, plucinsky2017deformations, modes2011blueprinting, warner2018nematic, aharoni2014geometry, mostajeran2016encoding}. For a review of these techniques, we refer to \cite{warner2020topographic}. The second common scaling is a cubic scaling in the thickness, and results in a plate model driven by bending. 
The metric condition giving zero stretching energy becomes a constraint in the bending model. Some existing bending models include theory derived via formal asymptotics \cite{ozenda2020blend}, a von Karman plate model derived in \cite{mihai2020plate} using asymptotics, a rigorous Gamma convergence theory for a model of bilayer materials composed of LCEs and a classical isotropic elastic plate \cite{bartels2022nonlinear}, or a plate model where the LC dramatically changes its orientation through the thickness \cite{agostiniani2020rigorous}. Moreover, reduced $1D$ models for LCNs/LCEs have been explored as well; we refer to \cite{bartels2022rods} for a rod model and to \cite{agostiniani2017shape, singh2022ribbon} for ribbon models.

The computation of LCEs/LCNs has received considerable attention in recent years. Publications include computations of various membrane models \cite{plucinsky2017deformations}, a membrane model with regularization \cite{cirak2014computational}, a bending model of LCE bilayer structure \cite{bartels2022nonlinear}, a relevant 2D model for LCEs \cite{luo2012numerical}, 3D models \cite{conti2002soft,chung2017finite}, and LCE rods \cite{bartels2022rods}. Paper \cite{luo2012numerical} proves well-posedness of a mixed method for a 2D model with Frank-Oseen regularization. 

The goal of this paper is to predict actuated equilibrium shapes of thin LCN membranes using a finite element method (FEM). We discretize a membrane energy of LCNs using piecewise linear finite elements and add a numerical regularization that mimics a higher order bending energy. To solve the discrete minimization problem, we design a nonlinear gradient flow with an embedded Newton method.
Our FEM is able to predict configurations of LCNs of practical significance, whose solutions are hard or impossible to derive by hand. We present salient examples in Section \ref{sec:incompatible-s} of LCNs with preferred discontinuous metric. We complement the numerical study with the derivation of the LCN model via Kirchhoff-Love asymptotics, and the development of a new formal asymptotic method to approximate shapes of membranes that arise from higher order defects, which we discuss first.

Our companion paper \cite{bouck2022NA} provides a numerical analysis of the finite element method (FEM) described in this article. We refer to Section \ref{sec:contributions} for a list of our main contributions and outline.

\subsection{3D elastic energy: Neo-classical energy}
We are concerned with thin films of LCNs. Slender materials are usually modeled as $3D$ hyper-elastic bodies $\mathcal{B}:=\Omega\times(-t/2,t/2)$, with $\Omega\subset \mathbb{R}^2$ being a bounded Lipschitz domain and $t>0$ being a small thickness parameter. We denote by $\vu:\mathcal{B}\to\mathbb{R}^3$ the 3D deformation and by $\vF:=\nabla\vu\in\mathbb{R}^{3\times3}$ the deformation gradient of the LCNs material. 

We denote by $\vm:\mathcal{B}\to \mathbb{S}^2$ the \textit{blueprinted} nematic director field on the reference configuration and by $\vn:\mathcal{B}\to\mathbb{S}^2$ the director field on the deformed configuration. The former is dictated by construction of the LCNs material, whereas the latter obeys an equation that depends on the density of crosslinks between the mesogens and polymer network. LCNs (also called liquid crystal glasses) have moderate to dense crosslinks whereas liquid crystals elastomers (LCEs) have low crosslinks \cite{white2015programmable}. In this paper, we focus on LCNs and leave a numerical study of LCEs for future research. Mathematically, the strong coupling in LCNs is expressed in terms of the following {\it kinematic constraint} between $\vm$ and $\vn$ \cite{ozenda2020blend}: 
\begin{equation}\label{eq:kinematic-cons}
\vn:=\frac{\vF\vm}{|\vF\vm|}.
\end{equation}
In contrast to LCEs \cite{warner2007liquid,bartels2022nonlinear}, $\vn$ is not a free variable but rather a frozen director field for LCNs \cite{cirak2014computational}. For LCEs the energy density may be minimized over $\vn$ first and next over $\vF$, like in \cite{cesana2015effective, conti2002soft}, or a Frank elastic energy for $\vn$ may be introduced (c.f.\ \cite{barchiesi2015frank,bartels2022nonlinear,luo2012numerical}). Moreover, we note that a director field description may not be the only choice for modeling LC components. One can also formulate a model with $Q$-tensor descriptions like in \cite{calderer2015landau}.

The LC effect on the material is governed by the so-called step-length tensors
\begin{equation}\label{eq:def-Lm}
\vL_{\vm}:=(s_0+1)^{-1/3}(\Id_3+s_0\vm\otimes\vm) 
\end{equation}
in the {\it reference} configuration, and
\begin{equation}\label{eq:def-Ln}
\vL_{\vn}:=(s+1)^{-1/3}(\Id_3+s\vn\otimes\vn) 
\end{equation}
in the {\it deformed} configuration. Both of these are uniaxial tensor fields that exhibit the typical head-to-tail symmetry commonly observed in LCs. Our definition of these step length tensors follows the notation of \cite{ozenda2020blend, nguyen2017theory}. The tensor $\vL_{\vm}$ can be related to the more commonly encountered step length tensor $\ell_\perp^0\Id_3 +(\ell_\parallel^0-\ell_\perp^0)\vm\otimes\vm$ \cite[Eq. (13)]{corbett2008polarization} by setting $\ell_\perp^0 = (s_0+1)^{-1/3}$ and $\ell_\parallel^0 = (s_0+1)^{2/3}$. These step length tensors measure the anisotropy contributed by nematogenic molecular units to nematic elastomers/networks, which are isotropic solids with a fluid-like anisotropic ordering.
Moreover, $s_0,s\in L^\infty(\Omega)$ are nematic order parameters that refer to the reference configuration and deformed configuration respectively. They are typically constant and depend on temperature, but may also vary in $\Omega$ if the liquid crystal polymers are actuated non-uniformly. These parameters have a physical range
\[
-1 < s_0,s \le C < \infty.
\]
Consequently, both $\vL_{\vm}$ and $\vL_{\vn}$ are SPD tensor fields, which reduce to the identity matrix, i.e. $\vL_{\vm} = \vL_{\vn} = \Id_3\in\R^{3\times3}$, provided $s=s_0=0$ (no actuation).

The neo-classical energy density for incompressible nematic elastomers/networks has been proposed by Bladon, Warner and Terentjev in \cite{bladon1994deformation, warner2007liquid,warner2003thermal} and reads
\begin{equation}\label{eq:neo-classical}
W_{3D}(\vx,\vF):= \tr \big(\vF^T\vL_{\vn}^{-1}\vF\vL_{\vm} \big) - 3;
\end{equation}
this energy depends explicitly on the space variable $\vx:=(\vx',x_3):=(x_1,x_2,x_3)\in\mathcal{B}$ due to the dependence of $\vm,\vn,s,s_0$ on $\vx$.
The energy \eqref{eq:neo-classical} can be rewritten as the neo-Hookean energy density
\begin{equation}\label{eq:neo-hookian}
W_{3D}(\vx,\vF)= \big|\vL_{\vn}^{-1/2}\vF\vL_{\vm}^{1/2}\big|^2- 3,
\end{equation}
and reduces to the classical neo-Hookean energy density for rubber-like materials $W_{3D}(\vF) = |\vF|^2- 3$ provided $s=s_0=0$. Moreover, the material is assumed to be incompressible, i.e,
\begin{equation}\label{eq:incompressible}
\det \vF =1.
\end{equation}
The 3D energy density $W_{3D}$ is {\it non-degenerate}, namely 
\begin{equation}\label{eq:nondegeneracy}
W_{3D}(\vx, \vF)\geq \mathrm{dist} \big(\vL_{\vn}^{-1/2}\vF\vL_{\vm}^{1/2},SO(3) \big)^2\ge0 
\end{equation}
for all $\vF\in \mathbb{R}^{3\times3}$ such that $\det \vF = 1$.
We refer to \cite[Appendix A]{plucinsky2018actuation} and \cite{bouck2022NA} for a proof of this fundamental property. 

The 3D elastic energy is given in terms of the energy densities \eqref{eq:neo-classical} or \eqref{eq:neo-hookian} by
\begin{equation}\label{eq:3D-energy}
E_{3D}[\vu]=\int_{-t/2}^{t/2}\int_{\Omega}W_{3D}(\vx,\nabla\vu) \, d\vx'dx_3,
\end{equation}
where $\vx'\in\Omega$, $x_3\in(-t/2,t/2)$, and $ \det \nabla\vu = 1$.

\subsection{Model reduction}
We assume the $3D$ blueprinted director field $\vm=(\wt\vm,0):\mathcal{B}\to \mathbb{S}^2$ is planar and it depends only on $\vx'$; hence, with a slight abuse of notation, we identify $\vm$ and $\widetilde\vm$ and let $\vm:\Omega\to\mathbb{S}^1$ be the $2D$ \emph{blueprinted} director field. We denote by $\vm_\perp:\Omega\to\mathbb{S}^1$ a director field perpendicular to $\vm$ everywhere in $\Omega$, and by $\vy:\Omega\to \mathbb{R}^3$ the deformation of 2D midplane $\Omega$.

The $2D$ membrane model requires solving the following minimization problem: find $\vy^{\ast}\in H^1(\Omega; \mathbb{R}^3)$ such that
\begin{equation}\label{eq:stretching-energy}
\vy^{\ast} = \argmin_{\vy\in H^1(\Omega; \mathbb{R}^3)}E_{str}[\vy],\quad E_{str}[\vy]:=\int_\Omega W_{str}(\vx',\nabla\vy)d\vx',
\end{equation}
where $W_{str}$ is a stretching energy density that is only a function of $\vx'\in\Omega$ and of the {\it first fundamental form} $\I[\vy] := \nabla\vy^T\nabla\vy$. It is given by
\begin{equation}\label{eq:stretching-energy-density}
W_{str}(\vx',\nabla\vy) := \lambda \left[\frac{1}{J[\vy]}+\frac{1}{s+1}\left(\text{tr} \, \I[\vy] +  s_0 C_\vm[\vy] + s \frac{J[\vy]}{C_\vm[\vy]}\right)\right]  - 3,
\end{equation}
and, since $s,s_0>-1$, the {\it actuation parameter} $\lambda :\Omega\to \R^+$ is well-defined by
\begin{equation}\label{eq:lambda}
\lambda = \lambda_{s,s_0}  := \sqrt[3]{\frac{s+1}{s_0+1}}.
\end{equation}
If the material is heated, then $\lambda <1$, whereas if it is cooled, then $\lambda >1$.
Moreover, $J[\vy] ,C_\vm[\vy]$ are among the following abbreviations:
\begin{equation}\label{eq:shortenings}
J[\vy] = \det \I[\vy], \quad C_\vm[\vy] = \vm \cdot \I[\vy]\vm,\quad C_{\vm_\perp}[\vy] = \vm_\perp \cdot \I[\vy]\vm_\perp.
\end{equation}
When the second argument of $W_{str}$ is $\vF\in\R^{3\times 2}$, we then use the following notational abbreviations:
\begin{equation}\label{eq:shortenings-F} 
\begin{aligned}
\I(\vF) := \vF^T\vF,& \quad J(\vF) := \det \I(\vF),\\
C_\vm(\vF) := \vm \cdot\I(\vF)\vm,& \quad C_{\vm_\perp}(\vF) := \vm_\perp \cdot \I(\vF)\vm_\perp.
\end{aligned}
\end{equation}
%
We emphasize that since $s,s_0,\vm$ depend on $\vx'\in\Omega$ then $W_{str}$ also has an explicit dependence on $\vx'$.

We note that \eqref{eq:stretching-energy-density} is consistent with the stretching energy in \cite{ozenda2020blend} after additionally assuming an inextensibility constraint $J[\vy] = 1$ and up to the multiplicative parameter $\lambda$ and the constant $-3$. 

The energy $E_{str}$ in \eqref{eq:stretching-energy} is not weakly lower semicontinuous in $H^1(\Omega;\mathbb{R}^3)$, which we show in \cite{bouck2022NA}. As a result, $E_{str}$ may not have minimizers in $H^1(\Omega;\mathbb{R}^3)$, but may admit minimizing sequences \cite{bethuel1999variational}. In fact, one can adapt \cite[Example 2.8]{bouck2022NA} to show that the energy density \eqref{eq:stretching-energy-density} is not quasiconvex  in the sense of \cite[Def. 1.5]{dacorogna2007direct}, the correct notion of convexity for a vector-valued problem. 
The lack of weak lower semi-continuity is responsible for the main difficulties to prove convergence of our discretization as well as to design efficient iterative solvers for the discrete minimization problem. We discuss convergence of discrete minimizers in \cite{bouck2022NA}, and present a nonlinear iterative scheme with inner Newton solver in Section \ref{sec:newton-scheme}.

Throughout this work, we do not impose any boundary condition so that the material under consideration has free boundaries. If necessary, one can take Dirichlet boundary conditions into account with a simple modification on the method.

An important property of the stretching energy is that $W_{str}(\vx',\nabla\vy)=0$ if and only if $\I[\vy]=g$ pointwise, where $g\in\R^{2\times2}$ is the {\it target metric}
\begin{equation}\label{eq:target-metric}
g = \lambda^2\vm\otimes \vm +\lambda^{-1}\vm_\perp\otimes \vm_\perp;
\end{equation}
we show this property in Section \ref{sec:target-metric}. In the physics literature, maps $\vy$ that satisfy the metric constraint $\I[\vy]=g$ are known as spontaneous distortions \cite{warner2007liquid, modes2011gaussian}. The physics community has developed techniques to find such deformations in special situations. Some examples are radially symmetric director fields \cite{warner2018nematic}, cylindrical shapes \cite{aharoni2014geometry}, and nonisometric origami \cite{modes2011blueprinting, plucinsky2018actuation, plucinsky2018patterning, plucinsky2016programming}. We refer to \cite{warner2020topographic} for a review of the techniques to predict shapes based on the metric. The purpose of this work is to provide a different approach via energy minimization and approximation. Rather than constructing $\vy$ analytically such that $\I[\vy] = g$, we numerically approximate minimizers to the stretching energy. 
We will validate our numerical method in Section \ref{sec:simulations} by successfully reproducing the intricate shapes resulting from higher order defects observed in experimental studies \cite{mcconney2013topography}, as well as exact nonisometric origami solutions \cite{plucinsky2018patterning}. An advantage of employing energy minimization and numerical approximation is the ability to tackle more general scenarios that lack exact analytical solutions. We extensively explore incompatible metrics in Section \ref{sec:incompatible-s}, which present significant challenges when attempting to solve $\I[\vy] = g$ exactly or study solutions analytically. Our computations \lucas{inspired} lab experiments \cite{bauman2023private}.

\subsection{Discretizations}
In this work, we propose a FEM discretization to \eqref{eq:stretching-energy}. We consider the space $\V_h$ of continuous piecewise linear finite elements over a shape regular mesh $\Th$, and approximate the deformation $\vy$ by $\vy_h\in\V_h$. To define a discrete energy, we replace $\vy$ in \eqref{eq:stretching-energy} by $\vy_h$ and then add a \emph{regularization} term 
\lucas{
\begin{equation}\label{eq:regularization}
R_h[\vy_h] := \sum_{e\in\E_h}\int_{e} c_r h_e |\jump{\nabla\vy_h}|^2,
\end{equation}
where $c_r:\E_h\to \mathbb{R}$ is a nonnegative function of the skeleton $\E_h$. If $c_r$ is uniformly positive, then $R_h$} mimics a higher order bending energy. This bending regularization term is a scaled $L^2$ norm of jumps $\jump{\nabla \vy_h}$ across all the interior edges $e\in\mathcal{E}_h$ of $\Th$, it represents a scaled discrete Hessian of $\vy_h$, and provides a numerical selection mechanism to remove oscillations from equilibrium configurations; see Section \ref{sec:eff-reg}. \lucas{In cases where $c_r = 0$ on a collection of edges, this allows for folding to occur on those edges. If $c_r$ is uniformly positive across the whole skeleton $\E_h$, we abuse notation and regard $c_r$ as a positive parameter.} We prove convergence of discrete minimizers in our accompanying paper \cite{bouck2022NA}. The regularization $R_h[\vy_h]$ is closely related to the Nelson-Seung discrete bending energy \cite{seung1988defects}, which computes the jump of the normal vector across edges. \shuo{We also note that this energy is quite similar to the ridge energy introduced in \cite{pedrini2021ridge}, which \lucas{accounts for} the angle between normal vectors in the case $s=0$.} We further justify this choice of $R_h$ in Section \ref{sec:method}.
Moreover, we design a nonlinear gradient flow scheme with an embedded Newton sub-iteration, to solve the discrete minimization problem. We refer to Section \ref{sec:method} for details.

The discrete energy, defined as $E_h := E_{str}+R_h$, serves as a discrete counterpart to the blended energy given by
\begin{equation}\label{eq:blend-energy}
E_{str}[\vy] + t^2\int_\Omega|D^2\vy|^2d\vx'.
\end{equation}
The mesh size $h$ in $R_h$ plays a role analogous to the material thickness $t$ in \eqref{eq:blend-energy}. The second term in \eqref{eq:blend-energy} represents a simplified version of the bending energy, as discussed in \cite[Eq. (1.11)]{plucinsky2018actuation}. According to \cite{plucinsky2018actuation}, this energy term exhibits asymptotic behavior similar to that of a fully nonlinear elastic model as $t\to0$. The inclusion of the higher-order term provides additional compactness, motivated by the geometric rigidity described in \cite{friesecke2002theorem}. Similarly, the regularization $R_h$ in \eqref{eq:regularization} provides compactness if $E_h$ scales quadratically at minimizers, namely $E_h[\vy_h]\le\Lambda h^2$. This is stated in Theorem \ref{T:convergence} and proved in our accompanying paper \cite{bouck2022NA}.


\subsection{Our contributions and outline of the paper}\label{sec:contributions}
We summarize our main contributions in this work as follows. 

\begin{itemize}
\item \textbf{Asymptotic derivation of the $2D$ membrane model.} In Section \ref{sec:asymptotics} we derive a $2D$ membrane model from $3D$ rubber elasticity via an asymptotic analysis, motivated by \cite{ozenda2020blend}. However, we remove the inextensibility assumption $\det\I [\vy]=1$ of \cite{ozenda2020blend}. 
\shuo{We additionally show that this modified model exhibits a lower 3D energy density compared to the ansatz in \cite{ozenda2020blend}.}
Moreover, we provide a concise and novel proof that global minimizers of \eqref{eq:stretching-energy} satisfy the target metric \eqref{eq:target-metric} in Section \ref{sec:target-metric}.\looseness=-1

\smallskip  
\item \textbf{Asymptotic profiles of defects.} In Section \ref{sec:special-solution}, we present a new formal construction of solutions for rotationally symmetric blueprinted director fields $\vm$ with a defect of degree $n>1$. Our technique hinges on the ideas of lifted surfaces (inspired by \cite{plucinsky2018actuation}),  composition of defects and Taylor expansion.

\smallskip   
\item \textbf{Finite element discretization and iterative solver.} In Section \ref{sec:method}, we present our new finite element discretization of \eqref{eq:stretching-energy} that includes the regularization term \eqref{eq:regularization}, as well as a nonlinear gradient flow scheme together with a Newton sub-iteration to solve the resulting discrete nonconvex minimization problem.
  
\smallskip 
\item \textbf{Numerical simulations.} In Section \ref{sec:simulations}, we present numerous simulations of practical interest. They illustrate the ability of our discrete membrane model to capture intriguing physical phenomena such as origami-like structures and deformations due to defects with varying degree. Furthermore, our discrete method enables us to investigate computationally novel incompatible origami structures, where the metric $g$ is discontinuous. Our computational findings might provide valuable insights for future endeavors in modeling, analysis, laboratory experiments, and potential applications of LCN materials.
\end{itemize}  


\section{Membrane model of liquid crystals polymer networks}
In this section, we introduce a formal asymptotic derivation of a membrane model of LCNs and discuss properties of the model related to its global minimizers. We also describe a modified asymptotics that leads to a bending model.

\subsection{Derivation of stretching energy from asymptotics}\label{sec:asymptotics}
This section is dedicated to deriving a 2D stretching or membrane energy from \eqref{eq:3D-energy} via formal asymptotics as the thickness $t$ goes to zero. In particular, we shall derive the formal limit $\lim_{t\to0}\frac{1}{t}E_{3D}[\vu]$. This procedure will follow closely the derivation of \cite{ozenda2020blend}, but we will relax the simplifying assumption $\det \I[\vy] = 1$ made in \cite{ozenda2020blend}. We also contrast the asymptotic method presented here with the more analytical method presented in \cite{cirak2014computational}. 

\subsubsection{Kirchhoff-Love assumption and overview of strategy}\label{sec:asymptotic-1}
We assume that the 3D deformation $\vu: \mathcal{B}:=\Omega \times (-t/2,t/2)\to \mathbb{R}^3$ takes the form
\begin{equation}\label{eq:kirchhhoff-love-assumption}
\vu(\vx',x_3) = \vy(\vx') +\phi(\vx',x_3) \, \vnu(\vx')
\end{equation}
where $\vy:\Omega\to \mathbb{R}^3$ is the reduced deformation and $\vnu:\Omega\to \mathbb{R}^3$ is the normal to the deformed midplane $\vy(\Omega)$. We posit that $\phi$ takes the form:
\begin{equation}\label{eq:phi-expansion}
\phi(\vx',x_3) = \alpha(\vx')x_3 + \mathcal{O}(x_3^2),
\end{equation}
which is a modified {\it Kirchhoff-Love assumption}. The higher order terms would be useful for deriving the bending energy, but we do not need them for the stretching energy. Note that $\alpha$ is undetermined for the moment. Later, $\alpha$ will be chosen so that $\det\nabla \vu$ is nearly 1, i.e.\ $\det\nabla\vu(\vx',x_3) = 1 +\mathcal{O}(x_3)$. The approach we adopt here follows \cite{ozenda2020blend}. We assume that the integral formula \eqref{eq:3D-energy} is valid and finite if the deformation $\vu$ satisfies $\det\nabla\vu(\vx',x_3) = 1 +\mathcal{O}(x_3)$. To justify this assumption heuristically, if $\vu$ is smooth and satisfies $\det\nabla\vu(\vx',x_3) = 1 +\mathcal{O}(x_3)$, then we expect that for sufficiently small $t$ one can find an incompressible $\vv$ such that 
\begin{equation}\label{eq:3D-energy-approx-uv}
|E_{3D}[\vv] - E_{3D}[\vu]| \leq Ct^2.
\end{equation} 
The non-degeneracy \eqref{eq:nondegeneracy} still holds for $\vF=\nabla\vu$ as $t\to0$ in view of \eqref{eq:3D-energy-approx-uv}.

To prove \eqref{eq:3D-energy-approx-uv}, if $\vu(\vx',0), \partial_3\vu(\vx',0) \in C^\infty(\overline{\Omega};\mathbb{R}^3)$ are such that  $\det\nabla\vu(\vx',0)= 1$, then \cite[Proposition 5.1]{conti2006derivation} states that there is a $t_0>0$ and  $\vv\in C^\infty(\overline{\Omega}\times(-t_0/2,t_0/2);\mathbb{R}^3)$ that satisfies $\det\nabla\vv(\vx',x_3) = 1$ everywhere in $\overline{\Omega}\times(-t_0/2,t_0/2)$, $\vv(\vx',0) = \vu(\vx',0)$, and $|\nabla\vv(\vx',x_3) - \nabla \vu(\vx',0)|\leq Cx_3$. The uniform pointwise estimate and integrating in the $x_3$ direction would show $|E_{3D}[\vv] - E_{3D}[\vu]| \leq Ct^2$.

The goal of asymptotics is to write the energy $W_{3D}(\vx,\nabla \vu)$ given in \eqref{eq:neo-classical} for the deformation $\vu$ in terms of powers of $t$ and the reduced stretching energy $W_{str}(\vx',\nabla'\vy)$
\begin{equation}\label{eq:expansion}
\int_{-t/2}^{t/2}\int_\Omega W_{3D}(\vx,\nabla \vu) \, d\vx' dx_3 = t \int_\Omega W_{str}(\vx',\nabla'\vy) \, d\vx' + \mathcal{O}(t^3),
\end{equation}
where $\nabla':=(\partial_1,\partial_2)$ denotes the gradient with respect to $\vx'$.
The stretching energy $W_{str}$ in \eqref{eq:expansion} gives the leading order effects of the energy as the body thickness $t$ vanishes in the sense that formally
\[
\lim_{t\to0}\frac{1}{t} \int_{-t/2}^{t/2}\int_\Omega {W}_{3D} (\vx,\nabla \vu) \, d\vx' dx_3 = \int_\Omega W_{str}(\vx',\nabla'\vy) \, d\vx'.
\]
The third order term in \eqref{eq:expansion} corresponds to the bending energy $W_{ben}$ and is not the focus of the current work.
Combined with the modified Kirchhoff-Love assumption \eqref{eq:kirchhhoff-love-assumption}, the process to derive the stretching energy is as follows:
\begin{enumerate}
\item Write the right Cauchy-Green tensor $\vC = \vF^T\vF$ in terms of leading order terms.
\item Write $W_{3D}$ in terms of $\vC$ and powers of $x_3$.
\item Collect $\mathcal{O}(1)$ terms of $W_{3D}$ which contribute to the stretching energy.
\item Determine $\alpha$ so that $\vu$ satisfies incompressibility in an asymptotic sense.
\end{enumerate}

\subsubsection{Right Cauchy-Green tensor}
Substituting \eqref{eq:kirchhhoff-love-assumption} into $\vC:=\nabla\vu^T\nabla\vu$ yields
\begin{equation}
\vC = \nabla \vu^T\nabla \vu = \begin{bmatrix}\vC_\phi & \vC_{1\times 2}^T\\ \vC_{1\times 2} & \vC_{1\times 1}\end{bmatrix},
\end{equation}
where
\begin{align}
\vC_\phi &= \nabla'\vy^T\nabla'\vy + \phi (\nabla'{\vnu}^T\nabla'\vy+ \nabla'\vy^T\nabla'{\vnu})+ \phi^2 \nabla'{\vnu}^T\nabla'{\vnu} +\nabla'\phi\otimes \nabla'\phi\\
\vC_{1\times 2} &= (\vnu\otimes \nabla' \phi)^T\partial_3 \phi \vnu = \partial_3\phi \nabla' \phi\\
\vC_{1\times 1} &= (\partial_3 \phi)^2.
\end{align}
Here, we have used the facts that $\nabla'\vy^T\vnu=0$, $\nabla'{\vnu}^T\vnu=0$ and $|\vnu|=1$. 
Since $\nabla'\phi(\vx',x_3) = \nabla'\alpha x_3+ \mathcal{O}(x_3^2)$ and $\partial_3 \phi(\vx',x_3) = \alpha +\mathcal{O}(x_3)$, we have $C_{1\times2}~=~\mathcal{O}(x_3)$, and hence we may drop $C_{1\times2}$ as higher order terms.

Also ignoring any terms higher than constant order, we have 
\begin{equation*}
\vC_\phi =  \I[\vy] +\mathcal{O}(x_3),
\end{equation*}
where $\I[\vy]=\nabla'\vy^T\nabla'\vy$ is the first fundamental form of $\vy$. Since 
\begin{equation*}
\big(\partial_3\phi(\vx',x_3) \big)^2 = \alpha(\vx')^2 +\mathcal{O}(x_3),
\end{equation*}
we find
\begin{equation}\label{eq:C-asymp}
\vC = \begin{bmatrix}\I[\vy] & 0\\ 0 & \alpha(\vx')^2\end{bmatrix} +\mathcal{O}(x_3).
\end{equation}

\subsubsection{Expanding $W_{3D}$}
Recall that we assume that the $3D$ blueprinted director field ${\bf m}$ lies in the plane i.e. 
\begin{equation}\label{eq:m-in-plane}
{\bf m}(\vx) = ({\bf \wt{m}}(\vx'), 0 )^T.
\end{equation} 
First, substituting the kinematic constraint \eqref{eq:kinematic-cons} into \eqref{eq:neo-classical} with $\vF=\nabla\vu$, we obtain
\begin{equation}\label{eq:3D-energy-intermediate}
W_{3D}(\vx',\nabla\vu)=\lambda\Big(\tr \, \vC+\frac{s_0}{s+1}\vm\cdot \vC\vm-\frac{s}{s+1}\frac{\vm\cdot \vC^2\vm}{\vm\cdot \vC\vm}\Big)-3,
\end{equation}
where $\lambda$ is defined in \eqref{eq:lambda}, and we notice that from now on $W_{3D}$ depends on $\vx'$ instead of $\vx$, due to the assumption \eqref{eq:m-in-plane}. 
Then plugging the asymptotic form \eqref{eq:C-asymp} of $\vC$ into \eqref{eq:3D-energy-intermediate} and using \eqref{eq:m-in-plane}, the energy density $W_{3D}(\vx',\nabla\vu)$ becomes
\begin{equation*}
\begin{aligned}
W_{3D} &(\vx',\nabla\vu) \\
&  
= \lambda\left[\text{tr} \,\I[\vy] +\alpha(\vx')^2+  \frac{s_0}{s+1} {\bf \wt{m}}\cdot \I[\vy]{\bf \wt{m}} - \frac{s}{s+1} \frac{{\bf \wt{m}}\cdot \I[\vy]^2{\bf \wt{m}}}{{\bf \wt{m}}\cdot \I[\vy]{\bf \wt{m}}}\right]-3 +\mathcal{O}(x_3).
\end{aligned}
\end{equation*}
Since $\I[\vy]$ is a $2\times 2$ matrix, the Cayley-Hamilton Theorem gives 
$$\I[\vy]^2 = \big(\text{tr} \, \I[\vy]\big) \, \I[\vy] - \text{det}\, \I[\vy] \, \Id_2,$$
so that the energy now reads
\begin{equation*}
\begin{aligned} 
  W_{3D} & (\vx',\nabla\vu)
  \\
  & = \lambda\left[\alpha(\vx')^2+\frac{1}{s+1}\left(\text{tr} \,\I[\vy] +  s_0 {\bf \wt{m}}\cdot \I[\vy]{\bf \wt{m}} + s \frac{\text{det} \I[\vy]}{{\bf \wt{m}}\cdot \I[\vy]{\bf \wt{m}}}\right)\right] - 3 +\mathcal{O}(x_3).
\end{aligned}  
\end{equation*}
We now have all the constant order terms of $W_{3D}(\vx',\nabla\vu)$. The only remaining task is to determine $\alpha(\vx')$. We do this next.

\subsubsection{Incompressibility}\label{sec:asymptotic-4}
Since we would like $\vu$ to satisfy incompressibility $\det \nabla\vu = 1 + \mathcal{O}(x_3)$, we impose $\text{det} \, \vC = 1+ \mathcal{O}(x_3)$. By \eqref{eq:C-asymp}, we see that
\begin{equation}
\text{det} \,\vC = \text{det} \,\I[\vy] \,\alpha(\vx')^2 + \mathcal{O}(x_3),
\end{equation}
whence
\begin{equation}\label{eq:alpha}
\alpha(\vx') = \frac{1}{\sqrt{\text{det} \, \I[\vy] }}
\end{equation}
gives us the desired equality $\text{det} \,\vC = 1+ \mathcal{O}(x_3)$. This further implies
\begin{equation}\label{eq:gradu-grady}
\nabla \vu = [\nabla\vy, (\det \, \I[\vy])^{-1/2} \vnu] + \mathcal{O}(x_3).
\end{equation}
\begin{remark}[comparison with \cite{ozenda2020blend}]\label{rmk:comparison-with-virga}
\lucas{The key difference between our derivation and the one in \cite{ozenda2020blend} is that $\det\I[\vy]=1$, or equivalently $\alpha=1$, was assumed in \cite{ozenda2020blend}. Instead, we impose $\alpha = (\det \, \I[\vy])^{-1/2}$ in \eqref{eq:alpha}}. A physical interpretation is that the LCN film adjusts its thickness to accommodate changes in area within the midplane and maintain 3D incompressibility. Further elaboration on this difference, relating it to the stretching energy, can be found in Section \ref{sec:inextensibility-incompressibility}. Notably, removing the assumption of $\det\I[\vy]=1$ is also more convenient for numerical computations.
\end{remark}

\subsubsection{Stretching energy}
For convenience of presentation, from now on we slightly abuse notation and drop the prime for $\nabla'$ and tilde for $\wt\vm$. Therefore, we will denote
\begin{equation}\label{eq:m-tildem}
  \vm = \wt\vm\in\R^2,
  \quad
  \vm = (\wt\vm,0)^T \in \R^3
\end{equation}
depending on whether we regard $\vm$ as a vector in $\R^2$ or $\R^3$.

We conclude that, with the steps built in Sections \ref{sec:asymptotic-1}-\ref{sec:asymptotic-4}, we finally derive the stretching energy in \eqref{eq:stretching-energy-density}, namely
\begin{equation}\label{eq:stretching-energy-density-2}
W_{str}(\vx',\nabla\vy) := \lambda \left[\frac{1}{J[\vy]}+\frac{1}{s+1}\left(\text{tr} \, \I[\vy] +  s_0 C_\vm[\vy] + s \frac{J[\vy]}{C_\vm[\vy]}\right)\right]  - 3,
\end{equation}

\subsection{Global minimizers and target metrics.}\label{sec:target-metric}
In this section, we characterize global minimizers of \eqref{eq:stretching-energy}.
We show that minimizing the stretching energy density $W_{str}$ is equivalent to satisfying the target metric constraint pointwise. We point to \cite[Appendix A]{plucinsky2018actuation} for a similar result but for a more complicated 3 dimensional model. The authors in \cite{plucinsky2018actuation} also show that this metric condition arises from a compactness result in the bending scaling regime \cite[Theorem 1.13]{plucinsky2018actuation}.

We first note that the algebra in Section \ref{sec:asymptotics} can be used to show the following algebraic relation between $W_{str}$ and $W_{3D}$. 
\begin{lemma}[stretching vs.\ 3D energy]\label{lem:minimal-energy-extension}
Let $\vx'\in \Omega$ and let $\vF = [\vF_1, \vF_2] \in \mathbb{R}^{3\times2}$ be such that $\text{rank} \, (\vF) = 2$. Then the following algebraic relation holds:
\begin{equation}\label{eq:minimal-energy-extension}
W_{str}(\vx',\vF) = W_{3D}((\vx',0),[\vF,\vb(\vF)]) \, , \quad  \vb(\vF) = \frac{\vF_1\times\vF_2}{|\vF_1\times\vF_2|^2}.
\end{equation}
\end{lemma}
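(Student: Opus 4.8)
The plan is to verify \eqref{eq:minimal-energy-extension} by a direct computation, exploiting the block structure that $\vb(\vF)$ induces in the $3\times 3$ right Cauchy--Green tensor. First I would set $\widehat\vF := [\vF, \vb(\vF)] \in \R^{3\times3}$ and compute $\widehat\vF^T\widehat\vF$. The crucial observations are that $\vb(\vF)$ is orthogonal to both $\vF_1$ and $\vF_2$ (being a scalar multiple of $\vF_1\times\vF_2$), so the off-diagonal blocks vanish and $\widehat\vF^T\widehat\vF = \mathrm{diag}(\I(\vF), |\vb(\vF)|^2)$ with $|\vb(\vF)|^2 = |\vF_1\times\vF_2|^{-2}$; and that $|\vF_1\times\vF_2|^2 = \det(\vF^T\vF) = J(\vF)$, the standard Cauchy--Binet/Lagrange identity, which uses $\mathrm{rank}(\vF)=2$ to ensure $J(\vF)>0$ so that $\vb(\vF)$ is well defined. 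Hence $\widehat\vF^T\widehat\vF = \mathrm{diag}(\I(\vF), J(\vF)^{-1})$, and in particular $\det\widehat\vF^T\widehat\vF = 1$, i.e.\ $\det\widehat\vF = \pm 1$; a short argument with the sign (or simply noting $W_{3D}$ depends only on $\widehat\vF^T\widehat\vF$ through the trace formula \eqref{eq:3D-energy-intermediate}) shows the incompressibility constraint is met in the relevant sense.

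Next I would substitute $\vC = \widehat\vF^T\widehat\vF = \mathrm{diag}(\I(\vF), J(\vF)^{-1})$ into the rewritten $3D$ energy \eqref{eq:3D-energy-intermediate}, which expresses $W_{3D}$ purely in terms of $\vC$, $\vm$, $s$, $s_0$ and $\lambda$ at the point $(\vx',0)$. Since $\vm = (\widetilde\vm, 0)^T$ is planar, the quantities $\vm\cdot\vC\vm$ and $\vm\cdot\vC^2\vm$ only see the upper-left block, so $\vm\cdot\vC\vm = C_\vm(\vF)$ and $\vm\cdot\vC^2\vm = \widetilde\vm\cdot\I(\vF)^2\widetilde\vm$, while $\tr\vC = \tr\I(\vF) + J(\vF)^{-1}$. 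Applying the Cayley--Hamilton identity $\I(\vF)^2 = (\tr\I(\vF))\I(\vF) - J(\vF)\Id_2$ exactly as in Section~\ref{sec:asymptotics} collapses the $\vm\cdot\vC^2\vm/\vm\cdot\vC\vm$ term into $(\tr\I(\vF)) - J(\vF)/C_\vm(\vF)$, and after regrouping the $s_0$ and $s$ contributions one recovers precisely the bracketed expression in \eqref{eq:stretching-energy-density-2}. This is essentially the same algebra already carried out in the asymptotic derivation, now run in reverse and at a fixed $\vF$ rather than asymptotically in $x_3$.

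The main obstacle, though a mild one, is bookkeeping rather than conceptual: one must be careful that the formula \eqref{eq:3D-energy-intermediate} for $W_{3D}$ was derived under the kinematic constraint $\vn = \vF\vm/|\vF\vm|$ and requires $\vF\vm \neq 0$, which holds here because $\mathrm{rank}(\vF)=2$ forces $C_\vm(\vF) = |\vF\widetilde\vm|^2 > 0$ (note $\widetilde\vm \neq 0$ as a unit vector); one should flag this so the division by $C_\vm(\vF)$ and the trace-formula manipulation are legitimate. A secondary point to state cleanly is the identity $|\vF_1\times\vF_2|^2 = \det(\vF^T\vF)$, which can be cited as Lagrange's identity or proved in one line. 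Everything else is routine substitution, so I would present the proof as: (i) compute the block-diagonal form of $\widehat\vF^T\widehat\vF$ and the determinant; (ii) plug into \eqref{eq:3D-energy-intermediate}; (iii) invoke Cayley--Hamilton and regroup to match \eqref{eq:stretching-energy-density-2}.
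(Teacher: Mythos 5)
Your proposal is essentially the paper's proof: form $\vC = [\vF,\vb]^T[\vF,\vb]$, observe the block-diagonal structure with bottom-right entry $J(\vF)^{-1}$, substitute into the trace formula \eqref{eq:3D-energy-intermediate} using planarity of $\vm$, then apply Cayley--Hamilton to collapse the $\vm\cdot\I(\vF)^2\vm$ term. The only addition is your explicit flagging of $J(\vF)>0$ and $C_\vm(\vF)>0$, which the paper leaves implicit under the rank-$2$ hypothesis.
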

\begin{proof}
Let $\vb = \vb[\vF]$ and notice that $\vb^T\vF = 0$ and $|\vb|^2 = \frac{1}{|\vF_1\times\vF_2|^2} = \frac{1}{J(\vF)}$. Therefore, $\vC := [\vF,\vb]^T[\vF,\vb]$ satisfies
\[
\vC = \begin{bmatrix} \I(\vF) & 0 \\ 0 & J(\vF)^{-1}\end{bmatrix}.
\]
Inserting $\vC$ into the right hand side of \eqref{eq:3D-energy-intermediate} and using that $\vm$ is planar yields
\begin{equation*}
W_{3D}\big(\vx,[\vF,\vb]\big) =\lambda\left(J(\vF)^{-1}+ \tr \, \I(\vF)+\frac{s_0}{s+1}C_\vm(\vF)-\frac{s}{s+1}\frac{\vm\cdot \I(\vF)^2\vm}{C_\vm(\vF)}\right)-3,
\end{equation*}
for $\vx:=(\vx',0)$. We next apply the Cayley-Hamilton Theorem to get $\I(\vF)^2 = \big(\text{tr} \, \I(\vF)\big) \, \I[\vy] - \text{det}\, \I(\vF) \, \Id_2$ and 
\begin{align*}
W_{3D}\big(\vx,[\vF, \vb]\big) &=\lambda\left[J(\vF)^{-1}+\frac{1}{s+1}\left( \tr \, \I(\vF)+s_0C_\vm(\vF)-s\frac{J(\vF)}{C_\vm(\vF)}\right)\right]-3,
\end{align*}
which is the desired expression of $W_{str}(\vx',\vF)$ according to \eqref{eq:stretching-energy-density-2}.
\end{proof}

In addition to \eqref{eq:minimal-energy-extension}, $\vb(\vF)$ satisfies the minimality property
\begin{equation}\label{eq:optimal-choice}
W_{3D}\big((\vx',0),[\vF,\vb(\vF)]\big) = \min_{\vc\in\R^3: \det [\vF,\vc]=1} W_{3D}\big((\vx',0),[\vF,\vc]\big);
\end{equation}
we refer to \cite[Proposition 1]{bouck2022reduced} for a proof.
This algebraic relation is the main tool in the derivation of the membrane model with higher order energies in \cite{cirak2014computational} and is used in the derivation of a different membrane model in \cite[Lemma 5.3]{cesana2015effective}.  For us, the algebraic relation \eqref{eq:minimal-energy-extension} shows that $W_{str}$ is nondegenerate, and that deformations with zero stretching energy satisfy a target metric condition. We discuss this below.

The lower bound of the next proposition is an easy consequence of 
Lemma \ref{lem:minimal-energy-extension} (stretching vs.\ 3D energy)
and \eqref{eq:nondegeneracy}, whereas the upper bound is proved in \cite{bouck2022NA}.
\begin{prop}[nondegeneracy]\label{cor:stretching-nondegeneracy}
The stretching energy density $W_{str}$ satisfies
\begin{equation*}
 \mathrm{dist}\big(\vL_{\vn}^{-1/2}[\vF, \vb]\vL_{\vm}^{1/2},SO(3)\big)^2\leq W_{str}\big(\vx',\vF\big)\leq 3\,\mathrm{dist}\big(\vL_{\vn}^{-1/2}[\vF, \vb]\vL_{\vm}^{1/2},SO(3)\big)^2
\end{equation*}
for all $\vF\in \mathbb{R}^{3\times2}$ such that $\mathrm{rank} \, (\vF)=2$ and $\vb = \frac{\vF_1\times\vF_2}{|\vF_1\times\vF_2|^2}$.
\end{prop}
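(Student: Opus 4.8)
The plan is to reduce the statement to the non-degeneracy bound \eqref{eq:nondegeneracy} for $W_{3D}$ via the algebraic identity of Lemma \ref{lem:minimal-energy-extension}. First I would set $\vC := [\vF,\vb]^T[\vF,\vb]$ and note, exactly as in the proof of Lemma \ref{lem:minimal-energy-extension}, that $\vb^T\vF = 0$ and $|\vb|^2 = J(\vF)^{-1}$, so that $\det[\vF,\vb]^2 = \det \vC = J(\vF)\cdot J(\vF)^{-1} = 1$; since $\vb = (\vF_1\times\vF_2)/|\vF_1\times\vF_2|^2$ points in the direction $\vF_1\times\vF_2$, the orientation is positive and hence $\det[\vF,\vb] = 1$. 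This is the key observation: although $\vF$ itself is a $3\times 2$ matrix, the completion $[\vF,\vb(\vF)]$ is a genuine element of the incompressible set $\{\vG\in\R^{3\times3} : \det \vG = 1\}$, so \eqref{eq:nondegeneracy} applies to it verbatim.

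With that in hand, the lower bound is immediate: by Lemma \ref{lem:minimal-energy-extension}, $W_{str}(\vx',\vF) = W_{3D}((\vx',0),[\vF,\vb(\vF)])$, and by \eqref{eq:nondegeneracy} the latter is bounded below by $\mathrm{dist}(\vL_{\vn}^{-1/2}[\vF,\vb]\vL_{\vm}^{1/2}, SO(3))^2$ — here $\vL_{\vn},\vL_{\vm}$ are evaluated at $(\vx',0)$, and by the planarity assumption \eqref{eq:m-in-plane} these agree with their $2D$ counterparts, with $\vn = \vF\vm/|\vF\vm|$ well-defined because $\mathrm{rank}\,\vF = 2$ forces $\vF\vm \ne 0$. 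For the upper bound I would invoke \cite{bouck2022NA} as the excerpt already indicates; but to sketch the idea, one writes $W_{3D}((\vx',0),\vG) = |\vL_{\vn}^{-1/2}\vG\vL_{\vm}^{1/2}|^2 - 3 = |\vA|^2 - 3$ with $\vA := \vL_{\vn}^{-1/2}[\vF,\vb]\vL_{\vm}^{1/2}$ and $\det\vA = \det(\vL_{\vn}^{-1/2})\det[\vF,\vb]\det(\vL_{\vm}^{1/2})$; when the step-length tensors have matching normalizations (as in \eqref{eq:def-Lm}--\eqref{eq:def-Ln}) one can arrange $\det\vA = 1$, and then for any $\vA$ with $\det \vA = 1$ one has the elementary polar-decomposition estimate $|\vA|^2 - 3 \le 3\,\mathrm{dist}(\vA,SO(3))^2$, obtained by comparing singular values: writing $\vA = \vR\vU$ with $\vR\in SO(3)$ and $\vU$ SPD with eigenvalues $\sigma_i>0$, $\prod\sigma_i = 1$, we need $\sum(\sigma_i^2-1) \le 3\sum(\sigma_i-1)^2$, which follows from $\sigma_i^2 - 1 = (\sigma_i-1)(\sigma_i+1)$ together with the constraint $\prod \sigma_i = 1$ that prevents all $\sigma_i$ from being simultaneously large.

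The only genuine subtlety — and the step I would flag as the main obstacle — is the bookkeeping of the determinant of $\vA = \vL_{\vn}^{-1/2}[\vF,\vb]\vL_{\vm}^{1/2}$: the clean inequality $|\vA|^2 - 3 \le 3\,\mathrm{dist}(\vA,SO(3))^2$ requires $\det\vA = 1$, which in turn needs $\det\vL_{\vn}^{-1/2}\cdot\det\vL_{\vm}^{1/2} = 1$, i.e. $\det\vL_{\vm} = \det\vL_{\vn}$. From \eqref{eq:def-Lm}--\eqref{eq:def-Ln} one computes $\det\vL_{\vm} = (s_0+1)^{-1}\cdot(1+s_0) = 1$ and likewise $\det\vL_{\vn} = 1$, so indeed $\det\vA = \det[\vF,\vb] = 1$ as required. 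The lower bound needs no such care — it is a pure corollary of Lemma \ref{lem:minimal-energy-extension} and \eqref{eq:nondegeneracy} — so in the body of the paper I would give the lower bound in two lines and simply cite \cite{bouck2022NA} for the upper bound, noting that it rests on the singular-value inequality above applied to the unimodular matrix $\vA$.
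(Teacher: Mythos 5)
Your proposal is correct and takes essentially the same approach as the paper: the lower bound is read off directly from Lemma~\ref{lem:minimal-energy-extension} together with the $3D$ nondegeneracy \eqref{eq:nondegeneracy}, and the upper bound is deferred to the companion paper \cite{bouck2022NA}, which is exactly what the paper does. Your determinant bookkeeping (that $\det[\vF,\vb]=1$ and $\det\vL_{\vm}=\det\vL_{\vn}=1$, so $\vA=\vL_{\vn}^{-1/2}[\vF,\vb]\vL_{\vm}^{1/2}$ is unimodular) is a correct and useful clarification; the closing step of your sketch for the singular-value inequality $\sum(\sigma_i^2-1)\le 3\sum(\sigma_i-1)^2$ under $\prod\sigma_i=1$ is not a proof as written, but since you explicitly defer the upper bound to \cite{bouck2022NA} that does not affect the validity of your argument.
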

The following result establishes that $W_{str}(\vx',\vF) = 0$ if and only if $\vF$ satisfies a target metric condition. This well-known fact is extensively documented in the physics literature, where it is commonly utilized to predict the shapes of LCNs/LCEs during actuation \cite{warner2018nematic, aharoni2014geometry, modes2011blueprinting, plucinsky2018patterning, plucinsky2016programming, warner2020topographic}. We next present a concise and novel proof of it.

\begin{prop}[target metric]\label{prop:target-metric} 
The stretching energy density $W_{str}(\vx',\vF) = 0$ vanishes at $\vF\in\R^{3\times2}$ if and only if $\I(\vF) = g$
where $g\in\mathbb{R}^{2\times2}$ is given by
\begin{equation}\label{eq:target-metric}
g = \lambda^2 {\vm}\otimes{\vm} +\lambda^{-1}{\vm}_\perp\otimes{\vm}_\perp,
\end{equation}
$\lambda$ is defined in \eqref{eq:lambda} and $\vm_\perp:\Omega\to\mathbb{S}^1$ is perpendicular to $\vm$.
\end{prop}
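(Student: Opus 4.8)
The plan is to reduce everything to the lower bound in Proposition \ref{cor:stretching-nondegeneracy} (nondegeneracy) and then identify exactly which $\vF$ make the distance term vanish. First I would record that, by that proposition, $W_{str}(\vx',\vF) = 0$ if and only if $\vL_{\vn}^{-1/2}[\vF,\vb]\vL_{\vm}^{1/2} \in SO(3)$, where $\vb = \vb(\vF) = (\vF_1\times\vF_2)/|\vF_1\times\vF_2|^2$. So the whole statement becomes: $\vL_{\vn}^{-1/2}[\vF,\vb]\vL_{\vm}^{1/2}$ is a rotation if and only if $\I(\vF) = g$. The natural way to handle a ``$\vR \in SO(3)$'' condition is to square it: $\vR \in SO(3)$ iff $\vR^T\vR = \Id_3$ and $\det \vR > 0$. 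The determinant sign is automatic here since $\det[\vF,\vb] = |\vF_1\times\vF_2|^{-2}\,|\vF_1\times\vF_2|^2\cdot(\text{orientation}) > 0$ and $\vL_{\vn},\vL_{\vm}$ are SPD, so the real content is the equation $\vL_{\vm}^{1/2}[\vF,\vb]^T\vL_{\vn}^{-1}[\vF,\vb]\vL_{\vm}^{1/2} = \Id_3$, i.e. $[\vF,\vb]^T\vL_{\vn}^{-1}[\vF,\vb] = \vL_{\vm}^{-1}$.

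Next I would compute both sides blockwise. On the left, using $\vb^T\vF = 0$ and $|\vb|^2 = J(\vF)^{-1}$ (exactly as in the proof of Lemma \ref{lem:minimal-energy-extension}), I need $\vL_{\vn}^{-1}$ applied to the columns. Here the key observation is that the kinematic constraint \eqref{eq:kinematic-cons} makes $\vn$ parallel to $\vF\vm$, hence $\vn$ lies in the column space of $\vF$, hence $\vn \perp \vb$; so $\vL_{\vn}^{-1}\vb = (s+1)^{1/3}\vb$ and the mixed block of $[\vF,\vb]^T\vL_{\vn}^{-1}[\vF,\vb]$ is $(s+1)^{1/3}\vF^T\vb = 0$. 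On the right, $\vL_{\vm}^{-1} = (s_0+1)^{1/3}(\Id_3 - \frac{s_0}{s_0+1}\vm\otimes\vm)$ with $\vm = (\wt\vm,0)^T$, so its $(3,3)$ entry is $(s_0+1)^{1/3}$ and its mixed block vanishes. Matching the $(3,3)$ entries gives $(s+1)^{1/3}|\vb|^2 = (s+1)^{1/3} J(\vF)^{-1} = (s_0+1)^{1/3}$, i.e. $J(\vF) = \lambda^3$; note $\det g = \lambda^2\cdot\lambda^{-1} = \lambda^3$, so this is consistent with $\I(\vF)=g$. Matching the mixed block is automatic on both sides. The real $2\times 2$ equation to analyze is the top-left block: $\vF^T\vL_{\vn}^{-1}\vF = \vL_{\vm}^{-1}\big|_{2\times 2} = (s_0+1)^{1/3}\big(\Id_2 - \frac{s_0}{s_0+1}\wt\vm\otimes\wt\vm\big)$.

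The slightly delicate step — and the one I expect to be the main obstacle — is converting the left block $\vF^T\vL_{\vn}^{-1}\vF$, which still mentions $\vn$, into a statement purely about $\I(\vF)$, and then inverting the resulting relation to read off $\I(\vF) = g$. Here I would use $\vL_{\vn}^{-1} = (s+1)^{1/3}(\Id_3 - \frac{s}{s+1}\vn\otimes\vn)$ and $\vn\otimes\vn = \vF\vm\otimes\vF\vm / |\vF\vm|^2 = \vF(\vm\otimes\vm)\vF^T/C_\vm(\vF)$, so that
\begin{equation*}
\vF^T\vL_{\vn}^{-1}\vF = (s+1)^{1/3}\Big(\I(\vF) - \frac{s}{s+1}\frac{\I(\vF)(\vm\otimes\vm)\I(\vF)}{C_\vm(\vF)}\Big),
\end{equation*}
which is genuinely an identity in $\I(\vF)$ alone. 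Setting $A := \I(\vF)$ (a $2\times 2$ SPD matrix), the vanishing condition becomes $A - \frac{s}{s+1}\frac{A\vm\vm^TA}{\vm^TA\vm} = \lambda^{-1}(\Id_2 - \frac{s_0}{s_0+1}\vm\vm^T)$ together with $\det A = \lambda^3$. For the ``if'' direction one just plugs $A = g$ in and checks using $g\vm = \lambda^2\vm$, $g\vm_\perp = \lambda^{-1}\vm_\perp$, $C_\vm(g) = \lambda^2$; everything collapses by direct computation. For the ``only if'' direction, I would test the matrix equation against the basis $\{\vm,\vm_\perp\}$: pairing with $\vm$ on both sides (using $\vm^TA\vm\cdot\frac{s}{s+1}\cdot\frac{\vm^TA\vm}{\vm^TA\vm}$ telescoping) yields $\vm^TA\vm\cdot\frac{1}{s+1} = \lambda^{-1}\cdot\frac{1}{s_0+1}$, hence $C_\vm(A) = \lambda^{-1}\frac{s+1}{s_0+1} = \lambda^{-1}\lambda^3 = \lambda^2$; pairing $\vm$ with $\vm_\perp$ gives $\vm^TA\vm_\perp(1 - \frac{s}{s+1}) = 0$, so $\vm^TA\vm_\perp = 0$, i.e. $\vm_\perp$ is an eigenvector of $A$; and then $\det A = C_\vm(A)\cdot(\vm_\perp^TA\vm_\perp) = \lambda^3$ forces $\vm_\perp^TA\vm_\perp = \lambda^{-1} = C_{\vm_\perp}(g)$. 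Since $A$ is symmetric with the computed action on the orthonormal basis $\{\vm,\vm_\perp\}$, we get $A = \lambda^2\vm\otimes\vm + \lambda^{-1}\vm_\perp\otimes\vm_\perp = g$. I would double-check the one subtle point, namely that $A = \I(\vF)$ for rank-$2$ $\vF$ is automatically SPD (so $\vm^TA\vm = C_\vm(\vF) > 0$ and no division-by-zero occurs), which is immediate, and that the orientation/determinant bookkeeping linking $J(\vF) = \lambda^3$ to $\det g$ is consistent, which it is.
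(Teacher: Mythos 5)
Your argument follows essentially the same route as the paper: invoke the nondegeneracy proposition to reduce $W_{str}=0$ to $\vL_\vn^{-1/2}[\vF,\vb]\vL_\vm^{1/2}\in SO(3)$, square to get $[\vF,\vb]^T\vL_\vn^{-1}[\vF,\vb]=\vL_\vm^{-1}$, compare the upper $2\times2$ blocks, and test the resulting $2\times2$ identity against $\vm$ and $\vm_\perp$. The only methodological difference is how you pin down the $\vm_\perp$-eigenvalue: the paper tests the upper block against $\vm_\perp$ directly, while you combine the $(3,3)$ block (which fixes $\det\I(\vF)$) with $C_\vm(\I(\vF))=\lambda^2$. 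Either way closes the argument. Your ``if'' direction also differs slightly (you plug $A=g$ into the squared equation; the paper plugs $\I[\vy]=g$ into \eqref{eq:stretching-energy-density-2} directly), but both are fine.

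One arithmetic slip needs fixing. From $(s+1)^{1/3}J(\vF)^{-1}=(s_0+1)^{1/3}$ you get $J(\vF)=\bigl(\tfrac{s+1}{s_0+1}\bigr)^{1/3}=\lambda$, not $\lambda^3$; and likewise $\det g=\lambda^2\cdot\lambda^{-1}=\lambda$, not $\lambda^3$. As written, your own chain is also internally inconsistent: $\det A=\lambda^3$ together with $C_\vm(A)=\lambda^2$ would give $C_{\vm_\perp}(A)=\lambda$, not the $\lambda^{-1}$ you assert. With the corrected value $\det A=\lambda$, the division gives $C_{\vm_\perp}(A)=\lambda/\lambda^2=\lambda^{-1}$ and everything lines up with $g$. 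So the logic is sound; just repair the exponents.
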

\begin{proof}
We first note that due to Proposition \ref{cor:stretching-nondegeneracy}, $W_{str}(\vx',\vF)=0$ if and only if
$\vR:=\vL_\vn^{-1/2}[\vF,\, \vb]\vL_\vm^{1/2}~\in~ SO(3)$, where $\vb = \frac{\vF_1\times\vF_2}{|\vF_1\times\vF_2|^2}$.
Then
\[
\vR^T\vR=\Id_3 \quad\Rightarrow\quad
[\vF,\, \vb]^T\vL_\vn^{-1}[\vF,\, \vb] = \vL_\vm^{-1}.
\]
To show the equality $\I(\vF) = g$, we equate the upper $2\times2$ blocks of the preceding matrix equality and use that $\vn = \vF\vm / |\vF\vm|$, according to \eqref{eq:kinematic-cons}, to obtain
\[
(s+1)^{\frac13}\left(\I(\vF) - \frac{s}{s+1}\frac{\I(\vF)\vm \otimes (\I(\vF)\vm)}{\vm\cdot \I(\vF)\vm}\right) = \wt{\vL_\vm^{-1}},
\]
where $\wt{\vL_\vm^{-1}}$ denotes the upper $2\times2$ block of $\vL_\vm^{-1}=(s_0+1)^{\frac13} \big(\Id_3-\frac{s_0}{s_0+1} \vm\otimes\vm\big)$.
Multiplying both sides by $\vm$ shows that $\vm$ is an eigenvector of $\I(\vF)$ with corresponding eigenvalue $\lambda^2$. Since $\I(\vF)$ is symmetric, then $\vm_\perp$ is an eigenvector, and we can similarly show that the corresponding eigenvalue for $\vm_\perp$ is $\lambda^{-1}$, whence $\I(\vF) = g$.
Conversely, if $\I(\vF) = g$ we resort to \eqref{eq:stretching-energy-density-2} to write
\[
W_{str} (\vx',\vF)= \lambda \left( \frac{2}{\lambda} + \frac{s_0+1}{s+1} \lambda^2 \right) - 3 = 0,
\]
in light of \eqref{eq:lambda}. This concludes the proof.
\end{proof}

A direct consequence of the characterization of the target metric is that $H^1$ isometric immersions of $g$ are minimizers to the stretching energy.
\begin{cor}[immersions of $g$ are minimizers with vanishing energy]\label{cor:minimizers}
A deformation $\vy\in H^1(\Omega;\mathbb{R}^3)$ satisfies 
\begin{equation}\label{eq:metric-cons}
\I[\vy]=g\text{ a.e. in }\Omega, 
\end{equation}
i.e., $\vy$ is an \emph{isometric immersion} of the metric $g$ defined in \eqref{eq:target-metric}, if and only if $\vy$ is a global minimizer to \eqref{eq:stretching-energy} with $E_{str}[\vy]=0$. 
\end{cor}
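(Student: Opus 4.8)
\textbf{Proof proposal for Corollary \ref{cor:minimizers}.}

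The plan is to deduce the corollary directly from the pointwise characterization in Proposition \ref{prop:target-metric} (target metric) together with the nonnegativity of the stretching energy density established in Proposition \ref{cor:stretching-nondegeneracy} (nondegeneracy). The key observation is that $E_{str}$ is an integral of a nonnegative integrand, so its infimum over $H^1(\Omega;\mathbb{R}^3)$ is at least $0$, and this value is attained precisely when the integrand vanishes almost everywhere.

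First I would record that $W_{str}(\vx',\vF)\ge 0$ for every $\vx'\in\Omega$ and every $\vF\in\R^{3\times2}$ with $\mathrm{rank}\,(\vF)=2$, which is immediate from the lower bound in Proposition \ref{cor:stretching-nochetto}... (use Proposition \ref{cor:stretching-nondegeneracy}), since the distance to $SO(3)$ is nonnegative; one must also note that when $\mathrm{rank}\,(\vF)<2$, i.e.\ $J(\vF)=\det\I(\vF)=0$, the term $1/J[\vy]$ in \eqref{eq:stretching-energy-density-2} blows up to $+\infty$, so $W_{str}\ge 0$ holds in all cases (with the convention that $W_{str}=+\infty$ on rank-deficient gradients). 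Consequently $E_{str}[\vy]\ge 0$ for all $\vy\in H^1(\Omega;\mathbb{R}^3)$, and therefore any $\vy$ with $E_{str}[\vy]=0$ is automatically a global minimizer.

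Next, for the forward implication, suppose $\vy\in H^1(\Omega;\mathbb{R}^3)$ satisfies $\I[\vy]=g$ a.e.\ in $\Omega$. Then $\nabla\vy(\vx')$ has rank $2$ for a.e.\ $\vx'$ (since $\det g = \lambda^2\cdot\lambda^{-1}=\lambda>0$), so Proposition \ref{prop:target-metric} applies pointwise and gives $W_{str}(\vx',\nabla\vy(\vx'))=0$ for a.e.\ $\vx'$; integrating yields $E_{str}[\vy]=0$, and by the previous paragraph $\vy$ is a global minimizer. For the converse, suppose $\vy$ is a global minimizer of \eqref{eq:stretching-energy}. Comparing with any fixed competitor that achieves zero energy — for instance the linear map $\vy_0(\vx') := \sqrt{g}\,\vx'$ viewed in $\R^2\subset\R^3$, which satisfies $\I[\vy_0]=g$ and hence $E_{str}[\vy_0]=0$ by the forward implication — we conclude $E_{str}[\vy]\le E_{str}[\vy_0]=0$, and combined with $E_{str}[\vy]\ge 0$ this forces $E_{str}[\vy]=0$. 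Since $W_{str}\ge 0$ pointwise, the vanishing of the integral implies $W_{str}(\vx',\nabla\vy(\vx'))=0$ for a.e.\ $\vx'$; Proposition \ref{prop:target-metric} then gives $\I[\vy]=g$ a.e.

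The only mild subtlety — and the step I would be most careful about — is the existence of at least one zero-energy competitor so that the "global minimizer $\Rightarrow$ zero energy" direction is not vacuous: this is precisely why exhibiting the explicit affine immersion $\vy_0(\vx')=\sqrt{g}\,\vx'$ matters, as it shows $\inf_{\vy\in H^1}E_{str}[\vy]=0$ is attained in $H^1(\Omega;\mathbb{R}^3)$. Everything else is a routine "nonnegative integrand has zero integral iff it vanishes a.e." argument combined with the already-established pointwise algebraic equivalence, so I do not anticipate any real obstacle beyond keeping track of the rank/measurability bookkeeping for $\nabla\vy$.
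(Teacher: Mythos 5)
Your overall strategy is the intended one: use the lower bound from Proposition \ref{cor:stretching-nondegeneracy} to get $W_{str}\ge 0$ pointwise, then apply the pointwise equivalence from Proposition \ref{prop:target-metric}. Your forward direction ("$\I[\vy]=g$ a.e.\ $\Rightarrow$ global minimizer with zero energy") is correct, and the integration/"nonnegative integrand" logic is exactly right.

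However, your converse direction contains a step that is both unnecessary and, as written, incorrect. You assume only "$\vy$ is a global minimizer" and then try to \emph{derive} $E_{str}[\vy]=0$ by exhibiting a competitor $\vy_0(\vx'):=\sqrt{g}\,\vx'$ with $\I[\vy_0]=g$. But that identity only holds when $g$ is constant: in general $g=g(\vx')$ depends on $\vx'$ through $\vm$ and $\lambda$, so $\nabla\vy_0=\sqrt{g}+(\nabla\sqrt{g}\text{ terms contracted with }\vx')$ and $\I[\vy_0]\neq g$. In fact your $\vy_0$ is a planar map, hence has zero Gaussian curvature, whereas $g$ generally has nonzero curvature (that is the whole point of the defect metrics studied later in the paper), so it could not possibly be an isometric immersion. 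Worse, the existence of \emph{any} zero-energy competitor is precisely what the paper flags as a long-standing open problem immediately after the corollary, and the authors explicitly note that "it is conceivable that $g$ is not immersible and yet there is a global minimizer $\vy$ with $E_{str}[\vy]>0$; this justifies the requirement $E_{str}[\vy]=0$ in Corollary \ref{cor:minimizers}." In other words, the implication "global minimizer $\Rightarrow$ $E_{str}=0$" that you are trying to establish is believed to be false in general.

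The fix is simple: read the statement carefully. The clause "with $E_{str}[\vy]=0$" is part of the \emph{hypothesis} (B), not a conclusion you must reach from "global minimizer." So in the converse direction you are handed $E_{str}[\vy]=0$ for free; combined with $W_{str}\ge 0$ pointwise this gives $W_{str}(\vx',\nabla\vy(\vx'))=0$ a.e., and Proposition \ref{prop:target-metric} yields $\I[\vy]=g$ a.e. No competitor is needed, and deleting that paragraph from your proof makes it correct.
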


Therefore, the solvability of \eqref{eq:stretching-energy} is related to the long standing open problem in differential geometry of existence of isometric immersions in $\mathbb{R}^3$ for a general metric $g:\Omega\to\mathbb{R}^{2\times2}$. Smooth isometric immersions in $\mathbb{R}^3$ are known to exist for certain metrics with positive or negative curvatures, while there are also examples of metrics that have no $C^2$ isometric immersions; we refer to the book \cite{han2006isometric} for discussions and further references. Corollary \ref{cor:minimizers} requires the minimal regularity $\vy\in H^1(\Omega;\mathbb{R}^3)$, but we further assume the existence of an $H^2$ isometric immersion to prove convergence of our FEM with regularization \eqref{eq:regularization} in \cite{bouck2022NA}. The existence of either $H^1$ or $H^2$ isometric immersions seems to be an open question, to the best of our knowledge. Finally, it is conceivable that $g$ is not immersible and yet there is a global minimizer $\vy$ of \eqref{eq:stretching-energy} with $E_{str}[\vy]>0$; this justifies the requirement $E_{str}[\vy]=0$ in Corollary \ref{cor:minimizers}. We explore this matter computationally in Section \ref{sec:incompatible-s}.

\subsection{Inextensibility vs Incompressibility}\label{sec:inextensibility-incompressibility}
This section is dedicated to comparing the choice of $\alpha= (\det \I[\vy])^{-1/2}$ in \eqref{eq:alpha} vs.\ $\alpha=1$ as in \cite{ozenda2020blend}. We complement Remark \ref{rmk:comparison-with-virga} (\lucas{comparison} with \cite{ozenda2020blend}) with a \lucas{pointwise statement about energy densities}. \lucas{We first observe that for any $\vF\in \R^{3\times2}$ that satisfies the metric condition $\I(\vF) = \vF^T\vF = g(\vx')$ at $\vx'\in \Omega$, we have
$$
W_{str}(\vx',\vF) = W_{3D}((\vx',0) ,[\vF, \vb(\vF)]) \textcolor{red}{= \min_{\vc\in\R^3: \det [\vF,\vc]=1} W_{3D}\big((\vx',0),[\vF,\vc]\big)}= 0
$$
according to Lemma \ref{lem:minimal-energy-extension} (stretching vs 3D energy) and Proposition \ref{prop:target-metric} (target metric). Moreover, since $g(\vx')$ is symmetric positive definite, one such matrix exists: let 
$$
 \vF= \begin{pmatrix}\sqrt{g(\vx')} \\ \bz \end{pmatrix}\in \R^{3\times2}
$$
and note that $\I(\vF) = g(\vx')$ and $\det\I(\vF) = \lambda$. The next lemma states that our ansatz yields strictly lower 3D energy density than that of \cite{ozenda2020blend}.
} 
\begin{lemma}[energy \lucas{density} gap]\label{lem:energy-gap}
\lucas{For any $\vx'\in\Omega$ and $\lambda\neq1$, we have
\[
\inf_{\vF\in \mathbb{R}^{3\times2}, \det \normalfont{\textrm{I}}(\vF)=1} W_{3D}\big((\vx',0),[\vF,\, \vb(\vF)]\big) > 0
\]
where $\vb(\vF)$ is defined in \eqref{eq:minimal-energy-extension}. 
}
\end{lemma}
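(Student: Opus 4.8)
The plan is to reduce the infimum over $\vF$ with $\det \I(\vF)=1$ to the scalar question of minimizing $W_{3D}$ over admissible $2\times 2$ first fundamental forms, and then show this scalar minimum is strictly positive when $\lambda\neq 1$. First I would use Lemma \ref{lem:minimal-energy-extension} (stretching vs.\ 3D energy) to rewrite $W_{3D}((\vx',0),[\vF,\vb(\vF)]) = W_{str}(\vx',\vF)$, so that the quantity in question becomes $\inf\{W_{str}(\vx',\vF): \det \I(\vF)=1\}$. Since $W_{str}(\vx',\vF)$ depends on $\vF$ only through $\I(\vF)$, and since $\det\I(\vF)=1$, I would parametrize the admissible first fundamental forms by writing $\I(\vF)$ in the eigenbasis $\{\vm,\vm_\perp\}$: there exist $a,b>0$ with $ab=J[\vy]=\det\I(\vF)$ restricted to the constraint, but more carefully, since $\vm$ need not be an eigenvector of $\I(\vF)$, I would instead keep the invariants $\tr\I(\vF) = \sigma$, $C_\vm(\vF)=c$, $\det\I(\vF)=1$ and express $W_{str} = \lambda[1 + \tfrac{1}{s+1}(\sigma + s_0 c + s/c)] - 3$, valid for $\det\I(\vF)=1$. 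Here $c \in (0,\sigma)$ ranges over an interval determined by $\sigma$ together with positive-definiteness and the constraint $\det = 1$ (namely $\sigma \geq 2$ and $c$ satisfies $c(\sigma - c) \geq 1$).

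Next I would perform the minimization in stages. For fixed $\sigma$ and $c$, the energy is already determined; so I minimize over $(\sigma,c)$ in the admissible region $\{\sigma\geq 2,\ 1 \leq c(\sigma-c)\}$, equivalently $c + 1/c \leq \sigma$ (since for positive-definite symmetric $M$ with $\det M = 1$ and a unit vector $\vm$, $c = \vm\cdot M\vm$ and $1/c \leq \vm \cdot M^{-1}\vm = \sigma - c$ forces $c + 1/c \leq \sigma$, and conversely any such $(\sigma,c)$ is realizable). Because $W_{str}$ is increasing in $\sigma$ (the coefficient $\lambda/(s+1) > 0$), the minimum over $\sigma$ is attained at $\sigma = c + 1/c$, reducing the problem to minimizing over $c>0$ the one-variable function
\begin{equation*}
f(c) = \lambda\left[1 + \frac{1}{s+1}\left(c + \frac1c + s_0 c + \frac{s}{c}\right)\right] - 3 = \lambda\left[1 + \frac{(s_0+1)c + (s+1)c^{-1}}{s+1}\right] - 3.
\end{equation*}
By AM--GM, $(s_0+1)c + (s+1)c^{-1} \geq 2\sqrt{(s_0+1)(s+1)}$ with equality iff $c = \sqrt{(s+1)/(s_0+1)} = \lambda^{3/2}$, giving $f(c) \geq \lambda\big[1 + \tfrac{2\sqrt{(s_0+1)(s+1)}}{s+1}\big] - 3 = \lambda + 2\lambda\sqrt{(s_0+1)/(s+1)} - 3 = \lambda + 2\lambda\cdot\lambda^{-3/2} - 3 = \lambda + 2\lambda^{-1/2} - 3$.

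Finally I would show $h(\lambda) := \lambda + 2\lambda^{-1/2} - 3 > 0$ for all $\lambda > 0$ with $\lambda \neq 1$, and $h(1) = 0$. This is elementary: $h(1)=0$, $h'(\lambda) = 1 - \lambda^{-3/2}$, which is negative for $\lambda<1$ and positive for $\lambda>1$, so $\lambda=1$ is the unique global minimum and $h(\lambda)>0$ otherwise; hence the infimum is $h(\lambda) > 0$ whenever $\lambda \neq 1$. (Note the infimum is in fact attained, at $c = \lambda^{3/2}$, $\sigma = \lambda^{3/2} + \lambda^{-3/2}$, which corresponds exactly to $\I(\vF)$ being diagonal $\mathrm{diag}(\lambda^{3/2},\lambda^{-3/2})$ in the $\{\vm,\vm_\perp\}$ basis with $\det = 1$ — i.e.\ the best one can do under the inextensibility constraint, a "scaled" version of the true target metric $g$.) The main obstacle I anticipate is the careful justification of the admissible range of invariants $(\sigma,c)$ under the constraints $\det\I(\vF)=1$, $\I(\vF)$ symmetric positive definite, and $\mathrm{rank}(\vF)=2$ — in particular verifying that $c + 1/c \leq \sigma$ is both necessary and sufficient, and confirming monotonicity lets us saturate that bound; the rest is routine AM--GM and one-variable calculus.
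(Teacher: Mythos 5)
Your proposal is correct, and after the same initial reduction (via Lemma~\ref{lem:minimal-energy-extension}, rewriting the infimum as $\inf\{W_{str}(\vx',\vF):\det\I(\vF)=1\}$) it reaches the same value $\lambda+2\lambda^{-1/2}-3$, but resolves the constrained minimization differently. The paper writes $\I(\vF)=a\,\vm\otimes\vm+b\,\vm_\perp\otimes\vm_\perp+c'(\vm_\perp\otimes\vm+\vm\otimes\vm_\perp)$ explicitly in the $\{\vm,\vm_\perp\}$ basis (beware that its ``$c$'' is the off-diagonal entry, not your $C_\vm$), expresses the constraint as $ab-c'^2=1$, and invokes a Lagrange multiplier to read off the critical point $c'=0$, $a=b^{-1}=\lambda^{3/2}$. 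You instead work with the rotation-invariants $(\sigma,c)=(\tr\I,C_\vm)$, derive the admissible region $c+1/c\le\sigma$ from Cauchy--Schwarz together with the Cayley--Hamilton identity $\vm\cdot M^{-1}\vm=\sigma-c$ (valid for $\det M=1$), exploit monotonicity of $W_{str}$ in $\sigma$ to saturate the constraint, and finish with AM--GM. Your route is marginally more rigorous: the Lagrange step in the paper only produces a critical point and the paper does not explicitly verify that it is the global minimum (nor that the infimum is attained), whereas AM--GM delivers the lower bound directly together with its minimizer; you also make the realizability of $(\sigma,c)$ explicit. The paper's version is shorter. Both are the same idea once the reduction to $W_{str}$ under $\det\I(\vF)=1$ is in place, and both correctly identify the optimal $\I(\vF)=\mathrm{diag}(\lambda^{3/2},\lambda^{-3/2})$ in the $\{\vm,\vm_\perp\}$ basis.
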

\begin{proof}
\lucas{In view of Lemma \ref{lem:minimal-energy-extension} (stretching vs.\ 3D energy) and Proposition \ref{prop:target-metric} (target metric), we just have to prove}
\[
\inf_{\vF\in \mathbb{R}^{3\times2}, \det \normalfont{\textrm{I}}(\vF)=1}W_{str}(\vx',\vF) > 0.
\]
For any $\vF\in \mathbb{R}^{3\times2}$, we may write $\I(\vF) = a\vm\otimes \vm +b \vm_\perp\otimes \vm_\perp + {c}(\vm_\perp\otimes \vm+ \vm\otimes \vm_\perp)$, and note that $\det\I((\vF) = 1$ is equivalent to $ab - c^2 = 1$. The resulting stretching energy density \eqref{eq:stretching-energy-density-2} written in terms of $\vF$ reads
\[
W_{str}(\vx',\vF) = \lambda\left[1+ \frac{1}{s+1}\left((s_0+1)a +b +s\frac{1}{a}\right)\right] - 3.
\]
We append the constraint $ab-c^2=1$ to $W_{str}(\vx',\vF)$ via a Lagrange multiplier and differentiate the resulting expression to obtain the optimality conditions: $c=0, a=b^{-1}=\big(\frac{s+1}{s_0+1}\big)^{1/2} = \lambda^{3/2}$. This implies
\[
\inf_{\vF\in \mathbb{R}^{3\times2}, \det \normalfont{\textrm{I}}(\vF)=1} W_{str}(\vx',\vF) = 2\lambda^{-1/2} +\lambda - 3,
\]
which is strictly positive if $\lambda\neq1$. 
\end{proof}

\subsection{Bending energy}\label{sec:bending-energy}
Although we are concerned with a membrane model, we now briefly address the bending energy for LCNs in order to provide motivation for the regularization $R_h$ in \eqref{eq:regularization}, later discussed in Section \ref{sec:method}. We refer to \cite[Section 3.4]{bouck:2023thesis} for a complete derivation.

The bending energy, denoted as $E_{bend}[\vy]:= \lucas{\frac{1}{12}}\int_\Omega W_{bend}(\vx',\nabla\vy, D^2\vy) d\vx'$, is the second term in the asymptotic expansion $t^{-1}E_{3D} = E_{str}+t^2E_{bend} +\mathcal{O}(t^4)$. While building upon the work of \cite{ozenda2020blend}, our approach again differs in that it relaxes the inextensibility assumption $\det\I[\vy]=1$. In fact, our new ansatz is given by
\begin{equation}\label{eq:bending-ansatz}
\begin{aligned}
\vu(\vx',x_3) &= \vy(\vx')+\left(\alpha(\vx')x_3+\beta(\vx')x_3^2+\gamma(\vx')x_3^3\right)\vnu(\vx'),
\\
\beta(\vx') &= -H \alpha(\vx')^2, \quad \gamma(\vx') = \frac{\alpha(\vx')^2}{3}(6H^2-K),
\end{aligned}
\end{equation}
where $K = \det(-\II[\vy]\I[\vy]^{-1})$ and $H = \frac12\tr(-\II[\vy]\I[\vy]^{-1})$ represent the \lucas{Gaussian} curvature and mean curvature of the surface $\vy(\Omega)$, respectively, and are written in terms of the {\it second fundamental form} $\II[\vy] = (\partial_{ij}\vy \cdot \vnu)_{ij=1}^2\in\R^{2\times2}$ of $\vy(\Omega)$. It is worth noting that this ansatz satisfies $\det\nabla\vu(\vx',x_3) = 1+\mathcal{O}(x_3^3)$, and that it reduces to that in \cite[Eq.\ (59)]{ozenda2020blend} under the inextensibility assumption $\alpha(\vx') = 1$.

We further assume that $s,s_0$ are constants with respect to $\vx'$. We insert the ansatz \eqref{eq:bending-ansatz} into the expression for $W_{3D}$ and collect terms up to $\mathcal{O}(x_3^2)$. If $\I[\vy]=g$, then the resulting bending energy density is given by
\begin{equation*}\label{eq:bouck-bending-energy}
W_{bend}(\vx',\nabla\vy,D^2\vy) =  \frac{16 H^2}{J[\vy]} +\frac{s}{s+1}\frac{J[\vy]}{C_m[\vy]}
\left(8H \frac{C_{\II}[\vy]}{C_\vm[\vy]}+4\frac{C_{\II}[\vy]^2}{C_\vm[\vy]^2}\right)+C(g(\vx')).
\end{equation*}
Here, $C_{\II}[\vy] := \vm\cdot \II[\vy]\vm$ and $C(g(\vx'))$ depends only on the target metric $g(\vx')$. It should be noted that $J[\vy]$ and $C_\vm[\vy]$ are constants when $\I[\vy]=g$.
Moreover, this bending energy $W_{bend}$ agrees with \cite[Eq.\ (74)]{ozenda2020blend}, under the condition that $\I[\vy]$ satisfies the target metric specified in \cite[Eq.\ (17)]{ozenda2020blend}. If, in addition $s=0$, then the middle term in $W_{bend}$ vanishes and thus $W_{bend}$ simplifies to
\begin{equation}\label{eq:bouck-bending-energy-simple}
W_{bend}(\vx',\nabla\vy,D^2\vy) =  C\big(\tr(g^{-1/2}\II[\vy]g^{-1/2})\big)^2+C(g(\vx')),
\end{equation}
because $H$ reads equivalently $H = \frac{1}{2}\tr(-\I[\vy]^{-1/2}\II[\vy]\I[\vy]^{-1/2})$ and $\I[\vy]=g$. We stress that the constant $C$ depends on $J[\vy]=\det g$ and so only on $\lambda$. The bending energy \eqref{eq:bouck-bending-energy-simple} corresponds to a term in the energy for prestrained plates \cite{efrati2009,lewicka2016} and will be useful for the discussion of Section \ref{sec:method}.

\section{Asymptotic profiles of defects}\label{sec:special-solution}

We now develop a new formal asymptotic method to construct asymptotic profiles for blueprinted director fields $\vm$ with point defects degree greater than 1, which builds on known solutions for defects of degree $1$ and $1/2$. 
Our asymptotic method relies on several key ingredients, including the concept of lifted surface \cite{plucinsky2018actuation}, composition of director fields, and a formal Taylor expansion. Together, these components constitute a novel procedure for deriving asymptotic profiles of solutions when dealing with general point defect of degrees greater than 1.
We are not aware of studies of shapes beyond the Gauss curvature obtained in \cite{modes2011blueprinting} for higher degree defects. Our approximate solutions provide insight on the complicated shapes that can be programmed upon actuation. We reproduce these profiles computationally later in Section \ref{sec:dir_defect}.

\subsection{Lifted surfaces}\label{sec:lifted-surface}
Lifted surfaces for LCNs/LCEs are originally introduced in \cite{plucinsky2018actuation}. We adapt the idea to the reduced model \eqref{eq:stretching-energy} in this subsection. To this end, we consider the following parameterization of lifted surfaces 
\begin{equation}\label{eq:lifted-surface-1}
\vy^{l}(\vx') =  \big(\alpha \vx',\phi(\alpha\vx') \big)^T \quad \forall\vx'\in\Omega, 
\end{equation}
where $\alpha\in\mathbb{R}$ will be determined later. Here, $\phi:\alpha\Omega\to \mathbb{R}$ represents the graph of the lifted surfaces. Our goal is to match the metric $g$ in \eqref{eq:target-metric} with $\I[\vy^l]$, i.e,
\begin{equation}\label{eq:metric-lifted-1}
\I[\vy^l] = g = \lambda^2\vm\otimes\vm+ \lambda^{-1}\vm_{\perp}\otimes\vm_{\perp} = (\lambda^2 - \lambda^{-1})\vm\otimes\vm+\lambda^{-1}\Id_2.
\end{equation}
Since \eqref{eq:lifted-surface-1} yields
\begin{equation}\label{eq:metric-lifted-2}
\I[\vy^l] = \alpha^2\nabla\phi(\alpha\vx')\otimes \nabla\phi(\alpha\vx') +\alpha^2\Id_2,
\end{equation}
\eqref{eq:metric-lifted-1} is valid if $\phi$ satisfies $|\nabla\phi|=\sqrt{\lambda^3-1}$ a.e. in $\Omega$, and $\alpha=\lambda^{-1/2}$, with the properties that $\lambda>1$ and $\lambda$ is constant over $\Omega$. Substituting them into \eqref{eq:lifted-surface-1} gives
\begin{equation}\label{eq:lifted-surface-2}
  \vy^{l}(\vx') =  \Big(\vxl',\phi(\vxl')\Big)^T,
  \quad
  \vxl':=\lambda^{-1/2}\vx'.
\end{equation}
Since this deformation is an isometric immersion of the metric \eqref{eq:target-metric}, it is also an equilibrium configuration provided $\vm(\vx')=\pm (\lambda^3-1)^{-1/2}\nabla\phi(\vxl')$ according to Corollary \ref{cor:minimizers}. We observe that the discussion so far has restricted $\lambda>1$, which means the LCN is being cooled. If $\lambda<1$ and $\phi$ satisfies $\pm\sqrt{\lambda^{-3} - 1}\nabla \phi (\lambda \vx') = \vm_\perp(\vx')$, then a lifted surface of the form 
\begin{equation}\label{eq:lifted-surface-heat}
\vy^{l}(\vx') =  \Big(\lambda \vx',\phi(\lambda \vx')\Big)^T,
\end{equation}
satisfies $\I[\vy^{l}] = g$.  Since a lifted surface may be constructed for $\lambda<1$ in a similar fashion as for $\lambda>1$, we restrict the remaining discussion of this section to $\lambda>1$. However, we note that the computations in Section \ref{sec:dir_defect} typically set $\lambda<1$.

%
\subsection{Surfaces for defects of degree $1$ and $1/2$}\label{sec:metric-1-defects}

To set the stage, we first go over known lifted surfaces that arise from degree 1 and degree 1/2 defects. These solutions will match the metric $g$ exactly, and will help us later in constructing approximate solutions for higher order defects in Sections \ref{sec:metric-two-defects} and \ref{sec:metric-3half-defects}.

A director field $\vm_1$ with a defect of degree $1$ reads
\begin{equation}\label{eq:degree1}
\vm_1(\vx') = \frac{\vx'}{|\vx'|}.
\end{equation}
If $\vR_1$ is a rotation of $\pm\pi/2$, the corresponding exact solution $\vy_1$ for $\vR_1\vm_1$ reads
\begin{equation}\label{eq:cone}
\vy_1(\vx') = \Big(\vxl', \phi_1(\vxl')\Big)^T
\end{equation}
where 
\begin{equation}\label{eq:cone-phi}
\phi_1(\vx') = \sqrt{\lambda^3-1}\;(1 - |\vx'|);
\end{equation}
$\vy_1$ is a {\it cone} with vertex at the origin as long as $\lambda>1$  \cite{modes2011gaussian}. If $\lambda<1$, then the cone solution in \eqref{eq:cone-phi} is no longer well defined. In fact, the director field $\vm_1$ in \eqref{eq:degree1} will produce what is known as an {\it anticone} configuration  \cite{modes2011gaussian}. The solution for a degree 1 defect will be a cone or anti-cone depending on the angle $\alpha_r$ between $\vm_1$ and $\vx'$ as well as $\lambda$ \cite{mostajeran2016encoding}. 
See Fig.~\ref{fig:cone-anticone} for the cone and anti-cone shapes computed by our algorithm.

\begin{figure}[htbp]
\includegraphics[width=.7\textwidth]{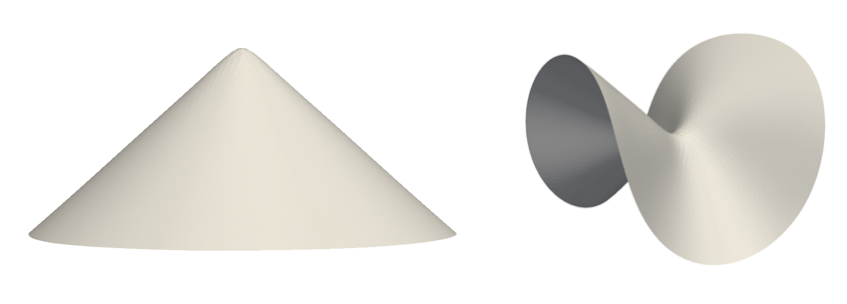}
\caption{Computed solution with the blueprinted director field $\vm_1$ that has degree $1$ defect, $\lambda<1$ and $\alpha_r=0\text{ (right)},\pi/2\text{ (left)}$. We refer to Section \ref{sec:dir_defect} for details of these numerical simulations.}
\label{fig:cone-anticone}
\end{figure}

Next, we introduce a solution induced by a director field with a degree $1/2$ defect,
which will help us construct an approximate solution for a degree $3/2$ defect in Section \ref{sec:metric-3half-defects}. Motivated by \cite{modes2011blueprinting}, we consider the director field
\begin{equation}\label{eq:degree-1half}
\vm_{1/2}(\vx') = 
\begin{cases}
	\text{sign}(x_2)\ve_2, & x_1\geq 0 \\
	\frac{\vx'}{|\vx'|}, & x_1<0 \, ,
\end{cases}
\end{equation}
and note that $\vm_\perp\otimes\vm_\perp$ is the typical line field for a defect of degree $1/2$ at the origin. Since $\vm_{1/2}(\vx') = \vm_1(\vx')$ when $x_1<0$, we expect a cone configuration forming in the left half-plane. For $\lambda>1$, an exact solution is given by the lifted surface configuration
\begin{equation}\label{eq:soln-degree-1half}
\vy_{1/2}(\vx') = \Big(\vxl', \phi_{1/2}(\vxl') \Big)^T,
\end{equation}
where
\begin{equation}\label{eq:lifted-degree-1half}
\phi_{1/2}(\vx') = \begin{cases} 
	\sqrt{\lambda^3-1}\; (1 - |x_2|),& x_1\geq 0 \\
	\sqrt{\lambda^3-1}\; (1 - |\vx'|), & x_1<0
\end{cases}.
\end{equation}
This entails stretching in the direction $\vm_{1/2}$ and shrinking in the perpendicular direction $\vm_{1/2}^\perp$, which in turn explains the shape of the membrane in Fig.~\ref{fig:deg-half} for $x_1>0$.
We see that when $x_1<0$, the map $\vy_{1/2}$ coincides with the cone in \eqref{eq:cone}. We plot $\vm_{1/2}$ (left), $\vy_{1/2}$ (middle) and our computed solution (right) in Fig.\ref{fig:deg-half}.

\begin{figure}[htbp]
\includegraphics[width=0.95\textwidth]{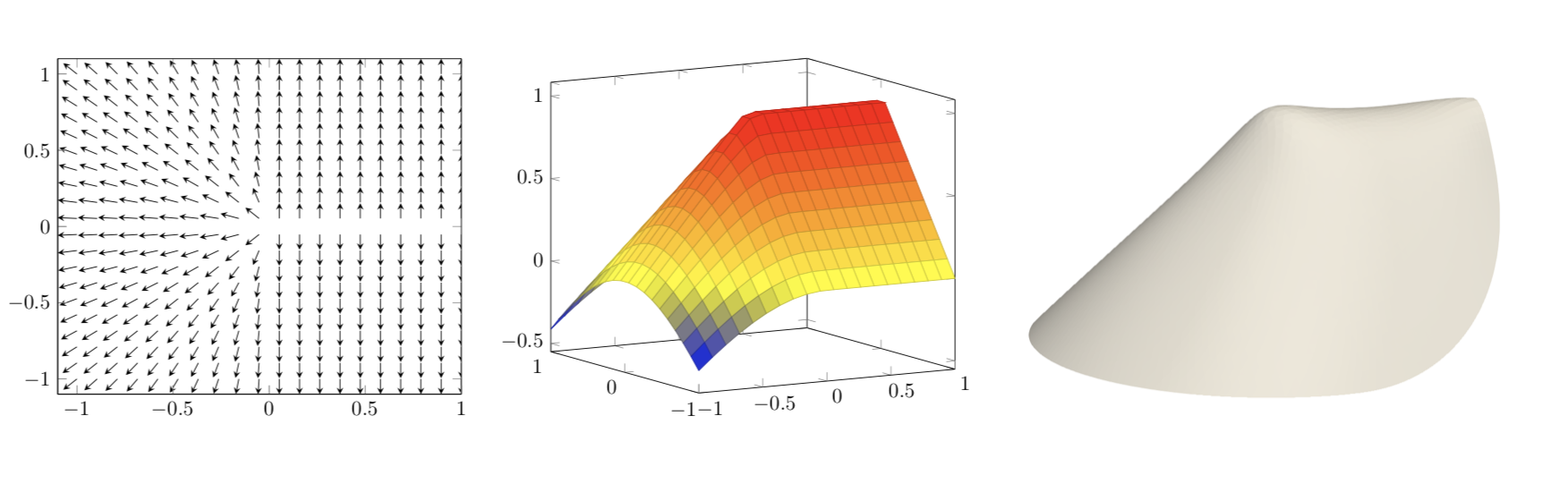}
\caption{Director field $\vm_{1/2}$ from \eqref{eq:degree-1half} (left), lifted surface $\vy_{1/2}$ from \eqref{eq:soln-degree-1half}-\eqref{eq:lifted-degree-1half} for $\lambda =2^{1/3}$ (middle), and computed solution in a unit disc domain with $\vm=\vm_{1/2}$ and a Dirichlet boundary condition that is compatible with \eqref{eq:soln-degree-1half}-\eqref{eq:lifted-degree-1half} (right). Note that the gradient of $\phi_{1/2}$ is parallel to $\vm_{1/2}$ whereas $\vm_{1/2}^\perp$ is the typical director field for a $1/2$ defect.}
\label{fig:deg-half}
\end{figure}

\subsection{Higher degree defects: main idea and idealized construction }\label{sec:higher-degree-idea}
We now consider rotationally symmetric blueprinted director fields $\vm_n$ with defects of integer degree $n>1$. Such director fields are given in polar coordinates by
\begin{equation}\label{eq:m_rot_sym_1}
\vm_{n}(r,\theta) = \big(\cos(n\theta),\sin(n\theta) \big).
\end{equation}
We observe that the line field $\vm_n\otimes\vm_n$ given by $\vm_n$ in \eqref{eq:m_rot_sym_1} exhibits a discontinuity at the origin. We denote by $g_n$ the metric generated by $\vm_{n}$ and arbitrary $\lambda$ via \eqref{eq:target-metric}. Ideally, the goal is to build on the solution \eqref{eq:cone} for $n=1$ and composition of defects to obtain a solution $\vy_n$ with degree $n$ defect. The main idea is as follows.

We exploit the relation to the complex-valued function $f_n(z) = e^{in\arg(z)}$ to write
\[
\vm_n({\bf x}') = p^{-1} \big(f_n(p(\vx'))\big),
\]
where $z=|z| e^{i\arg(z)}$ for any $z\in\mathbb{C}$ and $p:\mathbb{R}^2\to \mathbb{C}$ is the map $p(\vx') = x_1+i x_2$.
From this perspective, we can write a director field with degree $n$ defect as the multiplication or composition of two director fields with degree $1$ and $n-1$ defects
\[
e^{in\text{arg}(z)} = e^{i\text{arg}(z)}e^{i(n-1)\text{arg}(z)}.
\]
If $\vm_1:=(\mu_1,\mu_2)$ and $\vm_{n-1} := (\xi_1,\xi_2)$, then $\mu_1 + i \mu_2 = e^{i\arg(p(\vm_1))}$ and $\xi_1 + i \xi_2 = e^{i\arg(p(\vm_{n-1}))}$ imply
\[
e^{i \arg(p(\vm_n))} = (\mu_1 + i \mu_2) (\xi_1 + i \xi_2) = (\mu_1\xi_1-\mu_2\xi_2) + i (\mu_1\xi_2 + \mu_2\xi_1).
\]
Applying $p^{-1}$ to both sides yields
\begin{equation}\label{eq:recursion}
  \vm_n =  \begin{pmatrix} \mu_1 & -\mu_2 \\ \mu_2 & \mu_1\end{pmatrix}
  \begin{pmatrix} \xi_1 \\ \xi_2\end{pmatrix} = \vR_1\vm_{n-1},
\end{equation}
where $\vR_1:= \big( \vm_{1},\vm_{1}^\perp\big)$ is a rotation matrix that depends on $\vx'$. In view of \eqref{eq:metric-lifted-1} we may write the metric $g_n$ at $\vx'$ as
\begin{align}\label{eq:defect-comp}
g_n = (\lambda^2-\lambda^{-1})\vR_1 \big(\vm_{n-1}\otimes \vm_{n-1}\big)\vR_1^T + \lambda^{-1}\Id_2. 
\end{align}
Assuming $\lambda>1$, we compare \eqref{eq:defect-comp} with the metric that arises from function composition of two defects of degree $1$ and $n-1$. With $\vxl'=\lambda^{-1/2}\vx'$ already defined in \eqref{eq:lifted-surface-2}, we consider the following modified lifted surface
\begin{equation}\label{eq:lifted-surface-deg2}
  \vy_n(\vx') := \Big(\vxl', \phi_n \big(\vv(\vxl')\big)\Big)^T.
\end{equation}
Compared to either \eqref{eq:cone} or \eqref{eq:soln-degree-1half} of Section \ref{sec:lifted-surface}, we now compose $\phi_n$ with an unknown function $\vv:\lambda^{-1/2}\Omega\to\lambda^{-1/2}\Omega$. We then apply the chain rule to determine
\begin{align}\label{eq:func-comp}
&\I[\vy_n(\vx')] = \lambda^{-1}\Id_2 +\lambda^{-1}\nabla \vv(\vxl')^T \big(\nabla\phi_n(\vv(\vxl'))\otimes\nabla\phi_n(\vv(\vxl'))\big)\nabla \vv(\vxl').
\end{align}
To match \eqref{eq:defect-comp} an ideal construction would be to find $\phi_n$ and $\vv$ so that $\nabla\phi_n(\vv(\vxl')) = \sqrt{\lambda^3-1}\;\vm_{n-1}(\vx')$ and $\nabla \vv(\vxl') = \vR_1(\vx')^T$. We will find $\phi_n$ in terms of $\phi_{n-1}$, but before we do so we need to argue with $\vv$. An ideal $\vv$ should have a gradient whose rows are $\vm_1$ and $\vm_1^\perp$. Since $\vm_1$ points radially outward and $\vm_1^\perp$ is tangent to concentric circles, the choice of $\vv$ in polar coordinates should be $\vv(r,\theta) = (v_1(r), v_2(\theta))$ in order for the rows of $\nabla\vv$ to be parallel to $\vm_1$ and $\vm_1^\perp$. One such choice of $\vv$ is
\begin{equation}\label{eq:def-v-polar}
\vv(r,\theta) = \begin{pmatrix} a\log r \\ a \, \theta \end{pmatrix} 
\end{equation}
for $a>0$, whose gradient in Euclidean coordinates is formally
\begin{equation}\label{eq:grad-v}
\nabla \vv(\vx') = \frac{a}{|\vx'|}\vR_1(\vx')^T.
\end{equation}
The choice of $v_1(r) = a\log r$ is so that the scaling of $\frac{1}{r}$ matches the gradient of $v_2(\theta) = a\theta$. Here, $\nabla \vv$ matches $\vR_1(\vx)^T$ up to the scaling $\frac{a}{|\vx'|}$, and we nearly recover the ideal $\vv$. Finding a vector field $\vv$ whose gradient equals a space-dependent rotation $\vR_1(\vx')^T$ is questionable. In fact, in order for $\curl\big(\psi(r)\vm_1^\perp(\vx)\big) = 0$ and potentially have an antiderivative, the only choice of $\psi$ is $\psi(r) = \frac{a}{r}$. Therefore, $\psi(|\vx'|) = \frac{a}{|\vx'|}$ is the only scaling for which one may hope to find an antiderivative of $\psi(|\vx'|)\vR_1(\vx')^T$. The choice of $\phi_n$ is designed to compensate for this scaling. If
\begin{equation}\label{eq:def-phi-n}
  \phi_n(\vx') := \frac{\sqrt{\lambda^3-1}}{n \, a^{n-1}} \, |\vx'|^n
  \quad\Rightarrow\quad
  \nabla\phi_n(\vx') = \frac{|\vx'|}{a} \nabla\phi_{n-1}(\vx'),
\end{equation}
which is consistent with \eqref{eq:cone-phi} for $n=1$. Combining the inductive hypothesis
\[
\nabla\phi_{n-1}\big(\vv(\vxl')\big) = \sqrt{\lambda^3-1} \, \vm_{n-1}(\vx'),
\]
with the recursion relation \eqref{eq:def-phi-n} yields
\begin{align*}
\nabla\big[\phi_n\big(\vv(\vxl')\big)\big] &= \nabla \vv(\vxl')^T \nabla \phi_n\big(\vv(\vxl')\big)
= \frac{|\vv(\vxl')|}{|\vxl'|} \vR_1(\vxl') \nabla\phi_{n-1}\big(\vv(\vxl')\big)
\\
& = \frac{|\vv(\vxl')|}{|\vxl'|} \sqrt{\lambda^3-1} \vR_1(\vx') \vm_{n-1}(\vx')
= \frac{|\vv(\vxl')|}{|\vxl'|} \sqrt{\lambda^3-1} \, \vm_n(\vx').
\end{align*}
This shows that we need $|\vv(\vx')|=|\vx'|$ to close the argument, which may not be possible unless $\vv(\vx')=\vR(\vx')^T\vx'$ with $\vR(\vx')$ a rotation. This in turn would not lead to \eqref{eq:grad-v}. Finally, the cone solution for $n=1$ satisfies $\nabla\phi_1(\vxl') = \pm\sqrt{\lambda^3-1}\;\vm_1(\vx')$, whereas the ideal construction requires $\nabla\phi_1\big(\vv(\vxl')\big) = \pm\sqrt{\lambda^3-1}\;\vm_1(\vx')$. The sign does not matter because $g_1$ is invariant under $\vm_1\mapsto -\vm_1$, but there is a mismatch in the argument of $\nabla\phi_1$ since $\vv(\vxl')$ may not be equal to $\vxl'$ everywhere. We next discuss how to circumvent these obstructions to the idealized construction via approximation. 

\subsection{Formal approximation of idealized construction}\label{sec:formal-approximation}
We now build an approximate deformation $\vy_n$ such that $\I[\vy_n] \approx g_n$. To this end, we modify $\vv$ from \eqref{eq:def-v-polar}, so that $\vv(\vxl') \approx \vxl'$ near the point $\vx^* = (a,0)^T$ for $a>0$; this avoids a singularity at $\bf 0$. To guarantee that $\vv(\vx^*)=\vx^*$ and $\nabla\vv(\vx^*)=\Id_2$, we choose
\begin{equation}\label{eq:def-v}
\vv(\vx') = \begin{pmatrix}\frac{a}{2}\text{log}(x_1^2+x_2^2)+C_a \\ a\,\text{arctan}(x_2/x_1)\end{pmatrix}, 
\end{equation}
where  $C_a = a~ -~ a\log(a).$  Hence, $\vv$ satisfies \eqref{eq:grad-v} and the formal Taylor expansion
\begin{align*}
 \vv(\vx') = \vx^*+ (\vx'-\vx^*)+ \mathcal{O}\big(|\vx'-\vx^*|^2\big) = \vx'+\mathcal{O}\big(|\vx'-\vx^*|^2\big),
\end{align*}
or equivalently the following expression in the rescaled coordinates $\vxl'$
\begin{equation}\label{eq:v-taylor-scaled}
 \vv(\vxl') = \vxl'+\mathcal{O}\big(|\vx'-\vx^*|^2\big),
\end{equation}
because $\lambda=\mathcal{O}(1)$.
Using \eqref{eq:v-taylor-scaled}, we approximately satisfy the three crucial requirements
\begin{align*}
|\vv(\vxl')|^2 &= |\vxl'|^2 +\mathcal O(|\vx'-\vx^*|^2),
\\
\vm_1(\vv(\vxl')) &= \vm_1(\vxl')+\mathcal O(|\vx'-\vx^*|^2),
\\
\nabla\big[\phi_n\big(\vv(\vxl')\big)\big] &= \sqrt{\lambda^3-1} \, \vm_n(\vx') + \mathcal O(|\vx'-\vx^*|^2).
\end{align*}
Inserting these formal approximations into \eqref{eq:func-comp} yields a map $\vy_n$ defined by \eqref{eq:lifted-surface-deg2} for $n\ge2$ that approximately satisfies the metric constraint in a vicinity of $\vx^*$
\begin{equation}\label{eq:approx-metric-g2}
\I[\vy_n(\vx')] = g_n(\vx')+\mathcal O(|\vx'-\vx^*|^2).
\end{equation}
%

\subsection{Approximate surfaces for defects of degree two}\label{sec:metric-two-defects}
We now specialize the above construction for $n=2$. In view of \eqref{eq:def-phi-n}, we realize that
\[
\phi_2(\vx') := \frac{\sqrt{\lambda^3-1}}{2a}|\vx'|^2
\quad\Rightarrow\quad
\nabla \phi_2(\vx') =  \frac{\sqrt{\lambda^3-1}}{a}\vx' = \frac{\sqrt{\lambda^3-1}}{a}|\vx'|\vm_1(\vx'),
\]
Hence, \eqref{eq:lifted-surface-deg2} gives an approximate map $\vy_2$ with
\begin{equation*}
\phi_2\big(\vv(\vx')\big) = \frac{\sqrt{\lambda^3-1}}{2a}\left(\left(\frac{a}{2}\log\big(x_1^2+x_2^2\big)+a - a\log a\right)^2 + a^2\text{arctan}^2\Big(\frac{x_2}{x_1}\Big)\right),
\end{equation*}
for $x_1>0$ and any $a>0$ not be too large so that $\vy_2$ captures the correct defect configuration. 
We display $\phi\circ \vv$ for $a=.75,\lambda = 1.1$ in Fig.~\ref{fig:deg2-taylor}, reflected for $x_1<1$ to account for symmetry, along with the computed solution from Section \ref{sec:dir_defect}.

\begin{figure}[htbp]
\includegraphics[width=.45\textwidth]{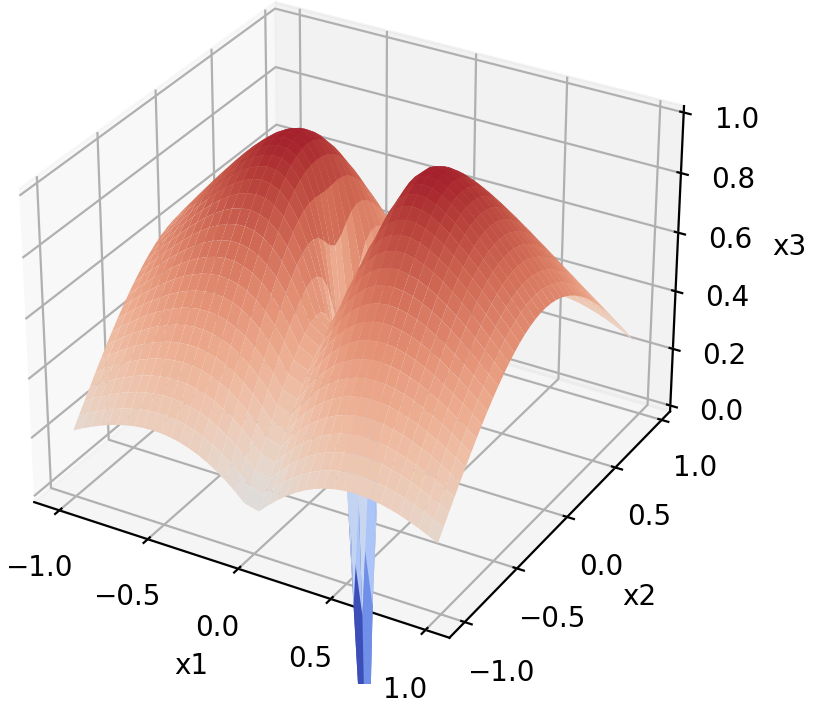}
\includegraphics[width=.4\textwidth]{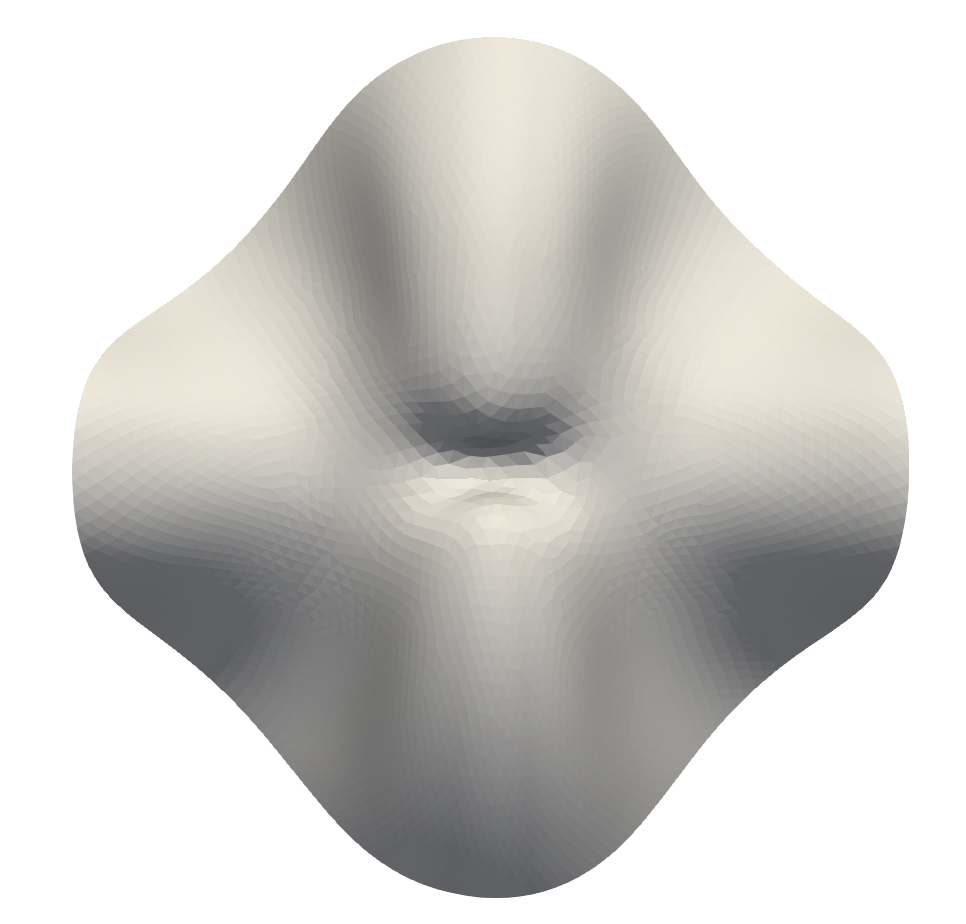}
\caption{Approximate lifted surface for degree 2 defect (left) and computed solution with the director field $\vm_2$ in Section \ref{sec:dir_defect} (right). Our derivation requires $x_1>0$, but the solution should be symmetric across the $x_2x_3$ plane, which is why we plot a reflected solution for $x_1<0$. We recover two bumps, consistent with the simulation but at the cost of a singularity at the origin.}
\label{fig:deg2-taylor}
\end{figure}

\subsection{Approximate surface for degree 3/2 defect}\label{sec:metric-3half-defects}
We now apply the above approach of composing defects, but for a defect of degree $3/2$. We intend to explain the intriguing ``bird beak" shape observed in our computations displayed in Figs.~\ref{fig:deg-3half-bird-beak} and  \ref{fig:comparison_1}. We first observe that the explicit expressions \eqref{eq:soln-degree-1half} and \eqref{eq:lifted-degree-1half} for a defect of degree $1/2$ do not quite conform with \eqref{eq:m_rot_sym_1} for $n=1/2$, except in the vicinity of the origin. Motivated by the recursion relation \eqref{eq:recursion}, we still write the degree $3/2$ director field as
\begin{equation}\label{eq:degree-3half}
\vm_{3/2}(\vx') = \vR_1(\vx')\vm_{1/2}(\vx'),
\end{equation}
with $\vm_{1/2}$ given in \eqref{eq:degree-1half}. We now construct an approximate map $\vy_{3/2}$ such that 
\begin{equation*}
\I[\vy_{3/2}(\vx')] \approx g_{3/2}(\vx') = (\lambda^2-\lambda^{-1})\vm_{3/2}(\vx')\otimes \vm_{3/2}(\vx') +\lambda^{-1}\Id_2,\end{equation*}
according to \eqref{eq:defect-comp} for $\lambda>1$. The deformation $\vy_{3/2}$ satisfies in turn \eqref{eq:lifted-surface-deg2}, namely
\begin{equation}\label{eq:y3half}
\vy_{3/2}(\vx')= \big(\vxl',\phi_{3/2}(\vv(\vxl')) \big)^T
\end{equation}
with $\phi_{3/2}$ related to $\phi_{1/2}$ via \eqref{eq:def-phi-n}. Since we are interested in an approximation for $x_1>0$ to capture the ``bird beak" structure, we deduce from \eqref{eq:lifted-degree-1half}
\[
\nabla \phi_{3/2}(\vx') = \frac{|\vx'|}{a} \nabla\phi_{1/2}(\vx')
\quad\Rightarrow\quad
\nabla \phi_{3/2}(\vx') = \sqrt{\lambda^3-1} \, \frac{|\vx'|}{a} \, \text{sign}(x_2) \,\ve_2.
\]
Unfortunately, this ideal relation is incompatible because $\curl\big( \frac{|\vxl'|}{a}\text{sign}(x_2)\ve_2\big)\neq0$ and we need to amend the construction of $\phi_{3/2}$ by approximation. To this end, we define for $x_1>0$ the following modification of $\phi_{3/2}$
\begin{equation}\label{eq:phi-3half-2}
\phi_{3/2}(\vx') = \sqrt{\lambda^3-1}\Big(1 - \frac{1}{a}\int_{0}^{|x_2|}\sqrt{s^2+x_1^2}\; ds\Big),
\end{equation}
whose gradient is
\begin{equation*}
\nabla\phi_{3/2}(\vx') = -\sqrt{\lambda^3-1}\frac{|\vx'|}{a}\text{sign}(x_2) \ve_2 -\sqrt{\lambda^3-1}\frac{1}{a}\left(\int_0^{|\vx_2|} \frac{x_1}{\sqrt{s^2+x_1^2}}\; ds\right)\ve_1.
\end{equation*}
Exploiting that the integrand in the second term is bounded by $1$ yields
\begin{equation*}
\nabla\phi_{3/2}(\vx) = -\sqrt{\lambda^3-1}\,\frac{|\vx|}{a}\,\text{sign}(x_2) \, \ve_2 -\frac{\sqrt{\lambda^3-1}}{a}\mathcal{O}(|x_2|).
\end{equation*}
To approximate the first fundamental form $\I[\vy_{3/2}]$, we recall \eqref{eq:func-comp} and compute
\begin{equation*}\label{eq:nabla-phi-nabla-v-1half}
\nabla\vv(\vx_{\lambda}')^T\nabla\phi(\vv(\vx_{\lambda}')) = \sqrt{\lambda^3-1}\vR_1(\vx_{\lambda}')\frac{|\vv(\vx_{\lambda}')|}{|\vx_{\lambda}'|}\text{sign}(\vv(\vx_{\lambda}')_2) \ve_2+ \mathcal{O}(|\vv(\vx_{\lambda}')_2|),
\end{equation*}
where $\vv(\vx_{\lambda}')_2=a \arctan(x_2/x_1)$ denotes the second component of $\vv(\vx_{\lambda}')$ written in \eqref{eq:def-v}. We thus deduce $\text{sign}(\vv(\vxl')_2) = \text{sign}(x_2)$ for $x_1>0$ and, employing that $\vm_{1/2}(\vx) = \text{sign}(x_2)\,\ve_2$ for $x_1>1$ along with \eqref{eq:degree-3half}, we arrive at
\begin{equation*}
\nabla\vv(\vx_{\lambda}')^T\nabla\phi(\vv(\vx_{\lambda}')) = \sqrt{\lambda^3-1} \, \vm_{3/2}(\vx')+ \mathcal{O}(|x_2|)+\mathcal{O}(|\vx' - \vx^*|^2),
\end{equation*}  
because $\arctan(x_2/x_1)=\mathcal{O}(|x_2|)$ for $x_1$ away from $0$. The expression \eqref{eq:y3half} for $\vy_{3/2}$ with $\phi_{3/2}$ defined in \eqref{eq:phi-3half-2} gives an approximate shape profile that satisfies
\begin{equation*}
\I[\vy_{3/2}(\vx')] = g_{3/2}(\vx') + \mathcal O(|x_2|) + \mathcal O(|\vx - \vx^*|^2).
\end{equation*}
The contour plot of the corresponding lifted surface $\phi_{3/2}\big(\vv(\vxl')\big)$ is displayed in Fig.~\ref{fig:deg-3half-bird-beak} (left) for $a = .75$. We note that the profile has a similar bird beak shape to the computational result reported in Fig.~\ref{fig:deg-3half-bird-beak} (right) and Fig.~\ref{fig:comparison_1}.
For the $x_1<0$, $\vm_{1/2}(\vx') = \vm_1(\vx')$, and one can apply the arguments in Section \ref{sec:metric-two-defects} to get the asymptotic profile for $x_1<0$.

\begin{figure}[htbp]
\includegraphics[width=.4\textwidth]{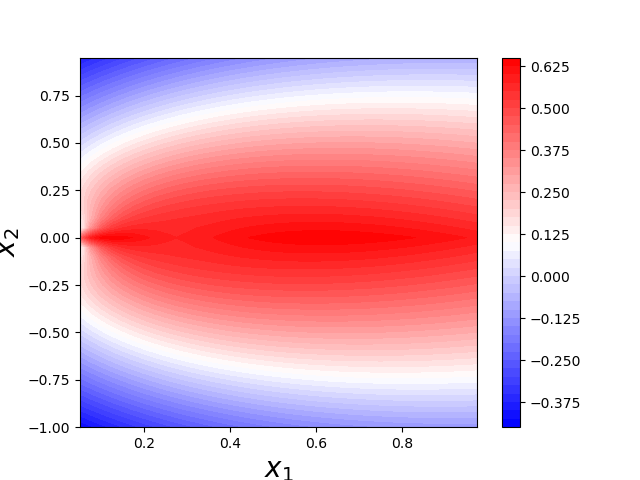}
\includegraphics[width=.33\textwidth]{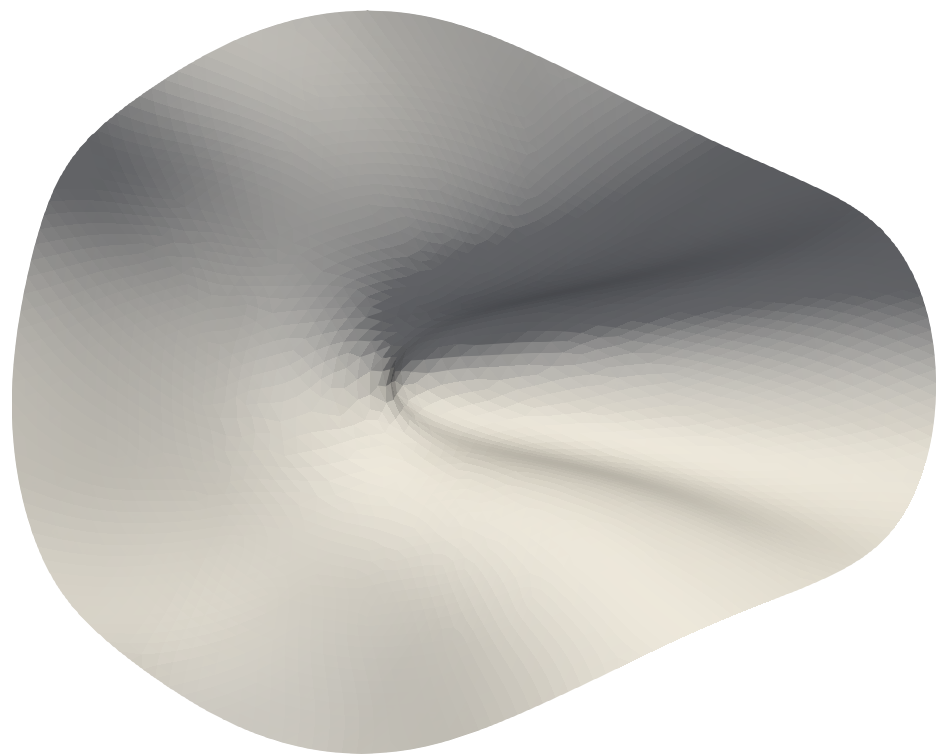}
\caption{Contour plot of approximate lifted surface for degree 3/2 defect for $a = .75$ and $x_1>0$ (left) and computational result for a degree 3/2 defect obtained in Section \ref{sec:dir_defect}. The profile matches the computed ``bird beak" shape. To see this, notice that contour lines pinch off as $x_1\to0$. As a result, the lifted surface gets steeper near the origin. This helps explains the ``bird beak" shape.}
\label{fig:deg-3half-bird-beak}
\end{figure}

The compositional method explains why we should expect the intriguing ``bird beak". We now provide a heuristic explanation. If $a$ is fixed but $x_1$ is small, we drop $x_1$ in the integrand of \eqref{eq:phi-3half-2}, and $\phi_{3/2}(\vx')$ behaves like $\tilde{\phi}_{3/2}(\vx') := \sqrt{\lambda^3-1}\big(1 - \frac{1}{2a}|x_2|^2\big)$. We see that level sets of this function are straight lines $|x_2| =$ constant that increase as $|x_2|$ decreases to $0$, very much like level sets of $\phi_{1/2}(\vx')$ in \eqref{eq:lifted-degree-1half} for $x_1>0$. On the other hand, the lifted surface $\phi_{3/2}(\vv(\vxl'))$ from \eqref{eq:y3half} behaves like 
\begin{equation}\label{eq:tilde-phi}
\tilde{\phi}_{3/2}(\vv(\vxl')) = \sqrt{\lambda^3-1}\Big(1 - \frac{a}{2} \, \arctan \Big(\frac{|x_2|}{x_1}\Big)^2\Big) ,
\end{equation}
whose level sets are radial lines $\frac{|x_2|}{x_1} = $ constant that increase as $\frac{|x_2|}{x_1}$ decreases to $0$. Therefore, the lifted surface $\tilde\phi_{3/2}(\vv(\vxl'))$ pinches off at the origin in the sense that it develops a discontinuity. In Section \ref{sec:higher-degree-idea} we advocated that a defect of degree $3/2$ could be viewed as a composition of degree $1/2$ and $1$ defects. The effect of the degree $1$ defect on \eqref{eq:soln-degree-1half} is to twist or compress the horizontal level sets of $\phi_{1/2}(\vv(\vxl'))$ into radial level sets of $\tilde\phi_{3/2}(\vv(\vxl'))$. This is due to the action of the vector-valued map $\vv$ and boils down to the replacement of $|x_2|$ in \eqref{eq:lifted-degree-1half} by $\frac{a}{2} \, \arctan \big( \frac{|x_2|}{x_1}\big)^2$ in \eqref{eq:tilde-phi}.

\section{Finite element method with regularization}\label{sec:method}
In this section, we introduce a finite element discretization to \eqref{eq:stretching-energy} and a nonlinear iterative scheme to solve the ensuing nonconvex discrete \lucas{minimization} problem.

\subsection{Discretization}
We consider a shape-regular family $\{ \Th \}$
of simplicial meshes of $\Omega$
parametrized by the mesh size $h = \max_{T \in \Th} h_T$,
where $h_T = \diam(T)$.
We denote by $\mathcal{N}_h$ the set of vertices of $\Th$, and by $\E_h$ the set of interior edges of $\Th$.

For any $T \in \Th$, we denote by $\mathcal{P}_1(T)$
the space of first-degree polynomials on $T$
and by $\mathcal{N}_h(T)$ the set of vertices of $T$.
We consider the space of $\Th$-piecewise affine and globally continuous vector-valued functions
\begin{equation}\label{eq:fem-space}
\V_h
:=
\left\{\vv_h \in C^0(\overline{\Omega};\mathbb{R}^3): \vv_h \vert_T \in[\mathcal{P}_1(T)]^3 \text{ for all } T \in\Th \right\},
\end{equation}
and note that $\V_h\subset H^1(\Omega;\R^3)$. We write $\V_h^{\ast}$ to designate the dual space of $\V_h$ and denote by $I_h$ the nodal Lagrange interpolation operator $I_h: C^0(\overline{\Omega};\R^3) \to\V_h$.

We now introduce the jump operators. To this end, let $\vn_e$ be a unit normal to $e\in\Eh$ (the orientation is chosen arbitrarily but is fixed once for all). Given a scalar piecewise polynomial function $v_h$ over $\Th$ and $e \in \E_h$, we set
\begin{equation} \label{def:jump-avrg}
\restriction{\jump{v_h}}{e} := v_h^{-}-v_h^+, 
\end{equation}
where $v_h^{\pm}(\vx'):=\lim_{s\rightarrow 0^+}v_h(\vx'\pm s\vn_e)$ for $\vx' \in e$; jumps of vector-valued and tensor-valued functions are computed component-wise. Note that for $\vv_h\in\V_h$, we have $\restriction{\jump{\vv_h}}{e}=0$ for any $e\in\Eh$, while $\restriction{\jump{\nabla\vv_h}}{e} \ne 0$ in general.

We propose the discrete energy $E_h:\V_h\to \mathbb{R}$ to be
\begin{equation}\label{eq:discrete-str}
E_h[\vy_h] = E_{str}[\vy_h] + R_h[\vy_h],
\end{equation}
where the first term is the stretching energy defined in \eqref{eq:stretching-energy} and \eqref{eq:stretching-energy-density}. $ R_h$ is the regularization term defined in \eqref{eq:regularization}, \lucas{namely}
\begin{equation}\label{eq:E-reg}
R_h[\vy_h] := \sum_{e\in\E_h}\int_{e} c_r h_e |\jump{\nabla\vy_h}|^2,
\end{equation}
\lucas{and allows for} a non-negative regularization \lucas{mesh function} $c_r:\E_h\to\mathbb{R}$
\lucas{, which we set to zero on certain edges to accommodate folding in a pre-specified pattern. We expand on this later in Section \ref{sec:regularization} below.} 
%
Here, $h_e= \text{diam}(e)$ represents the length of edge $e$. 

This is a crucial computational feature of the discrete counterpart of \eqref{eq:stretching-energy}, which reads
\begin{equation}\label{eq:minimization-pb-discrete}
\vy^{\ast}_h\in\argmin_{\vy_h\in\V_h}E_h[\vy_h].
\end{equation} 

\subsection{Regularization}\label{sec:regularization}
This section is dedicated to a discussion of the regularization term $R_h$ in \eqref{eq:E-reg}. We add $R_h$ to the stretching energy $E_{str}$, given in \eqref{eq:stretching-energy} and \eqref{eq:stretching-energy-density}, to deal with the lack of weak lower semi-continuity of $E_{str}$. We refer to our accompanying paper \cite{bouck2022NA} for a detailed discussion of the lack of weak lower semi-continuity and other possible strategies to treat it. We also prove in \cite{bouck2022NA} convergence of discrete minimizers of \eqref{eq:minimization-pb-discrete}. 

We first point out that, in the context of discontinuous Galerkin (DG) methods (for instance \cite{cockburn2012discontinuous,bonito2021numerical}), a discrete $H^2$ semi-norm for functions in $\V_h$ is defined as
\begin{equation*}\label{eq:discrete-H2}
\|D^2_h\vy_h\|_{L^2(\Omega; \mathbb{R}^{3\times2\times2})}^2+\sum_{e\in\E_h}h_e^{-1}\|\jump{\nabla\vy_h}\|_{L^2(e;\mathbb{R}^{3\times 2})}^2+\sum_{e\in\E_h}h_e^{-3}\|\jump{\vy_h}\|_{L^2(e;\R^3)}^2,
\end{equation*}
where $D^2_h$ denotes a broken Hessian on every element. Since $\vy_h\in\V_h$ is piecewise affine and continuous, only the second term remains while other terms vanish. This motivates our choice of regularization $R_h[\vy_h]$ in \eqref{eq:E-reg}, which is proportional to this discrete $H^2$ semi-norm provided $c_r$ is uniformly positive and $\Th$ is quasi-uniform. Moreover, we can regard the ratio $h_e^{-1}|[\nabla\vy_h]|$ between jumps of constant $\nabla\vy_h$ in contiguous elements to $e\in\E_h$ to the length of $e$ as a discrete Hessian of $\vy_h$ associated to $e$. The following definition of \lucas{element-wise norm of the Hessian} reveals the significance of $R_h[\vy_h]$:
\[
\int_T\big|H_h[\vy_h] \big|^2 := h_T \int_{\partial T} \frac{|[\nabla\vy_h]|^2}{h_T^2}
\quad\Rightarrow\quad
R_h[\vy_h] \approx \sum_{T\in\Th} c_r h_T^2 \int_T \big|H_h[\vy_h] \big|^2.
\]

We emphasize that the full reduced model of LCNs derived in \cite{bouck:2023thesis} and reported in Section \ref{sec:bending-energy} reads $E_{str} + t^2 E_{bend}$, where the principal term of the bending energy density $W_{bend}$ in the case $s=0$ in \eqref{eq:bouck-bending-energy-simple} is given by
\begin{equation}\label{eq:bending}
W_{bend} [\vy] := C \int_\Omega \big| \tr (g^{-\frac12} \II[\vy] g^{-\frac12}) \big|^2.
\end{equation}
We recall that $ \II[\vy] = (\partial_{ij}\vy \cdot \vnu)_{ij=1}^2\in\R^{2\times2}$ stands for the second fundamental form of the surface $\vy(\Omega)$. 
Moreover, \eqref{eq:bending} is a component of the bending energy for {\it prestrained plates} \cite{efrati2009,lewicka2016},
and it is important to realize that this energy \eqref{eq:bending} can be equivalently written as \cite{bonito2022ldg,bonito2021numerical}
%
\[
W_{bend} [\vy] = \int_\Omega c_1\big| \tr (g^{-\frac12} D^2\vy g^{-\frac12}) \big|^2 + c_2(g),
\]
at the expense of a term $c_2(g)$ that only depends on the given data $g$ but not on $\vy$, whence neglecting $c_2(g)$ does not change the minimizers; we refer to \cite{bartels2015numerical,bartels2017bilayer} for similar computations. This reveals that the effect of bending energy $\int_\Omega W_{bend} [\vy]$ with the density defined in \eqref{eq:bending} is similar to the simplified energy $\int_\Omega |D^2\vy|^2$ advocated in \cite[Eq. (1.11)]{plucinsky2018actuation}.

  This discussion provides physical justification for the structure of the regularization $R_h[\vy_h]$ in \eqref{eq:E-reg}: for $c_r>0$ constant and $\Th$ quasi-uniform, we realize that
\[
R_h[\vy_h] \approx c_r h^2 \int_\Omega \big| H_h[\vy_h] \big|^2
\]
mimics $t^2 \int_\Omega |D^2\vy|^2$ with $h$ being interpreted as a discrete thickness parameter. Moreover, allowing $c_r$ to vanish over a polygonal $\Gamma$ made of edges of $\E_h$ mimics discretely a material amenable to folding across $\Gamma$ \cite{bartels2022modeling,bartels2022error}. We prove the following statement about convergence of minimizers of \eqref{eq:minimization-pb-discrete} in \cite{bouck2022NA}.

\begin{thm}[convergence of discrete minimizers]\label{T:convergence}
Let $\Omega\backslash\Gamma = \cup_{i=1}^I\Omega_i$ be a decomposition of $\Omega$ into disjoint subdomains $\Omega_i$ due to the creases $\Gamma$, which are matched by the mesh $\Th$. Let the regularization parameter $c_r$ vanish on all edges contained in $\Gamma$. Let the target metric $g$ in \eqref{eq:target-metric} admit an isometric immersion $\vy\in W^{1,\infty}(\Omega;\mathbb{R}^3)$, i.e. $\I[\vy] = g$ a.e. in $\Omega$, that satisfies $\vy|_{\Omega_i}\in H^2(\Omega_i;\mathbb{R}^3)\cap C^1(\overline{\Omega}_i;\mathbb{R}^3)$ for all $i=1,\ldots,I$. Then there is a constant $\Lambda>0$ such that discrete minimizers $\vy_h^*$ of \eqref{eq:minimization-pb-discrete} satisfy the energy scaling
\begin{equation}\label{eq:energy-scaling}
E_h[\vy_h^*] \le \Lambda h^2.
\end{equation}
In addition, if a minimizer $\vy_h^*$ satisfies \eqref{eq:energy-scaling} and $\overline{\vy}_h^* := |\Omega|^{-1} \int_\omega\vy_h^*$ is its meanvalue, then there exists a subsequence (not relabeled) such that $\vy_h^* -  \overline{\vy}_h^* \to \vy^*$ converges strongly in $H^1(\Omega;\R^3)$ to a function $\vy^*\in W^{1,\infty}(\Omega;\mathbb{R}^3)$, that satisfies $\vy|_{\Omega_i}\in H^2(\Omega_i;\mathbb{R}^3)$ for all $i=1,\ldots,I$ and $\I[\vy^*]=g$ a.e. in $\Omega$, or equivalently
\begin{equation}\label{eq:energy-limit}
  E_{str,\Gamma}[\vy^*]=\int_{\Omega\backslash\Gamma} W_{str}(\vx',\nabla \vy^*) d\vx' =  0.
\end{equation}

\end{thm}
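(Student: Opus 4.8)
The plan is to prove the two assertions separately: first the energy scaling \eqref{eq:energy-scaling}, then the compactness and identification of the limit. For the energy bound, I would start from the given isometric immersion $\vy\in W^{1,\infty}(\Omega;\R^3)$ with $\vy|_{\Omega_i}\in H^2(\Omega_i;\R^3)\cap C^1(\overline\Omega_i;\R^3)$ and construct a competitor $\vy_h := I_h\vy \in \V_h$. Since $\vy_h^*$ is a minimizer of $E_h$, we have $E_h[\vy_h^*] \le E_h[I_h\vy] = E_{str}[I_h\vy] + R_h[I_h\vy]$, so it suffices to bound each term by $\Lambda h^2$. For $R_h[I_h\vy]$: on edges $e\subset\Gamma$ the parameter $c_r$ vanishes, so those edges contribute nothing; on edges interior to some $\Omega_i$, one uses the standard interpolation estimate $\|\jump{\nabla I_h\vy}\|_{L^2(e)}^2 \lesssim h_e\, |\vy|_{H^2(\omega_e)}^2$ for the patch $\omega_e$ of elements around $e$, so that $\int_e c_r h_e|\jump{\nabla I_h\vy}|^2 \lesssim h_e^2 |\vy|_{H^2(\omega_e)}^2$; summing over all such edges and using shape-regularity gives $R_h[I_h\vy]\lesssim h^2\sum_i |\vy|_{H^2(\Omega_i)}^2$. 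For $E_{str}[I_h\vy]$: since $W_{str}(\vx',\nabla\vy)=0$ a.e.\ by Proposition~\ref{prop:target-metric} (target metric) and Corollary~\ref{cor:minimizers}, and since $W_{str}$ is locally Lipschitz in $\vF$ away from the rank-deficient set (with constants controlled by $\|g\|_{L^\infty}$ and its lower ellipticity bound), a Taylor/Lipschitz estimate gives $|W_{str}(\vx',\nabla I_h\vy) - W_{str}(\vx',\nabla\vy)| \lesssim |\nabla I_h\vy - \nabla\vy|$ plus a quadratic remainder wherever $\nabla I_h\vy$ stays in the region of non-degeneracy; integrating and using $\|\nabla I_h\vy - \nabla\vy\|_{L^2(\Omega_i)}\lesssim h|\vy|_{H^2(\Omega_i)}$ yields $E_{str}[I_h\vy]\lesssim h\,|\vy|_{H^2}+h^2|\vy|_{H^2}^2$. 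The linear-in-$h$ term is concerning, so one must exploit that $\nabla\vy$ is a critical point of the energy density, i.e.\ $D_\vF W_{str}(\vx',\nabla\vy)=0$ since the density attains its minimum value $0$ there; this upgrades the estimate to a genuinely quadratic bound $E_{str}[I_h\vy]\lesssim h^2|\vy|_{H^2}^2$, completing \eqref{eq:energy-scaling}.

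For the second part, assume $\vy_h^*$ satisfies \eqref{eq:energy-scaling}. Subtracting the meanvalue $\overline\vy_h^*$ is harmless since $E_h$ is translation invariant. The key is to extract a bound on $\|\vy_h^* - \overline\vy_h^*\|_{H^1(\Omega)}$ uniform in $h$. From the nondegeneracy of $W_{str}$ (Proposition~\ref{cor:stretching-nondegeneracy}), $E_{str}[\vy_h^*]$ controls $\int_\Omega \mathrm{dist}(\vL_\vn^{-1/2}[\nabla\vy_h^*,\vb]\vL_\vm^{1/2}, SO(3))^2$, which in turn controls $\|\nabla\vy_h^*\|_{L^2}$ up to additive constants depending only on data; combined with a Poincaré inequality after subtracting the mean, this gives a uniform $H^1$ bound. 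The regularization term $R_h[\vy_h^*]\le\Lambda h^2$ then plays the role of the compactness-providing higher-order term: on each subdomain $\Omega_i$ (away from $\Gamma$), $\sum_{e\in\E_h, e\subset\Omega_i} h_e^{-1}\|\jump{\nabla\vy_h^*}\|_{L^2(e)}^2 \le c_r^{-1} h^{-2} R_h[\vy_h^*] \lesssim 1$, i.e.\ the broken $H^2$-type seminorm of $\vy_h^*$ is bounded on each $\Omega_i$. A discrete compactness argument (a broken-Sobolev / discontinuous-Galerkin compactness theorem, as in the companion paper and the DG references cited) then yields a subsequence with $\vy_h^* - \overline\vy_h^* \to \vy^*$ strongly in $H^1(\Omega;\R^3)$ and with $\vy^*|_{\Omega_i}\in H^2(\Omega_i;\R^3)$.

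It remains to identify the limit: $\I[\vy^*] = g$ a.e., equivalently $E_{str,\Gamma}[\vy^*]=0$. By strong $H^1$ convergence, $\nabla\vy_h^*\to\nabla\vy^*$ in $L^2(\Omega)$, hence (along a further subsequence) a.e.\ in $\Omega$, so $\I[\vy_h^*]\to\I[\vy^*]$ a.e. Since $W_{str}(\vx',\cdot)$ is continuous on the non-degenerate set and nonnegative, Fatou's lemma gives $\int_{\Omega\backslash\Gamma} W_{str}(\vx',\nabla\vy^*)\,d\vx' \le \liminf_h E_{str}[\vy_h^*] \le \liminf_h E_h[\vy_h^*] \le \liminf_h \Lambda h^2 = 0$, and since the integrand is nonnegative we conclude $W_{str}(\vx',\nabla\vy^*)=0$ a.e., hence $\I[\vy^*]=g$ a.e.\ by Proposition~\ref{prop:target-metric}. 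One caveat: one must ensure $\nabla\vy^*$ has rank $2$ a.e.\ so the density and the target-metric characterization apply; this follows because $g$ is uniformly SPD, so $\I[\vy_h^*]$ is uniformly SPD in the limit, preventing rank degeneration. The main obstacle is the second part — specifically, obtaining the uniform $H^1$ bound and then invoking the correct discrete compactness result for broken piecewise-affine spaces with a creased (partially vanishing $c_r$) regularization; this is exactly the technical heart deferred to \cite{bouck2022NA}, and the argument must carefully track that compactness holds subdomain-by-subdomain while only weak continuity of the deformation (not its gradient) survives across $\Gamma$.
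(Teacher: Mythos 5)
The paper does not actually prove Theorem \ref{T:convergence} in this manuscript: the text immediately preceding the statement says explicitly that the proof is given in the companion paper \cite{bouck2022NA}, and only a brief informal discussion of the role of the quadratic energy scaling appears here. There is therefore no in-paper proof against which to check your proposal. That said, your outline follows the structure that the surrounding discussion in the paper anticipates, and it is sound in its essentials: (i) use the Lagrange interpolant $I_h\vy$ of a $W^{1,\infty}\cap$ piecewise-$H^2\cap C^1$ isometric immersion as a competitor, kill the $\mathcal{O}(h)$ term in the stretching-energy error by observing that $\nabla\vy$ is a pointwise global minimizer of $W_{str}(\vx',\cdot)$ (hence $D_\vF W_{str}(\vx',\nabla\vy)=0$), and bound $R_h[I_h\vy]$ by $h^2$ using the gradient-jump interpolation estimate on interior edges together with $c_r\equiv 0$ on $\Gamma$; (ii) obtain a uniform $H^1$ bound from the nondegeneracy in Proposition \ref{cor:stretching-nondegeneracy} plus Poincar\'e, extract strong $H^1$ compactness from the scaled jump bound via a discrete DG-type compactness theorem applied subdomain by subdomain, and identify the limit via Fatou and Proposition \ref{prop:target-metric}.

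Two places where you should make the argument more precise. First, in the upper bound for $E_{str}[I_h\vy]$ the Taylor remainder requires a \emph{uniform} bound on $D^2_\vF W_{str}$ in a neighborhood of $\nabla\vy$, and this is exactly where the $C^1(\overline\Omega_i)$ hypothesis is needed: $H^2$ alone in 2D does not give $\nabla I_h\vy\to\nabla\vy$ in $L^\infty$, so without $C^1$ you could not guarantee that $\nabla I_h\vy$ stays inside a compact subset of the set where $W_{str}(\vx',\cdot)$ is smooth (i.e.\ where $J(\cdot)$ is bounded away from zero). You gesture at this but should state it explicitly, since otherwise the Hessian constant is not controlled. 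Second, when you pass to the limit with Fatou you should work with the lower semicontinuous extension $W_{str}(\vx',\vF)=+\infty$ when $\mathrm{rank}(\vF)<2$; with that convention the rank-$2$ a.e.\ property of $\nabla\vy^*$ is a \emph{conclusion} of $E_{str,\Gamma}[\vy^*]=0$ rather than an assumption, which removes the slightly circular flavor of your closing remark. The remaining gap you identify — a quantitative discrete compactness theorem for piecewise-affine vectors whose scaled gradient jumps are controlled on $\Omega\setminus\Gamma$ — is indeed the technical heart, and deferring it to \cite{bouck2022NA} is consistent with what the paper itself does.
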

 
We regard \eqref{eq:E-reg} as a mechanism for equilibria selection. In the absence of regularization, i.e. $c_r=0$ in \eqref{eq:E-reg}, minimizers $\vy_h^*\in\V_h$ of \eqref{eq:minimization-pb-discrete} can exhibit extra bumps and wrinkling which have negligible influence on the stretching energy. This is a manifestation of lack of convexity of $W_{str}$, and thus of uniqueness, and leads to the formation of micro-structure \cite{bartels2015numerical}. In our model the liquid crystal director is frozen, so we expect mechanical wrinkling. We point to \cite[Example 2.8]{bouck2022NA} for an explicit construction showing the possibility of wrinkling. This topic is well studied in  nonlinear elasticity, both theory and computation, but it is not the focus of this paper. We resort to \eqref{eq:E-reg} to suppress numerical oscillations in Section~\ref{sec:eff-reg} as well as to allow for folding in the development of compatible origami-structures in Section \ref{sec:origami} and incompatible origami-structures in Section \ref{sec:incompatible-s}. The latter lead to weak limits $y^*\in H^1(\Omega;\R^3)$ with $E_{str}[\vy^*]>0$, so they are not minimizers of the stretching energy $E_{str}$.  

\subsection{Iterative solver}\label{sec:newton-scheme}
We design a nonlinear discrete gradient flow to find a solution to \eqref{eq:minimization-pb-discrete} in this subsection. Due to the stretching energy being non-quadratic and non-convex, we end up with a nonlinear non-convex discrete problem to solve.

\subsubsection{Nonlinear gradient flow.}
Implicit gradient flows are robust methods to find stationary points of energy functionals $E$ regardless of their convexity, and have the advantage of built-in energy stability; they belong to the class of energy descent methods. Consider the auxiliary evolution equation $\partial_tA[\vy] + \delta E[\vy]=0$, where $A$ is a symmetric elliptic operator, $\delta E[\vy]$ stands for the first variation of $E$, and $t$ is a pseudo-time. The backward Euler discretization reads: given $\vy^n$ solve for $\vy^{n+1}$
\begin{equation*} 
\frac{1}{\tau}\big(A\vy^{n+1}- A\vy^n \big) + \delta E[\vy^{n+1}]=0,
\end{equation*}
where $\tau$ is the time-step discretization parameter. The weak formulation of this semi-discrete equation is equivalent to minimizing the augmented functional
\begin{equation}\label{eq:penalization}
L^n[\vy] := \frac{1}{2\tau}\|\vy-\vy^n\|_A^2 + E[\vy]
\end{equation}
where $\|\cdot\|_A$ is the norm associated with the operator $A$, i.e. $\|\vy\|_A^2 := \langle A\vy,\vy\rangle$. This can be reinterpreted as finding a minimizer of $E$ constrained to be closed to $\vy^n$; so the first term in \eqref{eq:penalization} penalizes the deviation of $\vy$ from $\vy^n$ in the $A$-norm.

Since the stretching energy $E_{str}$ from \eqref{eq:stretching-energy} and \eqref{eq:stretching-energy-density} is formulated in $H^1(\Omega;\R^3)$, we choose $A=I-\Delta$ and the corresponding norm to be the $H^1(\Omega;\R^3)$-norm. This choice has the property of making $L^n_h$ convex in $H^1(\Omega;\R^3)$ provided $\tau$ is sufficiently small and $\det\I[\vy^n_h]$ is bounded away from 0. With this in mind, we devise a discrete counterpart of \eqref{eq:penalization} to find stationary points of $E_h$ in \eqref{eq:discrete-str} and, under some additional assumptions on the current iterate $\vy_h^n\in\V_h$ to be discussed below, solve \eqref{eq:E-reg}. We thus minimize
\begin{align}\label{eq:gf-energy}
L^n_h[\vy_h] :=\frac{1}{2\tau}\|\vy_h-\vy^n_h\|^2_{H^1(\Omega;\R^3)}+E_h[\vy_h],
\end{align}
whose Euler-Lagrange equation results from computing the first order variation of $L^n_h[\vy_h]$ in the direction $\vv_h$
\begin{equation}\label{eq:gf-onestep}
  \delta L^n_h[\vy_h](\vv_h)=\frac{1}{\tau}(\vy_h,\vv_h)_{H^1(\Omega;\R^3)}+\delta E_h[\vy_h](\vv_h)-F^n_h(\vv_h)=0
  \quad\forall \, \vv_h\in\V_h,
\end{equation}
where $F^n_h\in\V_h^{\ast}$ is defined as
\begin{align*}
F^n_h(\vv_h):=\frac{1}{\tau}(\vy_h^n,\vv_h)_{H^1(\Omega;\R^3)},
\end{align*}
and
\begin{align*}
  \delta E_h[\vy_h](\vv_h)
  =\delta E_{str}[\vy_h](\vv_h)+\delta R_h[\vy_h](\vv_h).
\end{align*}
 Moreover, given a tolerance $\tol_1>0$, we stop the nonlinear gradient flow when
\begin{equation*}
\frac{1}{\tau} \big|E_h[\vy_h^{N}]-E_h[\vy_h^{N-1}]\big|~<~ \tol_1
\end{equation*}
is satisfied for some $N>0$. The function $\vy_h^{N}\in\V_h$ is the desired output. We next discuss how we solve the nonlinear equation \eqref{eq:gf-onestep}.

We solve each step $n$ of \eqref{eq:gf-onestep} by a {\it Newton-type sub-iteration}: letting $\vy_h^{n,0}:=\vy_h^{n}$ and assuming $\vy_h^{n,k}\in\V_h$, we solve for the increment $\delta\vy_h^{n,k}\in\V_h$
\begin{equation}\label{eq:Newton-step}
  \delta^2 L^n_h[\vy^{n,k}_h](\delta\vy_h^{n,k},\vv_h)=-\delta L^n_h[\vy^{n,k}_h](\vv_h)
  \quad\forall\,\vv_h\in\V_h,
\end{equation}
In Theorem \ref{thm:Newton-conv-one} we summarize properties of this Newton iteration, and refer to \cite[Appendix A]{bouck2022reduced} for complete proofs and discussion.
\begin{thm}[quadratic estimate]\label{thm:Newton-conv-one}
For any $n\ge0$ and $k\ge0$, suppose $\vy^{n,k}_h$ satisfies
\begin{equation}\label{eq:cond-yh}
0<c_1 \le \lambda_1[\vy_h^{n,k}]  \leq  \lambda_2[\vy_h^{n,k}] \le c_2 \quad\forall\, T\in\Th,
\end{equation}
where $\lambda_1[\vy_h^{n,k}]$ and $\lambda_2[\vy_h^{n,k}]$ are eigenvalues of $\I[\vy_h^{n,k}]$. If $\tau$ is smaller than a constant depending only on $c_1,c_2,\lambda,\vm$, then \eqref{eq:Newton-step} is well-posed and $\delta\vy^{n,k}_h$ is the unique solution. Moreover, if $\vy^{n,k+1}_h:=\vy^{n,k}_h+\delta\vy^{n,k}_h$, then
\begin{equation}\label{eq:quadratic-conv}
\|\vy_h^{n,k+1}-\vy_h^{n,\ast}\|_{H^1(\Omega;\mathbb{R}^3)}\le\frac{C}{2h}\|\vy_h^{n,k}-\vy_h^{n,\ast}\|_{H^1(\Omega;\mathbb{R}^3)}^2,
\end{equation}
where $C$ is a constant that depends on $c_1,c_2,\lambda$ and $\vy_h^{n,\ast}$ is a local minimizer of $L^n_h$.
\end{thm}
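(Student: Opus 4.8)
The plan is to run a Newton--Kantorovich argument in two steps. First I would establish well-posedness of the linear system \eqref{eq:Newton-step} by showing that the Hessian bilinear form $\delta^2 L^n_h[\vy_h^{n,k}](\cdot,\cdot)$ is coercive on $\V_h$; then I would derive the quadratic contraction \eqref{eq:quadratic-conv} from a Taylor expansion of the residual $\delta L^n_h$, a Lipschitz bound on $\delta^2 L^n_h$, and an inverse estimate.

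\emph{Well-posedness.} I would decompose $\delta^2 L^n_h[\vw_h](\vz_h,\vv_h) = \tfrac1\tau(\vz_h,\vv_h)_{H^1(\Omega;\R^3)} + \delta^2 E_{str}[\vw_h](\vz_h,\vv_h) + \delta^2 R_h[\vw_h](\vz_h,\vv_h)$ and note that $R_h$ is a quadratic form, hence $\delta^2 R_h[\vw_h](\vv_h,\vv_h)\ge0$ for every $\vw_h$. The eigenvalue bounds \eqref{eq:cond-yh} force $J[\vy_h^{n,k}]=\det\I[\vy_h^{n,k}]\in[c_1^2,c_2^2]$ and $C_\vm[\vy_h^{n,k}]\in[c_1,c_2]$ elementwise, so $W_{str}(\vx',\cdot)$ is smooth near $\nabla\vy_h^{n,k}$ with first and second matrix-derivatives bounded by some $C_\ast=C_\ast(c_1,c_2,\lambda,\vm)$; this gives $|\delta^2 E_{str}[\vy_h^{n,k}](\vz_h,\vv_h)|\le C_\ast\|\vz_h\|_{H^1(\Omega;\R^3)}\|\vv_h\|_{H^1(\Omega;\R^3)}$. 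Picking $\tau<1/C_\ast$ then yields $\delta^2 L^n_h[\vy_h^{n,k}](\vv_h,\vv_h)\ge(\tfrac1\tau-C_\ast)\|\vv_h\|_{H^1(\Omega;\R^3)}^2$ for all $\vv_h\in\V_h$, so this symmetric form is positive definite on the finite-dimensional space $\V_h$ and \eqref{eq:Newton-step} has a unique solution $\delta\vy_h^{n,k}$.

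\emph{Quadratic contraction.} Since $\vy_h^{n,\ast}$ is a local minimizer, $\delta L^n_h[\vy_h^{n,\ast}](\cdot)=0$. Writing $\vy_h(t):=\vy_h^{n,\ast}+t(\vy_h^{n,k}-\vy_h^{n,\ast})$ and using the fundamental theorem of calculus, $\delta L^n_h[\vy_h^{n,k}](\vv_h)=\int_0^1\delta^2 L^n_h[\vy_h(t)](\vy_h^{n,k}-\vy_h^{n,\ast},\vv_h)\,dt$. Subtracting this from \eqref{eq:Newton-step} with $\vy_h^{n,k+1}=\vy_h^{n,k}+\delta\vy_h^{n,k}$ gives $\delta^2 L^n_h[\vy_h^{n,k}](\vy_h^{n,k+1}-\vy_h^{n,\ast},\vv_h)=\int_0^1\big(\delta^2 L^n_h[\vy_h^{n,k}]-\delta^2 L^n_h[\vy_h(t)]\big)(\vy_h^{n,k}-\vy_h^{n,\ast},\vv_h)\,dt$. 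The $H^1$-penalty and $R_h$ parts of the Hessian are independent of the base point, so they cancel in the difference and only $\delta^2 E_{str}$ remains; a Lipschitz estimate for $\delta^2 E_{str}$ (i.e.\ a bound on $\delta^3 E_{str}$) bounds the right-hand side by $C\,\|\nabla(\vy_h^{n,k}-\vy_h^{n,\ast})\|_{L^\infty(\Omega)}\|\nabla(\vy_h^{n,k}-\vy_h^{n,\ast})\|_{L^2(\Omega)}\|\nabla\vv_h\|_{L^2(\Omega)}$. Using the inverse inequality $\|\nabla\vw_h\|_{L^\infty(\Omega)}\le Ch^{-1}\|\nabla\vw_h\|_{L^2(\Omega)}$, testing with $\vv_h=\vy_h^{n,k+1}-\vy_h^{n,\ast}$, and dividing by $\|\vy_h^{n,k+1}-\vy_h^{n,\ast}\|_{H^1(\Omega;\R^3)}$ via the coercivity of the first part, I would arrive at \eqref{eq:quadratic-conv} after absorbing the coercivity constant into $C$ (shrinking $\tau$ if necessary).

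The step I expect to be the main obstacle is the quantitative control of the derivatives of the non-polynomial density $W_{str}$ — in particular the Lipschitz continuity of its Hessian with a constant depending only on $c_1,c_2,\lambda,\vm$, since $W_{str}$ contains the rational terms $1/J[\vy]$ and $J[\vy]/C_\vm[\vy]$ — together with the need to guarantee that $\det\I$ (hence smoothness of $W_{str}$) stays uniformly bounded away from $0$ along the entire segment $\{\vy_h(t):t\in[0,1]\}$ joining $\vy_h^{n,k}$ to $\vy_h^{n,\ast}$; this is the point where the estimate becomes genuinely local in $\vy_h^{n,k}$. The remaining ingredients — the inverse estimate and the bookkeeping of constants in $\tau$ — are routine. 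Complete details are carried out in \cite[Appendix A]{bouck2022reduced}.
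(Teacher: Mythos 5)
The paper does not prove this theorem in the text; it refers to the companion reference \cite[Appendix A]{bouck2022reduced}, so no line-by-line comparison is possible here. Your proposal nonetheless follows the canonical Newton--Kantorovich route that the cited appendix almost surely uses: (i) coercivity of $\delta^2 L^n_h$ by choosing $\tau$ below a threshold depending only on a uniform bound for $\delta^2 E_{str}$ (itself controlled by the eigenvalue bounds \eqref{eq:cond-yh}, since then $J$ and $C_\vm$ are bounded away from $0$ so all matrix derivatives of $W_{str}$ are uniformly bounded), with positive semidefiniteness of the $R_h$ part being automatic since $R_h$ is a quadratic form; and (ii) the quadratic contraction from the residual identity $\delta^2 L^n_h[\vy_h^{n,k}](\vy_h^{n,k+1}-\vy_h^{n,\ast},\vv_h)=\int_0^1\bigl(\delta^2 L^n_h[\vy_h^{n,k}]-\delta^2 L^n_h[\vy_h(t)]\bigr)(\vy_h^{n,k}-\vy_h^{n,\ast},\vv_h)\,dt$, a Lipschitz bound on $\delta^2 E_{str}$, and the two-dimensional inverse estimate $\|\nabla\vw_h\|_{L^\infty}\lesssim h^{-1}\|\nabla\vw_h\|_{L^2}$, which produces the characteristic $h^{-1}$ factor.

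You also put your finger on the one genuinely delicate point. The hypothesis \eqref{eq:cond-yh} is stated only at $\vy_h^{n,k}$, yet the Lipschitz estimate on the Hessian of $E_{str}$ requires control of $\delta^2 E_{str}[\vy_h(t)]$ (and hence of $\det\I[\vy_h(t)]$ and $C_\vm[\vy_h(t)]$) along the entire segment $t\in[0,1]$ joining $\vy_h^{n,k}$ to $\vy_h^{n,\ast}$. Since the set of gradients with eigenvalues in $[c_1,c_2]$ is not convex, this is not automatic; one needs either the (implicitly assumed) fact that $\vy_h^{n,\ast}$ also satisfies \eqref{eq:cond-yh}, together with a small-ball restriction $\|\vy_h^{n,k}-\vy_h^{n,\ast}\|_{H^1}\le Ch$ which, via the same inverse estimate, forces $\|\nabla\vy_h(t)-\nabla\vy_h^{n,\ast}\|_{L^\infty}$ small so that $\I[\vy_h(t)]$ stays in a slightly enlarged compact set of nondegenerate matrices. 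The display \eqref{eq:sub-initial} and the discussion following the theorem show that the paper does indeed operate in exactly this regime, so the estimate should be read as an a-priori, local estimate conditional on that ball condition; spelling this out is the only thing your sketch leaves to be made precise.
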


Theorem \ref{thm:Newton-conv-one} yields quadratic convergence under a smallness assumption on $\tau$. To see this, let $\vy_h^{n,0}$ satisfy \eqref{eq:cond-yh} and 
\begin{equation}\label{eq:sub-initial}
\|\vy_h^{n,0}-\vy_h^{n,\ast}\|_{H^1(\Omega;\mathbb{R}^3)} \le Ch.
\end{equation}
Then for $k\ge0$ an induction argument combined with \eqref{eq:quadratic-conv} yields
\begin{equation}\label{eq:error-monotone}
\|\vy_h^{n,k+1}-\vy_h^{n,\ast}\|_{H^1(\Omega;\mathbb{R}^3)}\le \frac12 \|\vy_h^{n,k}-\vy_h^{n,\ast}\|_{H^1(\Omega;\mathbb{R}^3)} < Ch.
\end{equation}
This implies that the sub-iterations $\vy^{n,k}_h$ remain within an $H^1$-ball of radius $Ch$ centered at $\vy^{n,\ast}_h$ and converge to $\vy^{n,\ast}_h$; in view of \eqref{eq:quadratic-conv} this convergence is quadratic.

It remains to check whether the initialization condition \eqref{eq:sub-initial} is realistic. Assume that $E_h[\vy^0_h]\le A_0$ for a constant $A_0>0$ and recall that $\vy_h^{n,0}=\vy_h^n$ to deduce
\begin{equation*} 
\frac{1}{2\tau}\|\vy_h^{n,\ast}-\vy_h^n\|^2_{H^1(\Omega;\mathbb{R}^3)}\le L^n_h[\vy^{n,\ast}_h]\le L^n_h[\vy^n_h]=E_h[\vy^n_h]\le E_h[\vy^0_h]\le A_0.
\end{equation*}
Consequently, if $\tau\le\frac{C^2h^2}{2A_0}$ then \eqref{eq:sub-initial} is valid. However, quantitative numerical experiments in our accompanying paper \cite{bouck2022NA} reveal that the largest admissible value of $\tau$ is independent of $h$. This is strikingly better than our theoretical prediction.

\section{Computing Shapes: Defects and Origami Structures}\label{sec:simulations}
We have implemented Algorithm~\ref{algo:GF_euler} below using the multiphysics
finite element software Netgen/NGSolve \cite{schoberl2017netgen},
whereas the visualization relies on ParaView~\cite{Ahrens2005}.
\RestyleAlgo{boxruled}
\begin{algorithm}[htbp]
    \SetAlgoLined
	Given a pseudo time-step $\tau>0$ and target tolerances $\tol_1$ and $\tol_2$\;
	Choose initial guess $\vy_h^0\in\V_h$\;
	\While{$\tau^{-1}\big|E_h[\vy_h^{n+1}]-E_h[\vy_h^{n}]\big|>\tol_1$}{
		\textbf{Set} $\vy^{n,0}_h=\vy_h^n$, $k=0$\;
		\While{$\big|\delta L^{n}_h[\vy_h^{n,k}](\delta\vy_h^{n,k})\big|^{1/2}> \tol_2$}{
		\textbf{Solve} \eqref{eq:Newton-step} for $\delta\vy_h^{n,k}$\;
		\textbf{Update} $\vy^{n,k+1}_h=\vy^{n,k}_h+\delta\vy_h^{n,k}$, $k=k+1$\;
		}
		\textbf{Update} $\vy^{n+1}_h:=\vy_h^{n,k}$, where $k$ is the index of last sub-iterate.
	}
	\caption{(nonlinear gradient flow scheme)} \label{algo:GF_euler}
\end{algorithm}

We recall that global minimizers to \eqref{eq:stretching-energy} are characterized by the pointwise metric relation \eqref{eq:metric-cons}, namely $\I[\vy]=g$. Therefore, if $\vy^{\infty}_h$ denotes the output of Algorithm~\ref{algo:GF_euler}, we quantify the deviation of $\I[\vy^{\infty}_h]$ from the target metric $g$ by
\begin{equation}\label{eq:metric-deviation}
e_h^1[\vy^{\infty}_h] :=  \big\| \I[\vy^{\infty}_h] \, - \, g \big\|_{L^1(\Omega;\mathbb{R}^{2\times2})},
\end{equation}
and employ it as an error indicator between the approximate solution $\vy^{\infty}_h$ and an exact global minimizer to \eqref{eq:stretching-energy}.

We report quantitative properties of Algorithm~\ref{algo:GF_euler} in \cite{bouck2022NA}, namely
\begin{itemize} 
\item The metric deviation $e_h^1[\vy^{\infty}_h]$ converges as $\mathcal{O}(h)$. 
\item The energy $E_h[\vy^{\infty}_h]$ converges as $\mathcal{O}(h^2)$ when $\vm$ is smooth.
\item The energy $E_h[\vy^{\infty}_h]$ converges sub-quadratically when $\vm$ has a point defect.
\end{itemize}
The focus of this section, instead, is on the ability of Algorithm~\ref{algo:GF_euler} to capture quite appealing and practical physical phenomena related to shape formation. In Section \ref{sec:numerical-benchmark}, we validate our discrete reduced model \eqref{eq:minimization-pb-discrete} against configurations from the literature such as existing laboratory experiments or modeling results. In Section \ref{sec:incompatible-s}, we further explore incompatible nonisometric origami and present novel simulations for LCNs, which offer valuable insights for laboratory experiments.


\subsection{Benchmark shapes: defects and compatible nonisometric origami}\label{sec:numerical-benchmark}
We validate our reduced mathematical model and computational method against known equilibrium configurations originating from defects and creases.

\subsubsection{Rotationally symmetric director fields and defects}\label{sec:dir_defect}
Let $\Omega~\subset~\mathbb{R}^2$ be the unit disc and, motivated by \cite{modes2011gaussian,white2015programmable,mcconney2013topography,chung2017finite}, let the blueprinted director field $\vm\in\mathbb{S}^1$ be a rotation of \eqref{eq:m_rot_sym_1} by an angle $\alpha$ with degree $n$ 
\begin{equation}\label{eq:m_rot_sym_2}
\vm(r,\theta) = \big(\cos(n(\theta+\alpha)),\sin(n(\theta+\alpha))\big);
\end{equation}
$\vm$ is discontinuous at the origin. We run Algorithm~\ref{algo:GF_euler} with several values of $n$ and $\alpha$ and display the output in Fig.~\ref{fig:comparison_1}. We use physical and numerical parameters
\[
s=0.1, ~ s_0=1; \quad \textrm{tol}_1=10^{-6}, ~ \textrm{tol}_2=10^{-10},
~ \tau=0.1, ~ h=1/32, ~ c_r=1.
\]
We initialize Algorithm~\ref{algo:GF_euler} with $\vy_h^0=I_h\vy^0$, where
\begin{equation}\label{eq:initialization}
  \vy^0(\vx)~=~ \big(\vx,0.05(1-|\vx|^2) \big)
  \quad \vx\in\Omega
\end{equation}
is a small perturbation of a flat disc (i.e. $\vy(\vx)=(\vx,0)$), and $I_h:C^0(\Omega;\mathbb{R}^3)\to \mathbb{V}_h$ is the Lagrange interpolation operator. Metric deviations $e_h^1[\vy^{\infty}_h]$ and energies $E_h[\vy^{\infty}_h]$ are at the scale of $10^{-2}$ and $10^{-3}$ respectively, indicating that Algorithm~\ref{algo:GF_euler} produces quite accurate approximations to global minimizers of \eqref{eq:stretching-energy}.
\begin{figure}[htbp]
\includegraphics[width=13.cm]{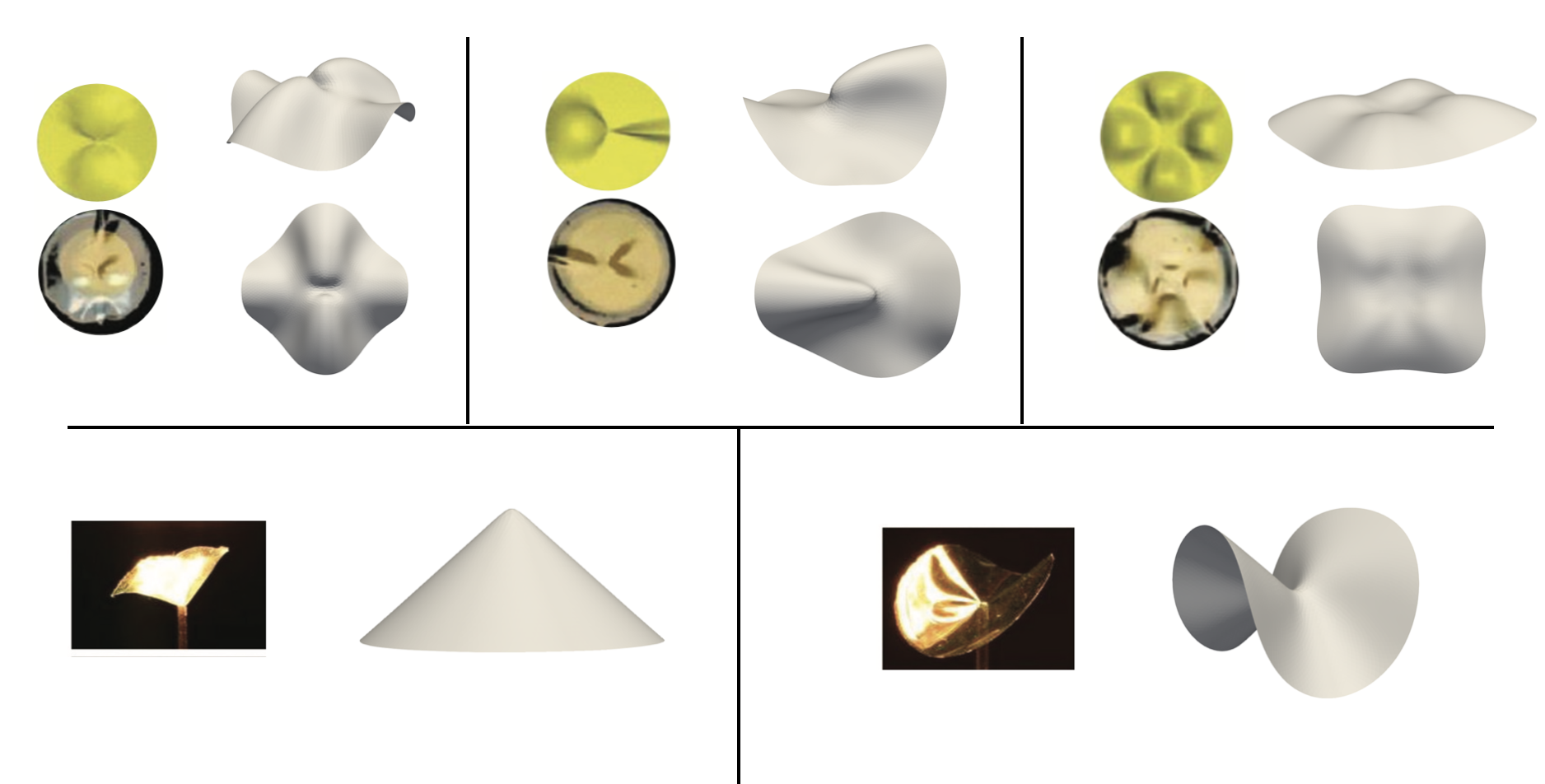}
\caption{\emph{Director fields with point defects of degree $n$}. First row displays $n=2,3/2,-1$ and $\alpha=0$ (from left to right). Each panel shows experimental and expected configurations from \cite{mcconney2013topography} as well as two views of the computed solution. Second row depicts experimental pictures from \cite{de2012engineering} and our simulations of the cone structure $n=1, \alpha=\frac{\pi}{2}$ (left) and anti-cone structure $n=1, \alpha=0$ (right). The numerical model reproduces experimental observations well.}
\label{fig:comparison_1}
\end{figure}

Fig.~\ref{fig:comparison_1} illustrates the ability of Algorithm~\ref{algo:GF_euler}, namely the reduced discrete model \eqref{eq:minimization-pb-discrete}, to capture physical phenomena. Comparing the computed shapes with experimental and expected configurations in \cite{modes2011gaussian,white2015programmable}, we find striking similarities.

%

\subsubsection{High order defects and role of regularization}\label{sec:eff-reg}
%
In Fig.~\ref{fig:deg_pm5}, we present computed shapes corresponding to director fields \eqref{eq:m_rot_sym_2} with $n=\pm5$ and $\alpha=0$; otherwise the setting is the same as in Section \ref{sec:dir_defect}. It is worth noting that conducting laboratory experiments for high-order defects poses practical challenges \cite{bauman2023private}. Conversely, our computational method and model offer a convenient and efficient means of predicting these shapes. Our results align with the observed trend found in \cite{mcconney2013topography}: higher-order defects degrees correlate with larger number of oscillations within the disc. However, a quantitative study of wave-lengths seems to be lacking.

Furthermore, we investigate the computational impact of the regularization parameter $c_r$ in Fig.~\ref{fig:deg_pm5} (left and middle) by comparing $c_r=1$ and $c_r=0.2$ for degree $n=5$. We observe that for $c_r$ small, self-similar wrinkling patterns emerge around the origin, where the point defect is located. Conversely, a larger value of $c_r$ eliminates these wrinkles, thereby providing further support for the interpretation in Section \ref{sec:regularization} of regularization as a pseudo bending energy term. The nature of these wrinkles, whether numerical oscillations or physical microstructures, is an open question worth investigating.

\begin{figure}[htbp]
\includegraphics[width=1.\textwidth]{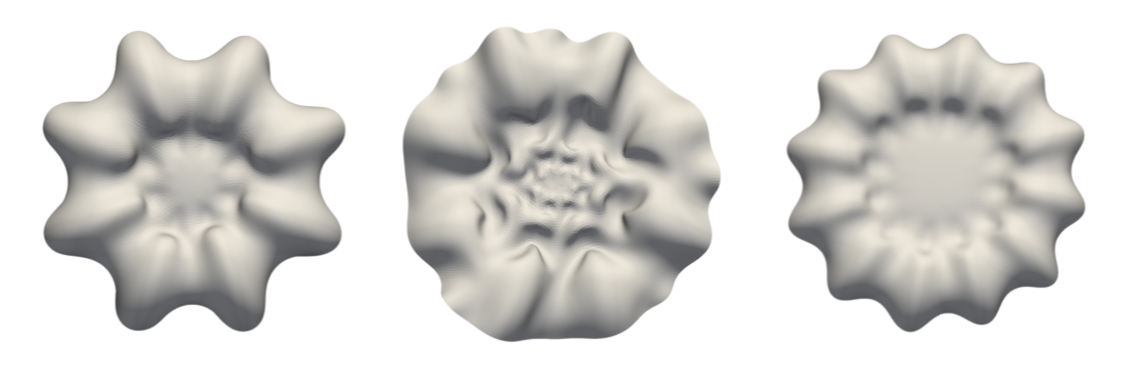}
\vskip-0.5cm
\caption{\emph{Director fields with point defects of degree $\pm5$}. Left: $n=5$ and $c_r=1$. Middle: $n=5$ and $c_r=0.2$ (notice oscillations close to the origin). Right: $n=-5$ and $c_r=1$. Left and middle computations illustrate the role of regularization.}
\label{fig:deg_pm5}
\end{figure}

\subsubsection{Compatible nonisometric origami}\label{sec:origami}
Origami are ancient structures  made of folding thin sheets. They have recently attracted growing interest in the area of materials science for its practical value in the design of medical devices, deployable space structures and robotics \cite{zhao2022twisting}. In this section we explore whether our discrete model \eqref{eq:minimization-pb-discrete} with regularization is able to capture such structures.

Motivated by \cite{plucinsky2016programming,plucinsky2018actuation,plucinsky2018patterning}, we embark on a computational study of nonisometric origami formed by LCNs. Unlike isometric origami shapes, that arise from pure folding and bending mechanisms, nonisometric origami result from satisfying the metric constraint \eqref{eq:metric-cons}, i.e. $\I[\vy]=g$, or equivalently minimizing the stretching energy $E_{str}$ of \eqref{eq:stretching-energy}; nonisometric origami thus exhibit stretching and shearing.
In each origami experiment below, we split the domain $\Omega$ into several subdomains by ``folding lines'' or ``creases'' $\gamma_i$ for $i=1,\ldots,N_{\gamma}$, and denote the set of them by $\Gamma$. We consider fitted meshes to $\Gamma$, i.e., $\Gamma\subset \bigcup_{e\in \mathcal{E}_h}e$; we also provide computations of a pyramidal structure in \cite{bouck2022NA} where the mesh is not fitted to $\Gamma$, confirming that the computed origami structures are not a result of mesh effect.

\medskip
{\it Compatibility}:
We assume the blueprinted director field $\vm$ to be constant in each subdomain. We say the set-up of nonisometric origami is compatible if
\begin{itemize}
\item $\vm$ satisfies the compatibility condition proposed in \cite[formula (6.3)]{plucinsky2018actuation}, namely 
\[
|\vm_{\gamma_i}^+\cdot\vt_{\gamma_i}|=|\vm_{\gamma_i}^-\cdot\vt_{\gamma_i}|,
\] 
for any $i=1,\ldots,N_{\gamma}$,
where $\vt_{\gamma_i}$ represents a unit tangent vector to $\gamma_i$ and $\vm_{\gamma_i}^\pm$ denote $\vm$ restricted to the two subdomains that share $\gamma_i$;
\item The actuation parameters $s,s_0$, and thus the parameter $\lambda$ defined in \eqref{eq:lambda}, are continuous across $\gamma_i$ for $i=1,\ldots,N_{\gamma}$.  
\end{itemize}
The compatibility condition means that the tangential component of the line field $\vm\otimes\vm$ and parameter $\lambda$ are continuous across $\gamma_i$. Therefore, since any equilibrium configuration satisfies the metric constraint \eqref{eq:metric-cons} with metric $g$ defined in \eqref{eq:target-metric}, such configuration sustains compatible stretching on both sides of a folding line $\gamma_i$.

\lucas{Moreover, we tune the regularization parameter $c_r:\E_h\to \mathbb{R}$ to incorporate the creases $\Gamma$ in the discrete energy $E_h$ in \eqref{eq:discrete-str}. }
We thus take, unless specified otherwise, regularization parameter $c_r=0$ along the folding lines $\Gamma$ and $c_r=100$ in the rest of domain, i.e., we rewrite \eqref{eq:E-reg} as
\begin{equation}\label{eq:reg+folding}
R_h[\vy_h] = c_{r}h\sum_{e\in\E_h\setminus\Gamma}\int_{e}|\jump{\nabla\vy_h}|^2.
\end{equation}
In fact, the zero regularization (no jumps of gradient) models a weakened (or damaged) material on creases \cite{bartels2022modeling}, and mathematically this allows for the formation of kinks. On the other hand, the large regularization in the subdomains serves as a mechanism to prevent bending. Consequently, equilibrium configurations of $E_h$ prefer flat surfaces and folds to meet the target metric \eqref{eq:metric-cons}. 

Furthermore, since the exact solutions of compatible origami can be represented as piecewise affine functions over the mesh $\Th$, as long as $\Th$ aligns with the creases, our discretization approach employing piecewise linear finite elements \eqref{eq:fem-space} reproduces such solutions exactly. In fact, discrete energies $E_h$ of computed solutions in Sections \ref{sec:table} and \ref{sec:cube} achieve values close to zero, reaching levels between $10^{-8}$ to $10^{-6}$, even with rather coarse meshes. Any remaining error solely stems from the optimization process outlined in Algorithm \ref{algo:GF_euler}.     

\subsubsection{Folding table and non-uniqueness.}\label{sec:table}
The set-up of blueprinted director field $\vm$, creases $\Gamma$ and subdomains of $\Omega=[0,1]\times[0,2]$ are displayed in Fig.\ref{fig:pyramid_comb} (left). We choose parameters
\[
s=0.1, ~ s_0=1; ~ h=1/64, ~ \tau=0.5, ~ \tol_1=10^{-6}, ~ \tol_2=10^{-10}.
\]
\vskip-0.3cm
\begin{figure}[htbp]
\includegraphics[width=12.cm]{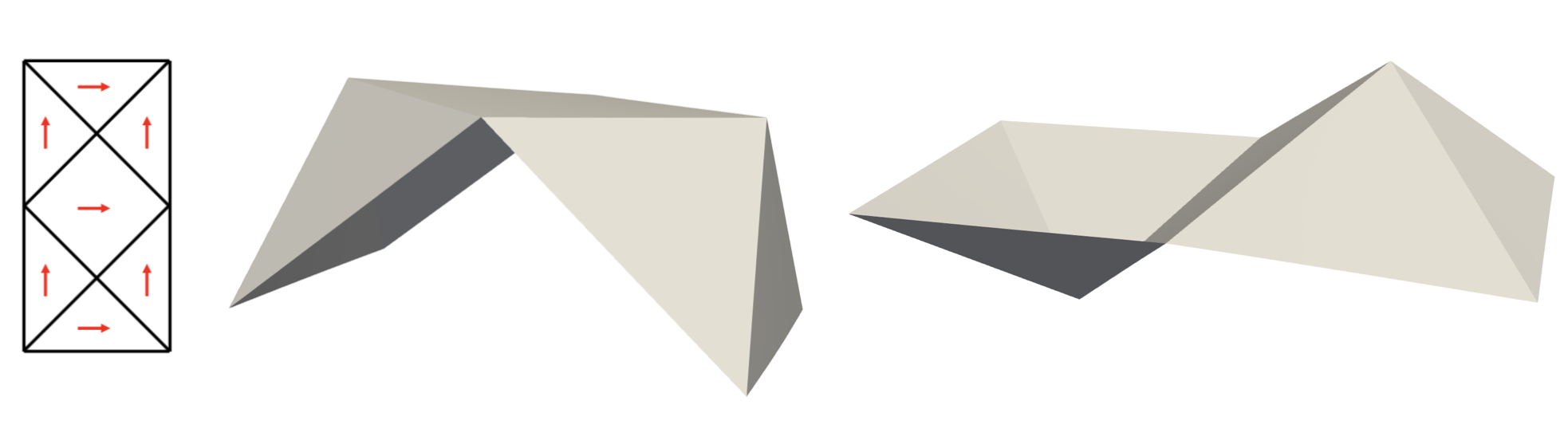}
\vskip-0.3cm
\caption{{\it Folding table}: Setting for origami in Subsection \ref{sec:table} (left). Final configurations for \textit{Case 1} (middle) and \textit{Case 2} (right) llustrate non-uniqueness of minimizers.}
\label{fig:pyramid_comb}
\end{figure}

\noindent
\textit{Case 1.} We use the initialization $\vy_h^0=I_h\vy^0$ with
\begin{equation}\label{eq:initialization_pyramid_comb}
\vy^0(x_1,x_2)~=~\big(x_1,x_2,0.8 \, x_1(1-x_1)x_2(2-x_2)\big)
\end{equation} 
and obtain $E_h[\vy_h^\infty]=8.27 \times 10^{-6}$ and $e_h^1[\vy^{\infty}_h]=4.5\times10^{-3}$.
  
\medskip\noindent
\textit{Case 2.} We use the initialization $\vy_h^0=I_h\vy^0$ with
\begin{equation}\label{eq:initialization_pyramid_1}
\vy^0(x_1,x_2)~=~\big(x_1,x_2,0.8 \, x_1(1-x_1)x_2(1-x_2)\big),
\end{equation}
and obtain $E_h[\vy_h^\infty]=5.97 \times 10^{-6}$ and $e_h^1[\vy^{\infty}_h]=3.7\times10^{-3}$.

Results for both cases are presented in Fig.\ref{fig:pyramid_comb}. We get final configurations consistent with the predicted and experimental shapes in \cite[Figure 5.2]{plucinsky2017deformations}. The two distinct final states correspond to different initial configurations. However, final energies $E_h[\vy_h^\infty]$ and metric deviations $e_h^1[\vy^{\infty}_h]$ are rather small, thereby indicating that both configurations are global minimizers with zero energy. Consequently, this provides an example of non-unique global minimizers due to the non-convex nature of the discrete model \eqref{eq:minimization-pb-discrete}. We refer to \cite{bouck2022NA} for further analytical and computational discussions. 

We emphasize that the gradient flow within Algorithm \ref{algo:GF_euler} may be viewed as relaxation dynamics: different initial states may evolve (or relax) into distinct minimizers. This process is natural in optimization, differential geometry and physics. However, our computational method and model are robust with respect to initialization in the following sense: minor adjustments to the amplitude of the perturbed initial states, such as modifying slightly the coefficient $0.8$ in the third component of \eqref{eq:initialization_pyramid_comb} or \eqref{eq:initialization_pyramid_1}, do not alter the final configuration.

Although a flat surface would be the most natural and straightforward initial state to implement, previous research on gradient flow for surface deformation problems \cite{bonito2022ldg} demonstrates that a planar initialization always results in a planar final configuration. To overcome this inherent limitation of general optimization approaches, we opt to perturb the flat plane $(x_1,x_2,0)$ slightly, as exemplified by \eqref{eq:initialization_pyramid_comb} and \eqref{eq:initialization_pyramid_1}, in the remaining part of this section. This enables us to generate \lucas{various} non-trivial configurations while still maintaining a natural physical meaning and a reasonable level of generality for initializations.
%

\subsubsection{Folding cube.}\label{sec:cube}
We now consider the design in \cite{plucinsky2016programming} whose folded shape is an origami cube. The set-up is given in Fig.\ref{fig:origami_cube} (left), where the domain $\Omega$ is a rhombus with vertices $(0,1),(0,2),(\sqrt{3},0),(\sqrt{3},1)$. We take a graded mesh such that $h=1/128$ near the creases and $h=1/32$ everywhere else. We choose the parameters
\[
s=-1/3, ~ s_0=1;  ~ \tau=0.1, ~ \tol_1=10^{-8}, ~ \tol_2=10^{-10},
\]
and use the initialization $\vy_h^0=I_h\vy^0$ with 
\begin{equation}\label{eq:initialization_cube}
\vy^0(x_1,x_2)~=~\Big(x_1,x_2,0.8x_1(x_1-\sqrt{3})\big(x_2+\frac{\sqrt{3}}{3}x_1-1\big)\big(x_2+\frac{\sqrt{3}}{3}x_1-2\big)\Big).
\end{equation} 
The evolution of our nonlinear gradient flow, regarded as relaxation dynamics, is displayed in Fig.~\ref{fig:origami_cube} (right). The final cube equilibrium configuration satisfies
\[
E_h[\vy_h^\infty]=7.34 \times 10^{-8}, \qquad e_h^1[\vy^{\infty}_h]=3.6\times10^{-4},
\]
and reveals that the cube is a discrete minimizer of \eqref{eq:minimization-pb-discrete}.
This also demonstrates the success of Algorithm~\ref{algo:GF_euler} in dealing with very large deformations accurately, as required to reach the cube configuration starting from an almost flat one.

\begin{figure}[htbp]  
\includegraphics[width=4.cm]{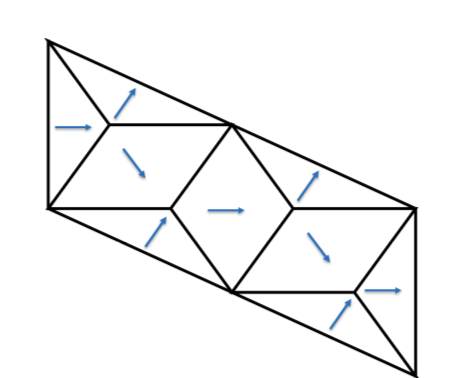}
\includegraphics[width=8.5cm]{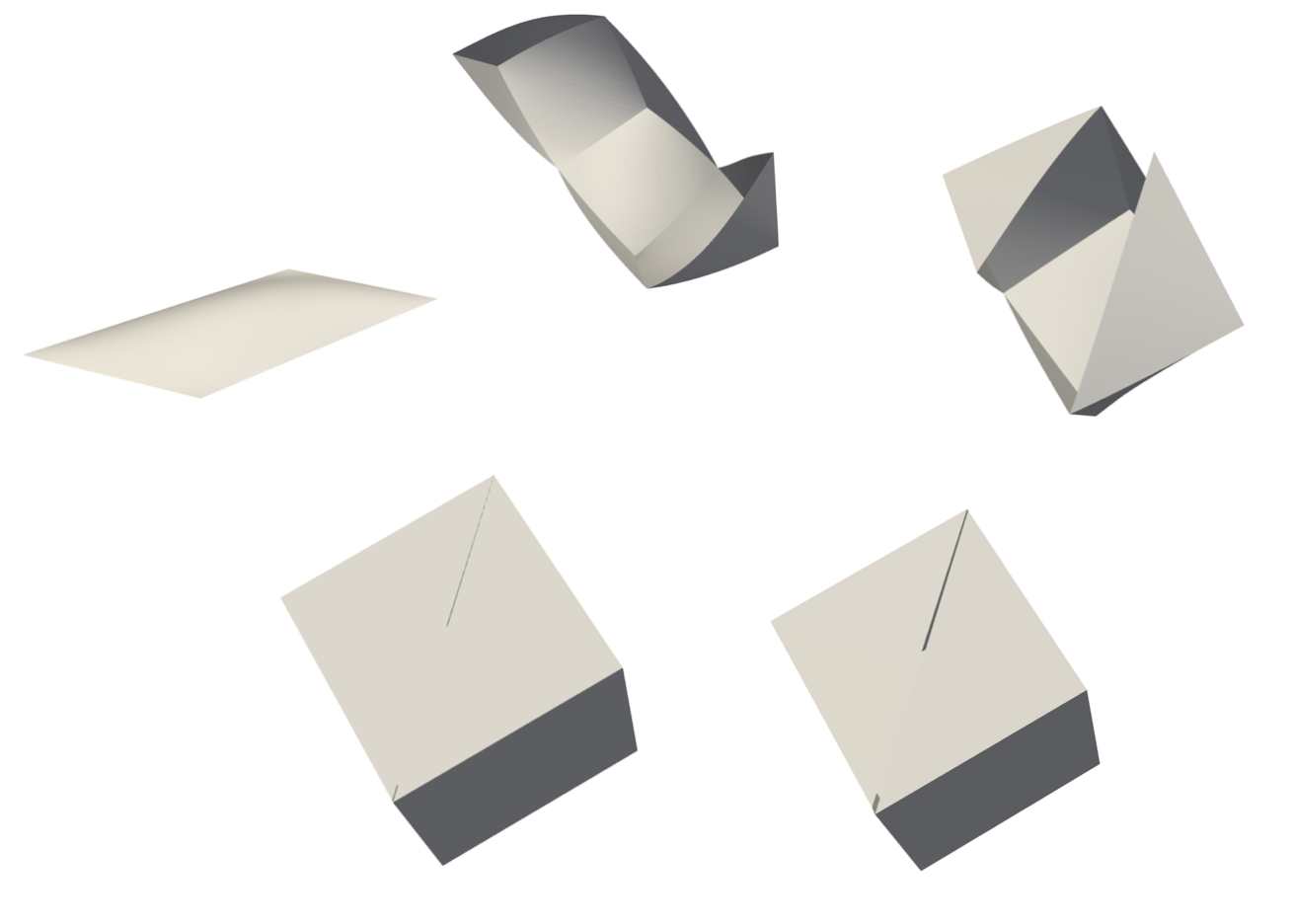}
\caption{{\it Folding cube}:  Rhombus $\Omega$, creases $\Gamma$ and director field $\vm_{\perp}$ (left). Gradient flow iterates $\vy_h^0,\vy_h^{110},\vy_h^{310},\vy_h^{1010}$ and final configuration $\vy_h^{1287}$ displayed clockwise (right).}
\label{fig:origami_cube}
\end{figure}
%

\subsection{New shapes: Incompatible nonisometric origami}\label{sec:incompatible-s}
In this section, we allow the physical quantities $\vm,s,s_0$ to violate the compatibility condition in Section \ref{sec:origami}, namely to be discontinuous across creases $\Gamma$. 
This entails a discontinuity of $g$ across $\Gamma$ and requires the material to sustain incompatible stretching on both sides of $\Gamma$. The presence of this discontinuity introduces inherent challenges, rendering manual construction of solutions through direct analysis of the metric $g$ exceedingly difficult, if not impossible. Consequently, a computational investigation assumes greater significance in assessing this problem.

Exploring incompatible origami seems to be a novel endeavor in the field of LCNs. Our computational investigation may provide valuable insights for future studies of such structures, including modeling, analysis, and laboratory experiments, along with their potential applications. As far as we know, discontinuous metrics are presently being studied within the framework of fully nonlinear non-Euclidean elasticity by Padilla et al. \cite{padilla2023preparation}. At the same time, motivated by our simulations below, laboratory experiments are being conducted on such structures by Bauman et al. \cite{bauman2023private}.
 
\subsubsection{Lifted square incompatible origami.}\label{sec:lifted-sq}
Let $\Omega:=[0,1]^2$ be the unit square and the creases and subdomains be as depicted in Fig.~\ref{fig:set-up-lifted-sq} (upper left). The latter are concentric squares with vertices connected by folding lines. We take $s=s_0=1$ in the inner square (ideally no deformation) and $s=0.1,s_0=1$ in the annulus between the two squares so that $\lambda<1$ in this region. This implies shrinking along the direction of the director field $\vm$, hence parallel to the sides, and stretching in the orthogonal direction $\vm_\perp$.
In the inner region, the preferred length of a side, as determined by $g=\Id_2$, is $1$. However, in the outer region, $g$ prescribes a preferred length of approximately $0.82$, because each side shrinks along $\vm$ by a ratio $\lambda=(\frac{s+1}{s_0+1})^{1/3}\approx0.82$. This discrepancy is visually illustrated in Fig.~\ref{fig:set-up-lifted-sq} (upper right). It is important to emphasize that the model being considered does not account for material fracture. Consequently, such a mismatch in preferred lengths is expected to result in the buckling of the inner square.

\begin{figure}[h!]
\includegraphics[width=6.5cm]{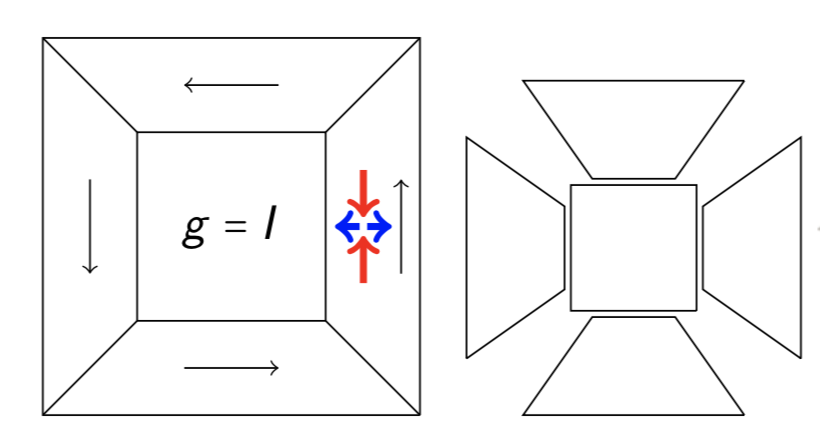}
\includegraphics[width=9.cm]{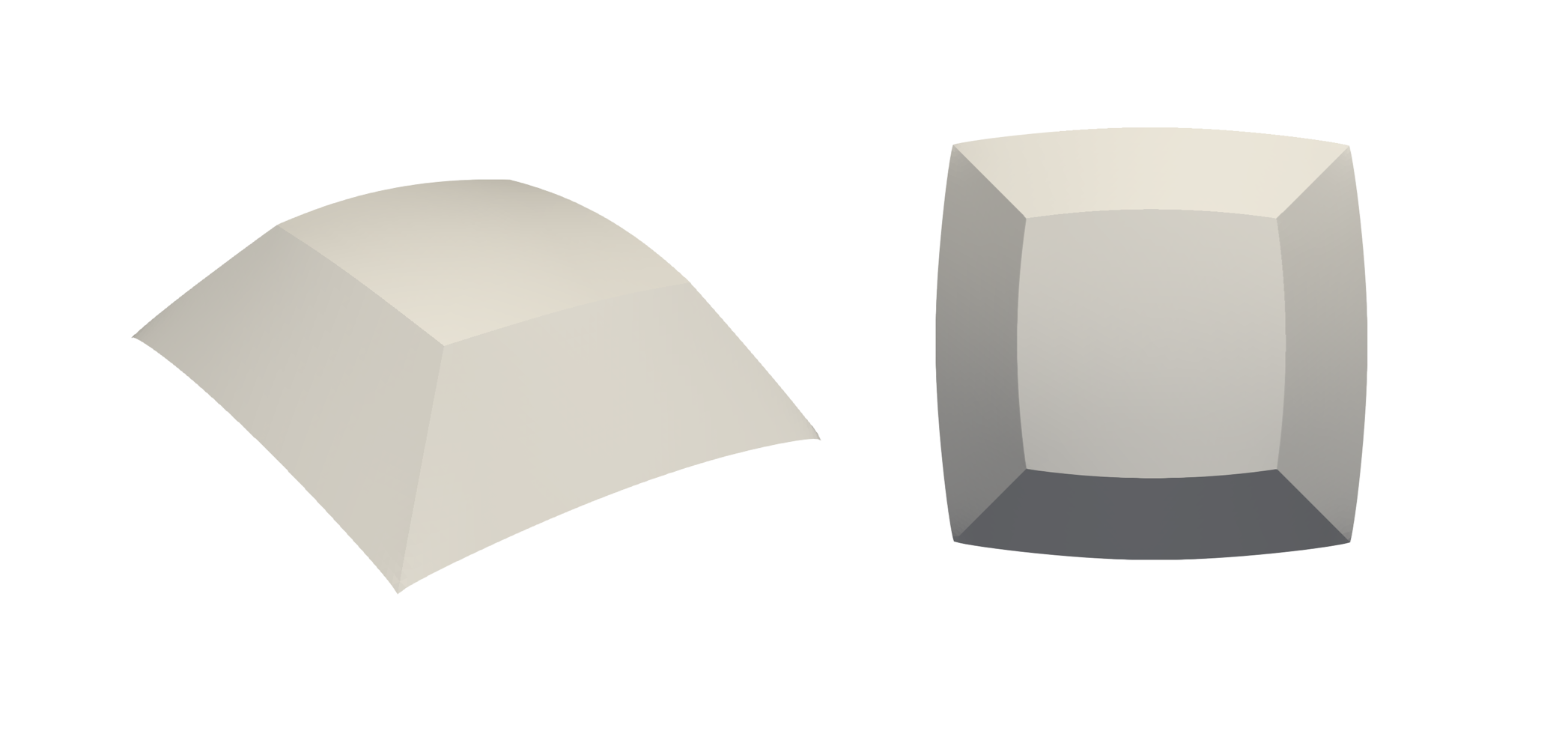}
\vskip-0.7cm
\caption{{\it Lifted square incompatible origami}: Upper left: Lines indicate creases and black arrows indicate the blueprinted director field $\vm$ in regions where $\lambda<1$ (red for shrinking direction and blue for stretching). The inner square has $\lambda=1$ (no internal deformation), i.e. $g=\Id_2$. 
Upper right: A schematic illustration of incompatible stretching caused by discontinuous $g$.
Bottom: Two views of equilibrium configuration show that buckling takes place to accommodate the lack of data compatibility.}
\label{fig:set-up-lifted-sq}
\end{figure}

We use the initialization \eqref{eq:initialization_pyramid_1} and choose the parameters to be
\[
h=1/64, ~ \tau=0.1, ~ \tol_1=10^{-6}, ~ \tol_2=10^{-10}.
\]
We consider the modified regularization \eqref{eq:reg+folding} with $c_r=0$ \lucas{along $\Gamma$, the folding line,} and $c_r=100$ in the rest of the domain. We plot two views of the final configuration in Fig.~\ref{fig:set-up-lifted-sq} (bottom). Despite the inner region has no internally-induced deformation, it shrinks and lifts up out of plane to accommodate  the change of the outer region.

To explore the asymptotic behavior of $\vy_h^\infty$ as $h\to0$ we run a series of experiments reported in Fig.~\ref{fig:conv_h}. We see that the energy $E_h[\vy_h^\infty]$ is $\calO(10^{-2})$ and decreases but not quadratically; in fact, it seems that it stabilizes to a positive value. Recall that the quadratic scaling $E_h[\vy_h]\le \Lambda h^2$ of \eqref{eq:energy-scaling} in Theorem \ref{T:convergence} (convergence of discrete minimizers) leads to strong convergence in $H^1(\Omega;\R^3)$ to a piecewise $H^2$-limit $\vy^*$ such that $E_{str,\Gamma}[\vy^*]=0$ and $\I[\vy^*]=g$. This suggests that a discontinuous $g$ may not admit an isometric immersion, at least not with the regularity stated in Theorem \ref{T:convergence}. This in turn contrasts with the compatible origami shapes in Sections \ref{sec:table} and \ref{sec:cube} for which the final energies are $\calO(10^{-6})$ and $\calO(10^{-8})$, thus making it \shuo{plausible} that \lucas{the asymptotic limit $\vy^*$ satisfies} $E_{str,\Gamma}[\vy^*]=0$ and $\I[\vy^*]=g$ in view of Corollary \ref{cor:minimizers} (immersions of $g$ are minimizers with vanishing energy). Moreover, such piecewise affine exact solutions obviously satisfy the piecewise $H^2$ regularity. When comparing the scenarios of compatible and incompatible origami, it becomes highly likely that the non-existence of isometric immersion for the latter can be attributed to the mismatch of expected side lengths on each side of the crease.
\begin{figure}[htbp]
\begin{minipage}{.4\textwidth}
\begin{tabular}{|c|c|c|}
\hline
h & $ E_h[\vy^{\infty}_h] $ & $e_h^1[\vy^{\infty}_h]$  \\
\hline\hline
 1/32  & 0.0277   & 0.177   \\ \hline
 1/64  & 0.0127   & 0.103   \\ \hline
 1/128 & 0.00751  & 0.0754  \\ \hline
 1/256 & 0.00525  & 0.0626  \\ \hline
\end{tabular}
\end{minipage}
\begin{minipage}{.5\textwidth}
\begin{center}
 \includegraphics[width=7.5cm,height=2.0cm]{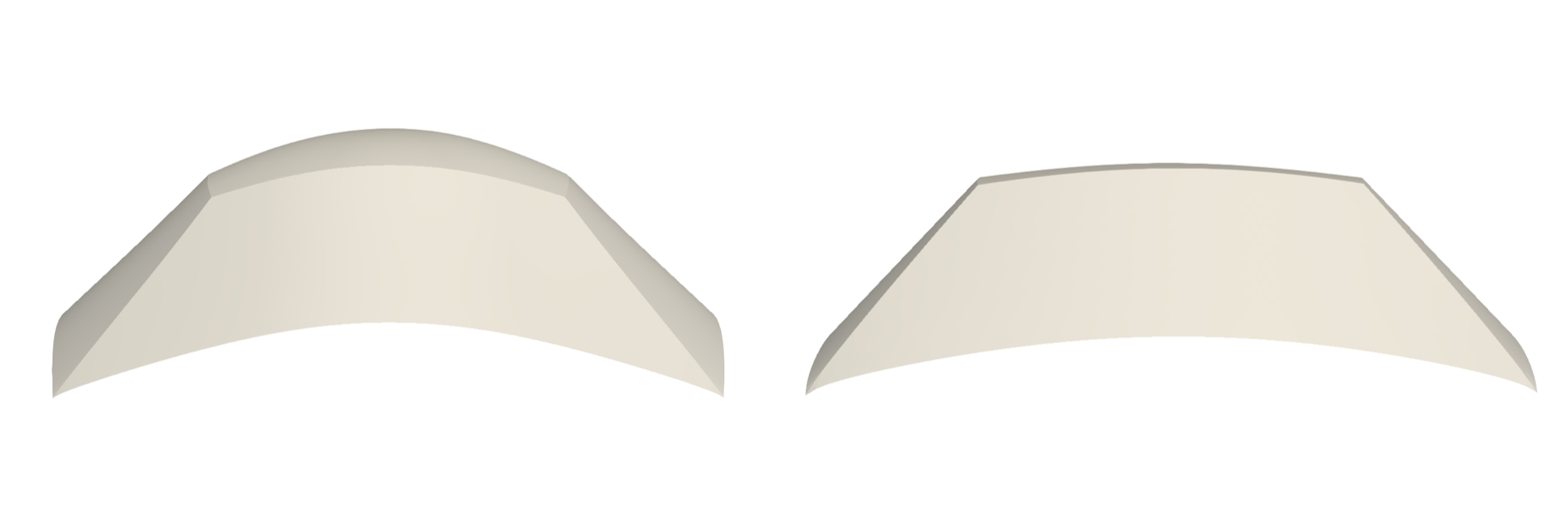}   
\end{center}
\end{minipage}
\hfill
\vspace{0.3cm}
\caption{{\it Lifted square incompatible origami}: The table shows a monotone decrease of discrete energy $E_h[\vy^{\infty}_h]$ and metric defect $e_h^1[\vy^{\infty}_h]$ in terms of $h$, which stabilizes to a positive value. Side views of the deformations for $h=1/128$ (middle) and $h=1/64$ (right). The buckling is more pronounced for smaller $h$.} \label{fig:conv_h}
\end{figure}

Fig.~\ref{fig:conv_h} also reveals that buckling becomes more pronounced as the meshsize $h$ decreases. One possible explanation for this phenomenon is that the effect of regularization reduces as $h$ becomes smaller. We can interpret $R_h[\vy_h]$ as a numerical mechanism that selects equilibrium configurations in the limit as $h$ approaches zero. 

To validate the impact of regularization, we conduct additional experiments with the same parameters, except that the regularization parameter becomes $c_r=1$ or $c_r=0$ away from creases $\Gamma$; see Fig.~\ref{fig:set-up-lifted-sq-2} for results. In contrast to the case $c_r=100$ in $\Omega\setminus\Gamma$, depicted in Fig.~\ref{fig:set-up-lifted-sq}, it is evident that a smaller regularization parameter leads to more pronounced buckling. Furthermore, the use of $R_h$ with $c_r=1$ eliminates the observed wrinkles seen in the case of $c_r=0$. This reinforces the impact of regularization, as discussed in Section \ref{sec:eff-reg}.

\begin{figure}[h!]
\includegraphics[width=10.cm]{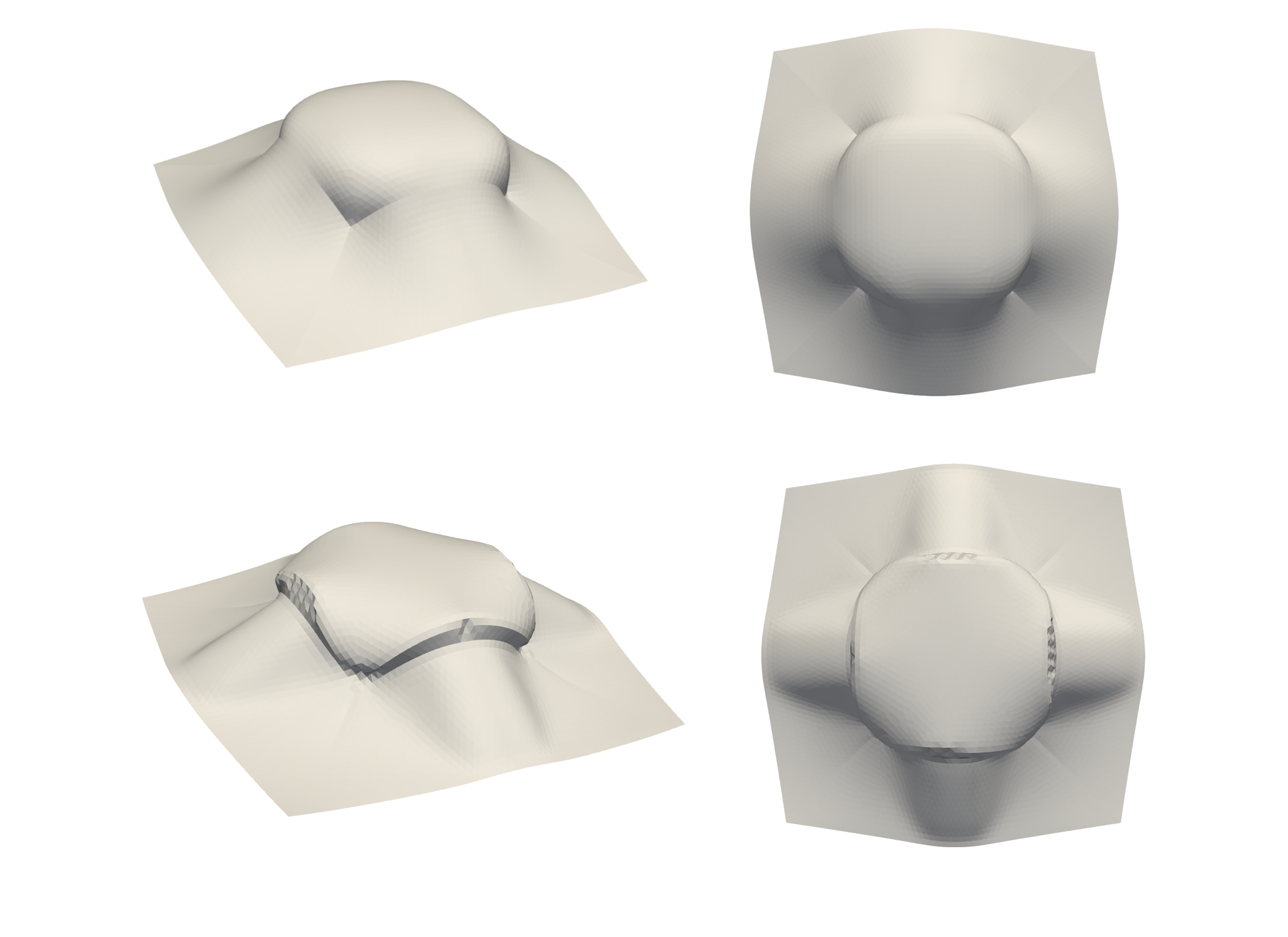}
\vskip-0.5cm  
\caption{{\it Incompatible square origami}: Two views of the final configurations with regularization parameter $c_r=1$ (top) and $c_r=0$ (bottom) away from the creases. Wrinkling occurs for $c_r=0$.}
\label{fig:set-up-lifted-sq-2}
\end{figure}

We now discuss the behavior of $e^1_h[\vy_h^{\infty}]$ and $E_h[\vy_h^{\infty}]$ for various $c_r$. We obtain  
\[
c_r=100, ~ 1, ~0 \Rightarrow e^1_h[\vy_h^{\infty}]\approx0.1,~ 0.035,~ 0.01;~ E_h[\vy_h^{\infty}]\approx0.012,~ 0.0026,~ 8.9\times10^{-4}. 
\]
We note that as $c_r$ increases, the corresponding $e^1_h[\vy_h^\infty]$ and $E_h[\vy_h^\infty]$ increase as well. The minimizing configuration cannot keep the stretching energy low without \lucas{increasing the bending energy}. We may conclude that the regularization \eqref{eq:E-reg} serves as a mechanism to select minimizers of stretching energy for {compatible} origami, while a competition between stretching and bending energies determines the final shape for {incompatible} origami. This process as $h\to0$ produces a candidate minimizing sequence for $E_{str}$ where the length scale of oscillations \lucas{is determined by a trade-off between minimizing $E_{str}$ and minimizing the bending energy.}
%

\subsubsection{Lifted square origami without creases.}\label{sec:lifted-sq-no-creases}

We proceed to investigate again \shuo{the lifted square example, considering the blueprinted director field with $\lambda=1$ in the inner square as} depicted in Fig.~\ref{fig:set-up-lifted-sq} (upper left), but with an empty folding set $\Gamma = \emptyset$. We set $s=s_0=1$ within the inner square and $s=-0.3,s_0=1$ within the square annulus, ensuring that $\lambda<1$ in this exterior region. This computation is motivated by the fact that creasing the material may be impractical in laboratory experiments. Additionally, we plan to investigate the influence of the size of the inner square relative to the outer domain on the resulting deformation.

In Fig.~\ref{fig:lifted-square-exp-compare} (top row), we display the output of Algorithm~\ref{algo:GF_euler} with 
\shuo{
\[
\textrm{tol}_1=10^{-6}, ~ \textrm{tol}_2=10^{-10}, ~ h=1/64, ~ c_r=0.2.
\]
}
The aspect ratios between side lengths for inner and outer squares are $0.3$, $0.5$, and $0.7$ from left to right on top of Fig.~\ref{fig:lifted-square-exp-compare}, \shuo{whereas the corresponding parameters $\tau$ are $\tau=0.3,0.4,0.5$.} The initialization is chosen to be the same as in Section \ref{sec:lifted-sq}.

In Fig.~\ref{fig:lifted-square-exp-compare}, we also present a comparison between our computational results and ongoing laboratory experiments \cite{bauman2023private}. In the latter, the outer square side length is 16mm, while the inner square exhibits varying side lengths of 4.8mm, 8mm, and 11.2mm from left to right, as depicted in Fig.~\ref{fig:lifted-square-exp-compare} (bottom). The sample used in the experiments has a thickness of 0.45mm. We observe a remarkable agreement between our computational predictions and the experimental outcomes. Both exhibit concave regions, or wrinkling, near the edges of the inner square region. It is worth mentioning that this wrinkling structure occurs at a much larger length scale than those shown in Fig.~\ref{fig:set-up-lifted-sq-2}. This phenomenon can be again attributed to the mismatch of expected length changes across the boundary of the inner square.

\begin{figure}[htbp]
\begin{minipage}{.3\textwidth}
\begin{center}
 \includegraphics[width=\textwidth]{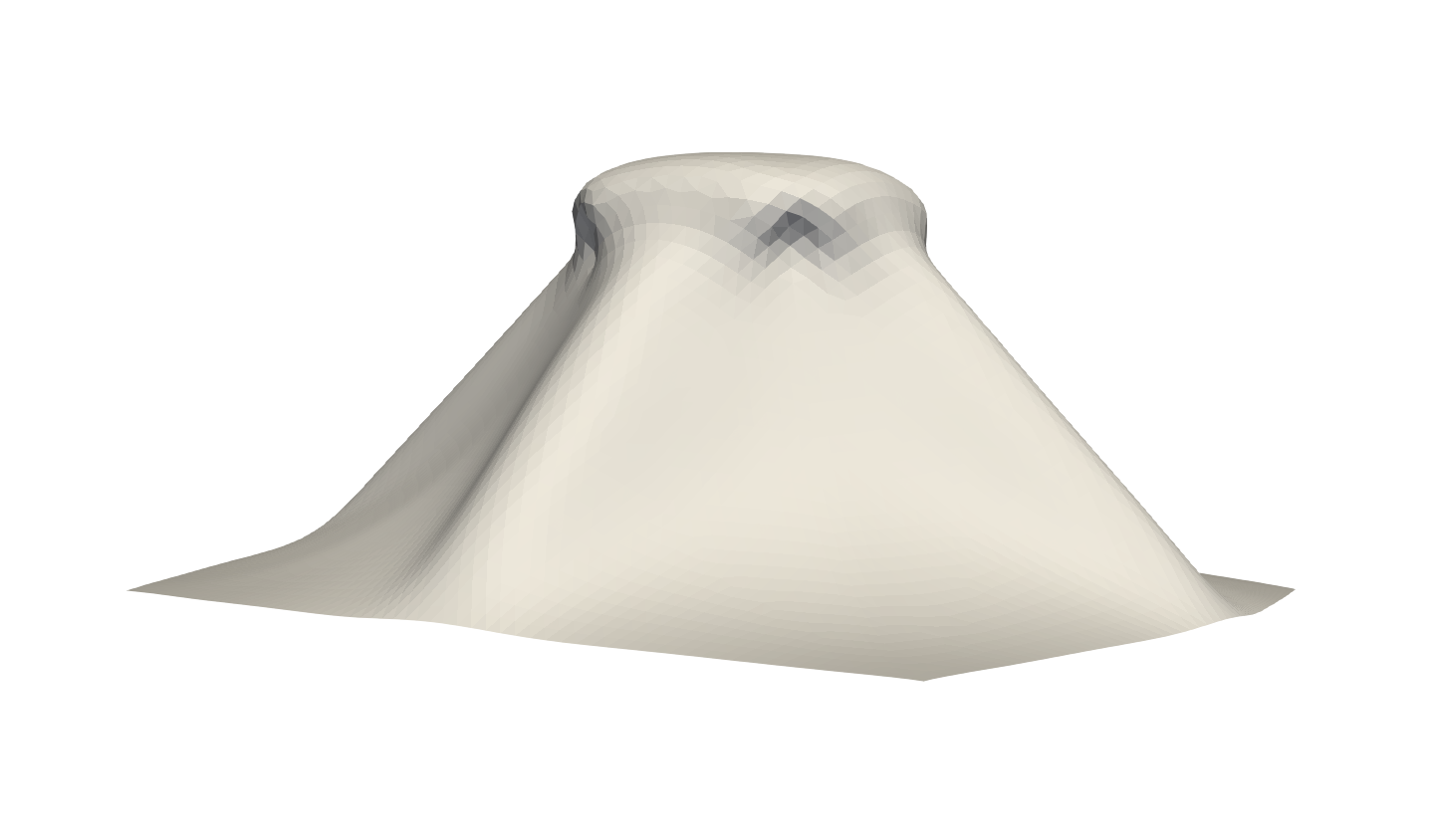}  \\
 \includegraphics[width=\textwidth]{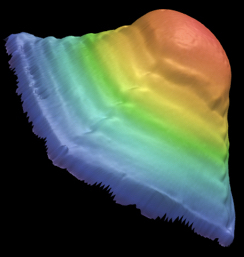}   
\end{center}
\end{minipage}
\begin{minipage}{.3\textwidth}
\begin{center}
 \includegraphics[width=\textwidth]{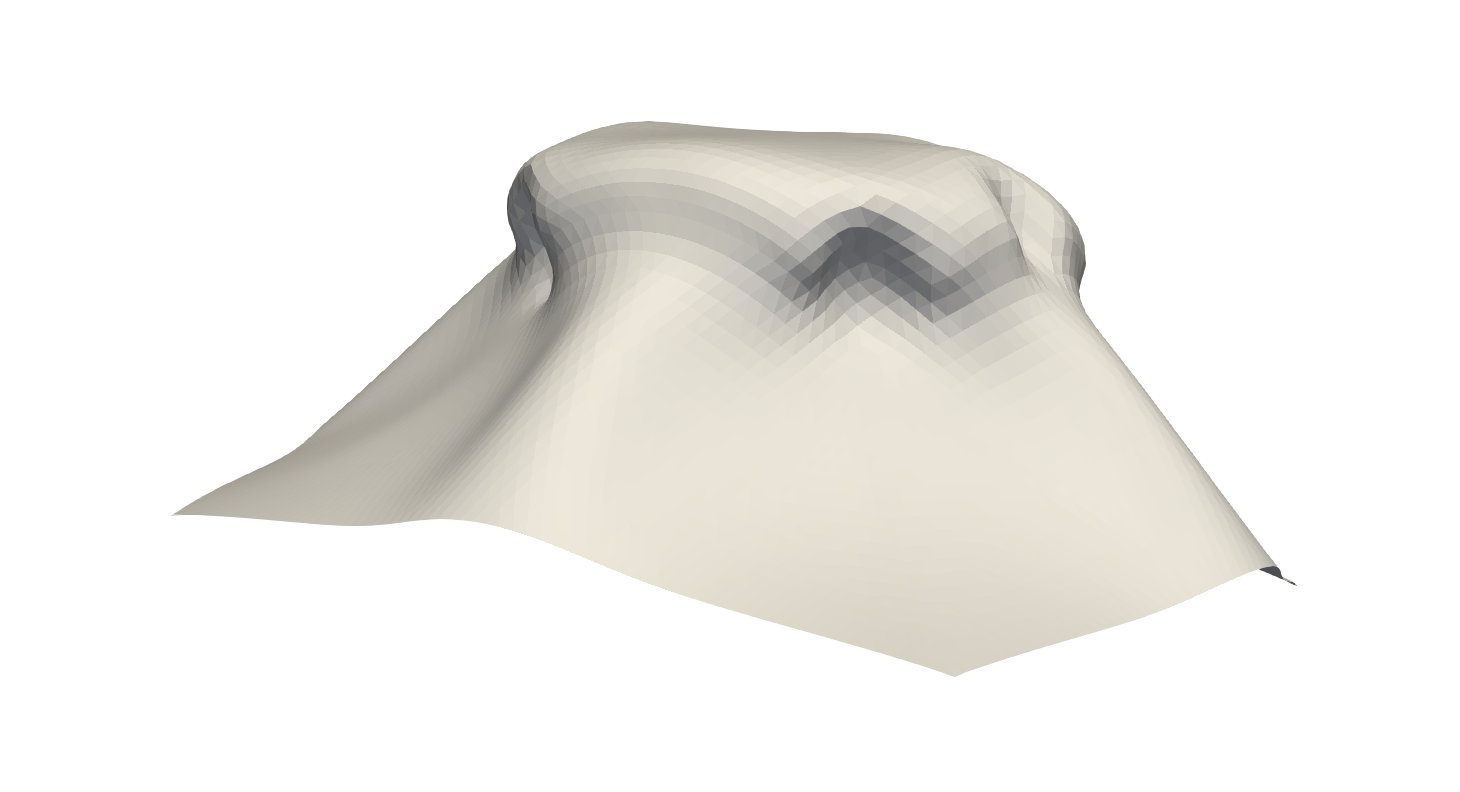}  \\
 \includegraphics[width=\textwidth]{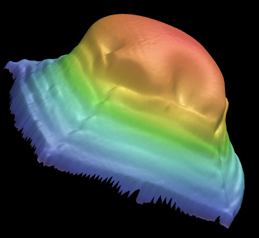}   
\end{center}
\end{minipage}
\begin{minipage}{.3\textwidth}
\begin{center}
 \includegraphics[width=\textwidth]{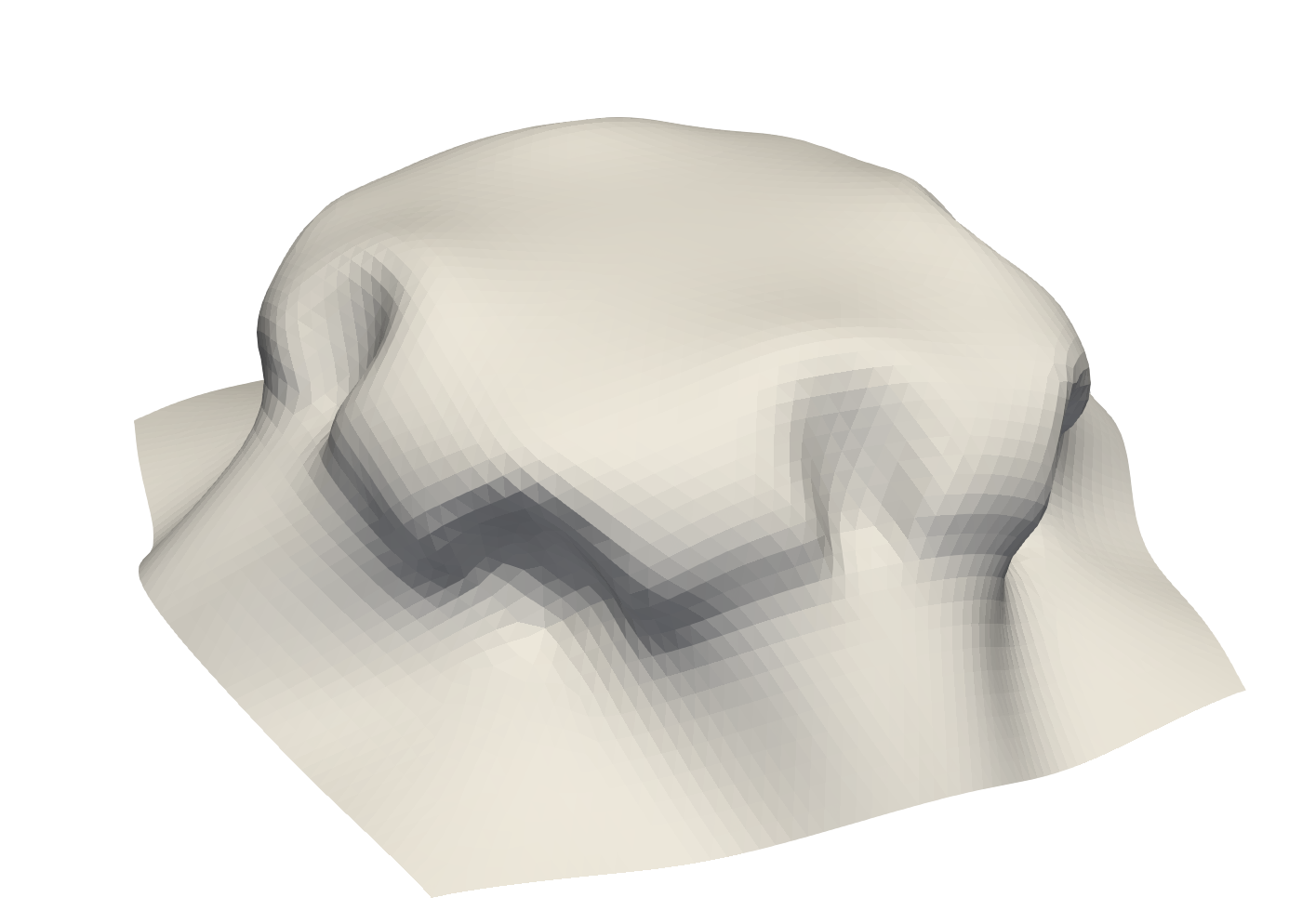}  \\
 \includegraphics[width=\textwidth]{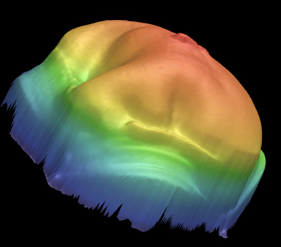}   
\end{center} 
\end{minipage}
\hfill
\vspace{0.3cm}
\caption{{\it Lifted incompatible square origami}: Computed solutions with $c_r=0.2$ (top) and laboratory configurations by Bauman et al. \cite{bauman2023private} (bottom) for lifted squares without creases. The size of the inner square relative to the outer domain varies from left to right. The images in the bottom row are 3-D scans of experimental samples heated to 150\textdegree C generated with an optical profilometer.} \label{fig:lifted-square-exp-compare}
\end{figure}

The practical interest in lifted squares stems from their potential to exert larger lifting forces  than cones \cite{ware2015voxelated}. Preliminary lab experiments conducted by Baumann and White \cite{bauman2023private} \lucas{suggest that the configuration on the left in Fig.~\ref{fig:lifted-square-exp-compare} outperforms
the others.} This seems to be related to the relative size of wrinkles which is more pronounced for the middle and right configurations in Fig.~\ref{fig:lifted-square-exp-compare}.

We explore this observation next, upon considering the current choice of $s$ and $s_0$ which leads to $\lambda\approx0.7$.
The interface length from the outer region undergoes a fixed contraction dictated by $\lambda$ whereas the inner length remains fixed. Letting $\ell$ denote the side length of the inner square, this implies that the total length mismatch across the interface is given by $4\ell(1-\lambda)$. Consequently, if the inner side length $\ell$ is larger, the length of the mismatch that needs to be accommodated also increases and so does the size of wrinkles. This provides a plausible explanation for the observed phenomenon.

We further compare the outcomes presented in Figs.~\ref{fig:lifted-square-exp-compare} and \ref{fig:set-up-lifted-sq-2}, where the folding set is defined as $\Gamma = \emptyset$ and $\Gamma \neq \emptyset$, respectively. Both scenarios involve a disparity in preferred arc lengths along the inner square boundary, yet the resulting configurations adapt to this mismatch in different ways. Notably, in the case where $\Gamma \neq \emptyset$, the inner square bulges upward as the primary behavior. Conversely, in the case where $\Gamma = \emptyset$, the material begins to form large wrinkles, or concave regions, near the boundary of the inner square. This contrasting behavior can be attributed to the penalty imposed on $\jump{\nabla\vy_h}$ along the inner square when $\Gamma = \emptyset$. The presence of a significant jump in $\jump{\nabla\vy_h}$ across the metric discontinuity becomes energetically costly, and so does a pronounced buckling behavior. Consequently, the resulting configuration  in Fig.~\ref{fig:lifted-square-exp-compare} mitigates this energy increase by introducing wrinkling across the interface to accommodate the length mismatch. It is worth noting that wrinkling is observed in analogous situations, such as the uniform compression of thin sheets \cite{conti2005self}.

We also include in Fig.~\ref{fig:lifted-square-plot-data} plots of the metric defect $|\I(\vy^{\infty}_h)-g|(\vx)$ and the $l_1$ norm of the stress tensor on $\Omega$. The former is an elementwise constant function, while the latter is the sum of the absolute values of each component. The stress tensor is obtained by calculating variational derivatives of the energy density function, but we omit the detailed calculations here. The plots in Fig.~\ref{fig:lifted-square-plot-data} correspond to the solution of the lifted square with larger inner square, specifically the third image in the top row of Fig.~\ref{fig:lifted-square-exp-compare}. It is evident that both the metric defect and the stress are concentrated along the interfaces, which are the locations where the metric exhibits a discontinuity. This phenomenon can be plausibly attributed to the inconsistent stretching experienced on both sides of the interfaces in incompatible origami configurations.

\begin{figure}[htbp]
\includegraphics[width=.7\textwidth]{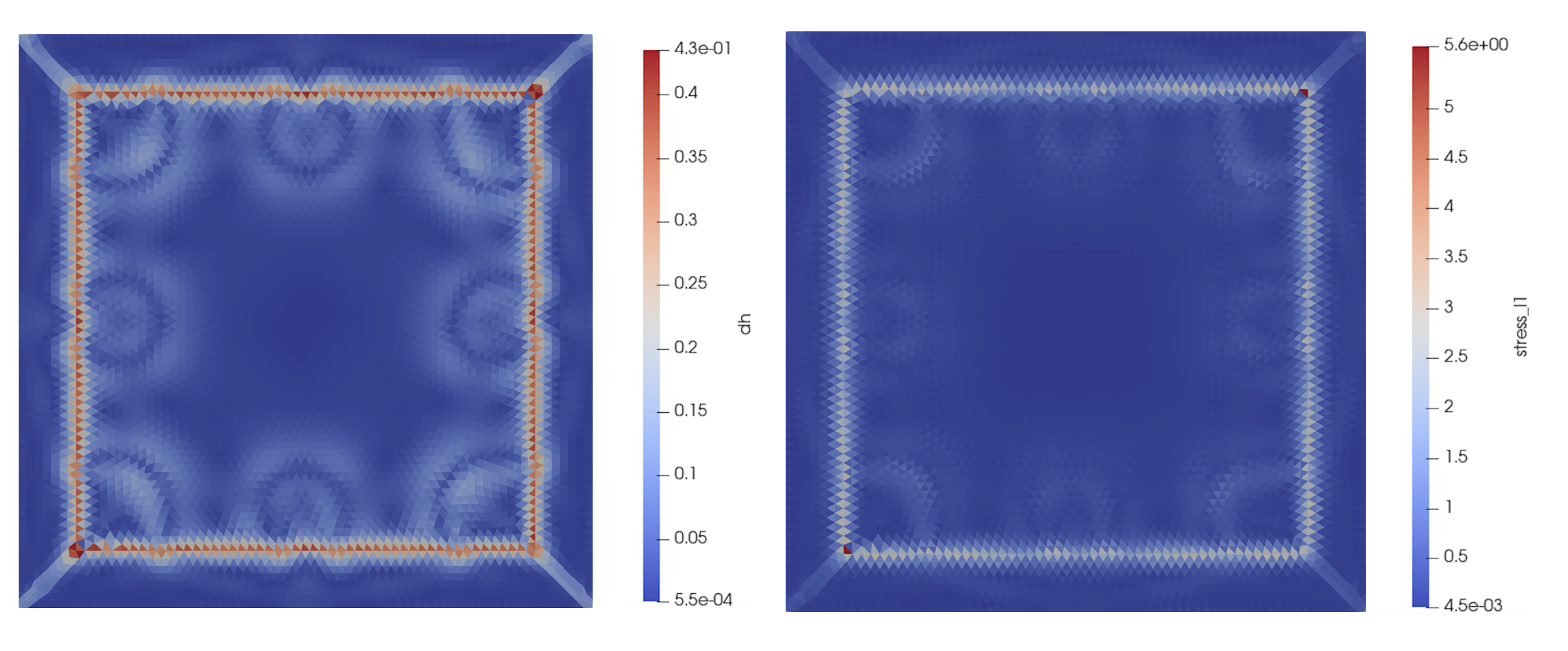}
\caption{{\it Metric defect and stress.} Left: plot of metric deviations on $\Omega$. Right: plot of the $l_1$ norm of the stress tensor on $\Omega$. In the visualization, the color dark blue represents values close to zero, while the color red indicates larger positive values\shuo{; see legend.}} 
\label{fig:lifted-square-plot-data}
\end{figure}
%

\subsubsection{Generalizations: lifted M-origami}\label{sec:lifted-M}
We explain now how to exploit the idea in Subsection \ref{sec:lifted-sq} as a building block to design lifted configurations of any polygonal shapes. In this case, our strategy entails penalizing bending by setting a high value of $c_r=100$, while also permitting the presence of creases in the material.
In fact, for any polygonal subdomain $P\subset\Omega:=[0,1]^2$ with $\textrm{dist}(P,\partial \Omega)>0$, we can always construct a dilation $P'$ of $P$ so that it is ``concentric'' with $P$ and $P\subset P'$ with $\textrm{dist}(P',\partial \Omega)>0$. Then we further connect corresponding vertices of $P$ and $P'$ with folding lines, and also let all the sides of $\partial P$ and $\partial P'$ be creases. We finally take $\vm$ parallel to the sides of $\partial P$ and $\lambda<1$   in $P'\setminus P$, while $\lambda=1$ in $P$ and $\Omega\setminus P'$. The discontinuity of $\lambda$ across creases implies again $E_{str}[\vy]>0$ for all $\vy\in H^1(\Omega;\R^3)$.

We apply this procedure to an M-shaped subdomain, as shown in \lucas{Fig.~\ref{fig:lifted-M} (left)}. We choose all the parameters, \lucas{unless otherwise specified}, to be the same as in Subsection \ref{sec:lifted-sq}. In particular $s=-0.5$ inside the M-annulus region while $s=1$ in the rest of domain. We use a graded mesh of size $h=1/256$ near the creases and otherwise $h=1/32$. The initialization is also taken as \eqref{eq:initialization_pyramid_1}.

We display the computed solution in Fig.~\ref{fig:lifted-M}, which is the desired lifted M-shape. We stress that the background and solid M are not completely flat due to the same buckling effect already discussed in Section \ref{sec:lifted-sq}. However, this effect is not so pronounced because the shrinking layer is thin relative to the rest of the M and background. We emphasize that the current procedure is different from the construction of lifted surfaces in Section \ref{sec:lifted-surface}. The latter requires $|\nabla\phi|=\sqrt{\lambda^3-1}$ a.e. in $\Omega$, which makes it harder to implement; recall the discussion after \eqref{eq:metric-lifted-2}.  

\begin{figure}[htbp]
\begin{minipage}{.35\textwidth}

\begin{tikzpicture}[scale=4.5]

    \draw (.837,.28) -- (.755,.77) -- (.655,.77) -- (.5,.46) -- (.345,.77) -- (.245,.77)--(.163,.28)--(.263,.28)--(.3225,.635) -- (.5,.28) -- (.6775,.635) -- (.737,.28) -- (.837,.28);
    \draw (.9,.2)-- (.8,.8) -- (.6,.8)-- (.5,.6)--(.4, .8)--(.2, .8)--(.1,.2)--(.3,.2) -- (.35,.5)--(.5,.2)-- (.65,0.5)--(.7,.2)--(.9, .2);
%

    \draw (.837,.28) --(.9,.2);
    \draw (.755,.77)--(.8,.8);
    \draw (.655,.77) -- (.6,.8);
    \draw (.5,.46)--(.5,.6);
    \draw  (.345,.77) -- (.4, .8);
    \draw (.245,.77) -- (.2, .8);
    \draw (.163,.28) -- (.1,.2);
    \draw (.263,.28) -- (.3,.2);
    \draw (.3225,.635) -- (.35,.5);
    \draw (.5,.28) -- (.5,.2);
    \draw (.6775,.635)  -- (.65,0.5);
    \draw (.737,.28) -- (.7,.2);
    
%


    \draw [-to] (0.83266666666,0.44) -- (0.80533333333,0.60333333333);  
     \draw [-to] (.755,.785) -- (.655,.785);
     
     \draw [-to] (.585,.7) -- (.515,.56);
     \draw [to-] (.42,.7) -- (.48,.56);
      \draw [-to] (.345,.785) -- (.245,.785);
      
      \draw [-to] (0.195,0.61)--(0.17,0.45); 
      \draw [-to] (.163,.23)--(.263,.23);
      
      \draw [-to] (.29,0.33)--(.3125,0.45);
	\draw [to-] (.455,0.33)--(.395,0.45);
      \draw [-to] (.545,0.33)--(.605,0.45);
	\draw [to-] (0.71,0.33)--(0.6875,0.45);
      \draw[-to]  (.737,.23) -- (.837,.23);

\end{tikzpicture}
\end{minipage}\hfill
\begin{minipage}{.6\textwidth}
  \includegraphics[width=\textwidth]{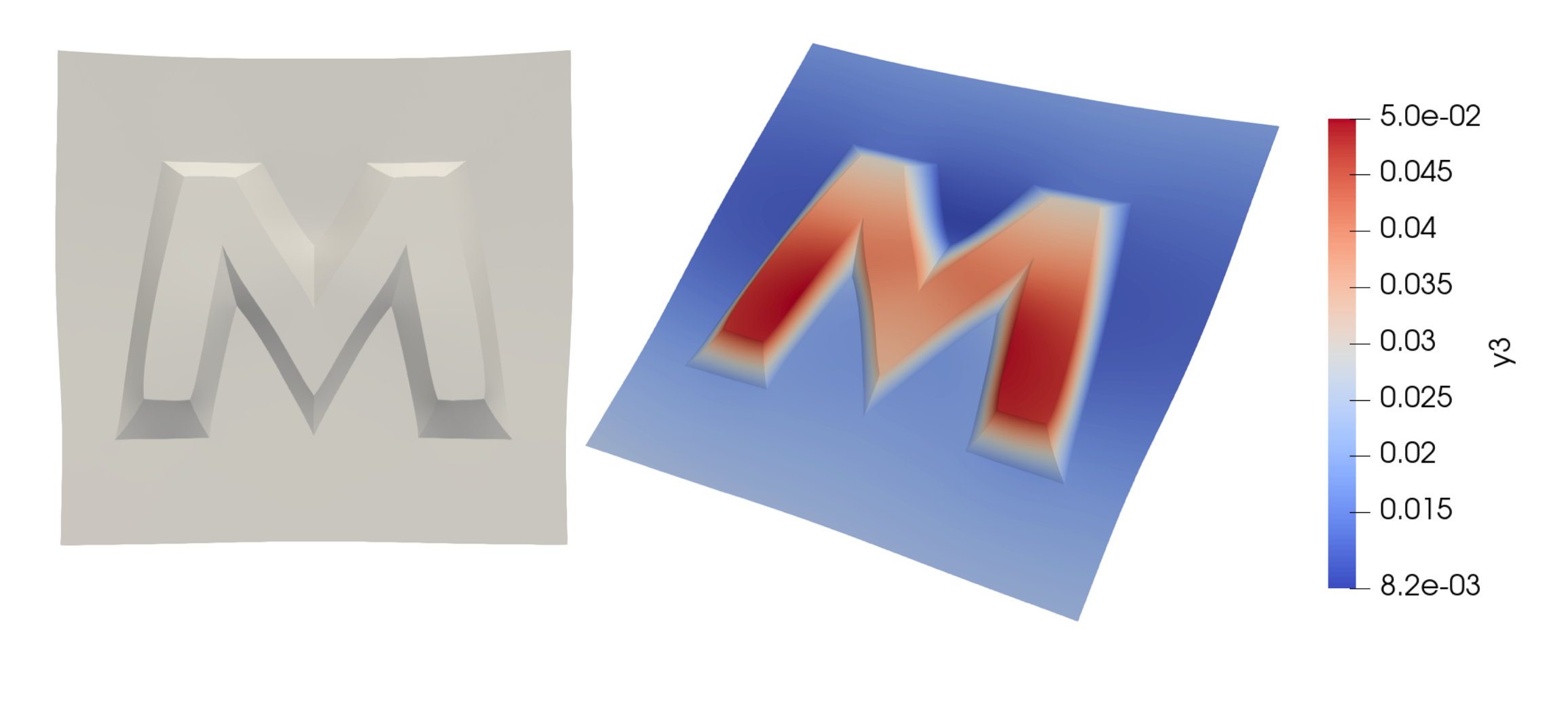}
  \end{minipage}
\vskip-0.7cm  
\caption{{\it Lifted M-origami}: (Left) Lines indicate creases and arrows indicate the blueprinted director field $\vm$ in regions where $\lambda<1$, whereas $\lambda=1$ within and outside the M. (Center and Right) Two views of final equilibrium configuration. Color on the right picture represents the value of $y_3$ and shows that the solid M and background are not completely flat.}
\label{fig:lifted-M}
\end{figure}

\subsubsection{Incompatible origami with cuts}\label{sec:lifted-cuts}

In this section, we examine the same configuration of the director field $\vm$ as depicted in Fig.~\ref{fig:set-up-lifted-sq} (upper left). 
\lucas{We now introduce cuts to the boundaries of the inner squares, where the metric experiences discontinuity. This additional set of numerical experiments is inspired by kirigami \cite{sussman2015algorithmic}, which provide new methods for shape-programming beyond origami.}
 
The domains of the experiments considered in this section are illustrated in Fig.~\ref{fig:cuts-domain}. 
Similar to previous investigations, we utilize an empty folding set $\Gamma = \emptyset$. The \shuo{numerical} parameters chosen for this experiment \shuo{are: 
\[
 h=1/32, ~ \tau=0.15, ~ \tol_1=10^{-6}, ~ \tol_2=10^{-10}, c_r=0.2.
\]
Moreover, $s=s_0=1$ within the inner square, and $s=-0.3,s_0=1$ within the annulus between the two squares.} The ratio between the side length of inner and outer square is $1/2$. The initialization is chosen to be the same as in Section \ref{sec:lifted-sq}.

\begin{figure}[htbp]
  \includegraphics[width=.3\textwidth]{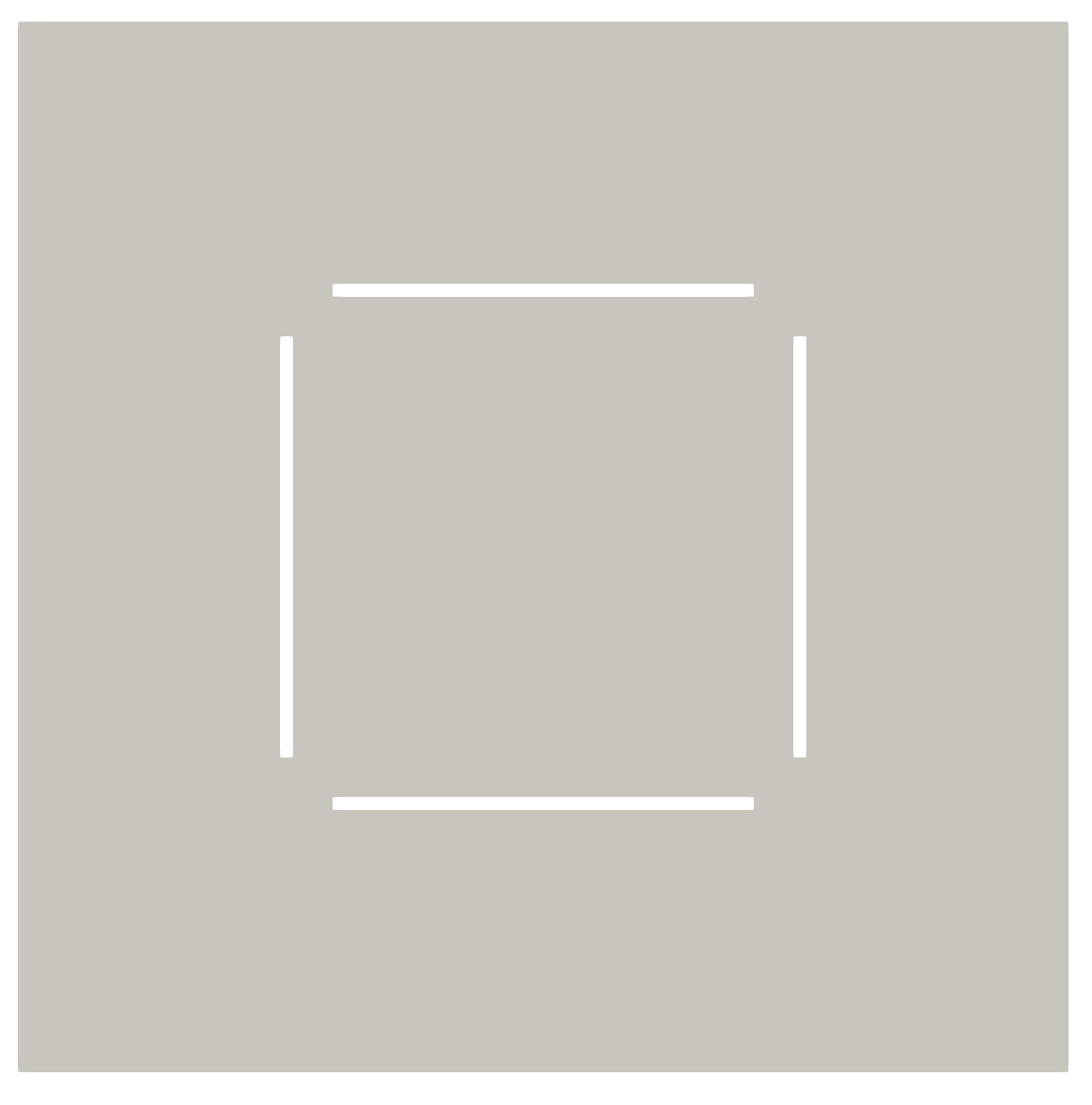} 
  \includegraphics[width=.315\textwidth]{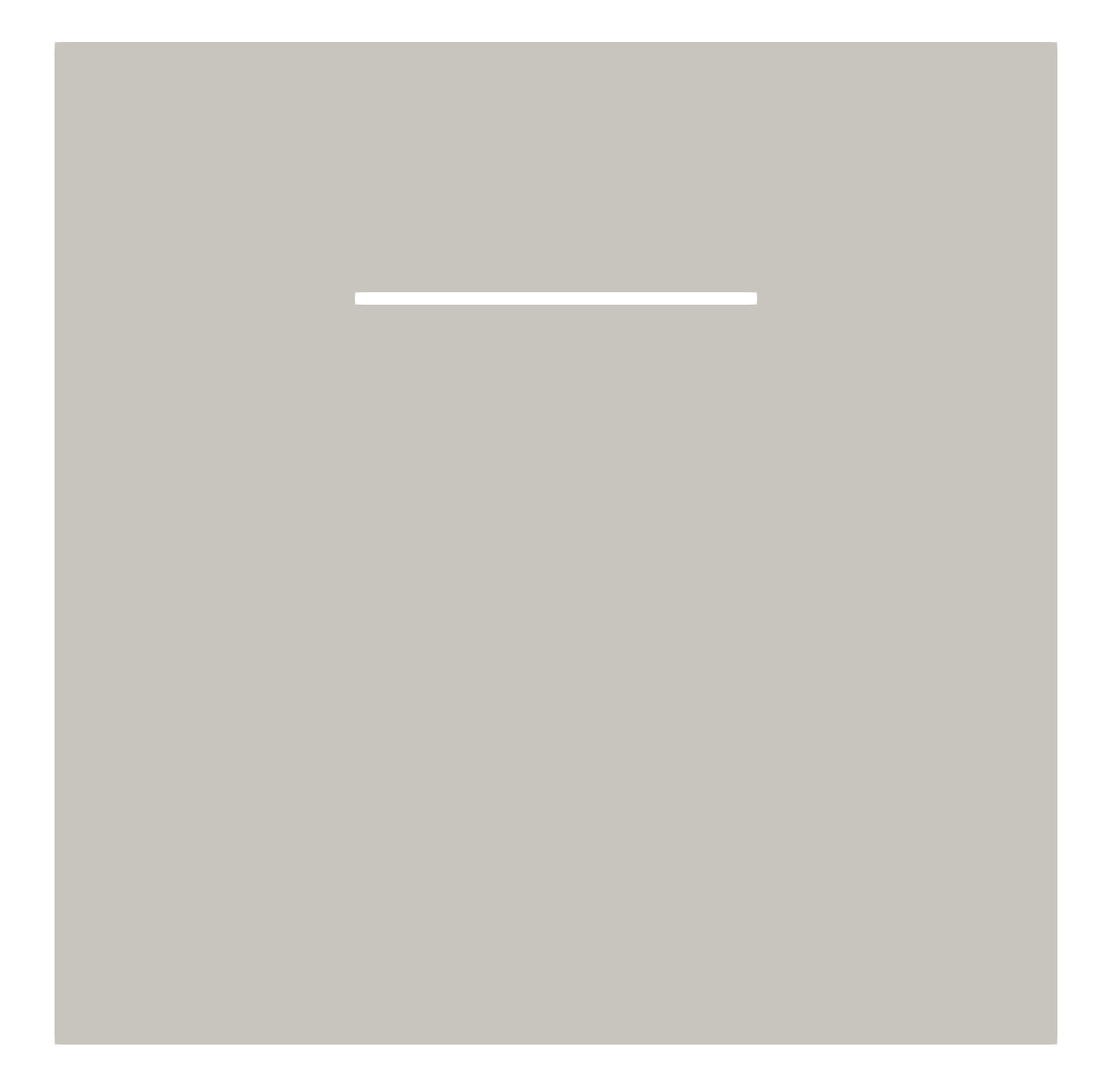} 
\caption{{\it Incompatible origami with cuts}: The domain is the same as in Fig.~\ref{fig:set-up-lifted-sq} (left), but with cuts on the inner square boundary. Domains with two different cuts are displayed in this image.}
\label{fig:cuts-domain}
\end{figure}

We display in Fig.\ref{fig:cuts-solutions} the computed solutions for two scenarios, one with cuts added to all four sides of the inner square and another with a single cut on one side. We observe that the presence of cuts releases the wrinkles across the interfaces in the inner region. The inner square now experiences fewer constraints and lifts up more prominently to accommodate the incompatibility, compared to the experiments in Section \ref{sec:lifted-sq-no-creases}. Additionally, the compression in the outer region across the interfaces, characterized by the contraction ratio $\lambda$, gives rise to waves near the interfaces in the outer region. Interestingly, the case with a single cut appears to exhibit larger wave amplitudes compared to the case with four cuts. 
\shuo{The plausible explanations for this phenomenon are as follows: 1. In the single-cut scenario, the mismatch on the sides of the inner square is greater compared to the four-cuts case; 2. The material with a single cut has reduced flexibility (i.e., fewer cuts) to accommodate the mismatch.}

The final energies obtained for four cuts, one cut, and no cut are, respectively
\[
E_h[\vy_h^{\infty}] \, = \, 0.0173, \,\, 0.0269, \,\, 0.0303;
\]
the metric defect $e^1_h[\vy_h^{\infty}]$ exhibits a similar trend, indicating that configurations with larger mismatch lengths face greater difficulty in satisfying the metric $g$. The decrease in energy and metric defect as the number of cuts increases confirms the relaxation effect achieved by introducing cuts to the domain. This observation further supports our earlier conclusion of Section \ref{sec:lifted-sq} regarding the non-existence of isometric immersions with the desired regularity for incompatible origami.

\begin{figure}[htbp]
  \includegraphics[width=.35\textwidth]{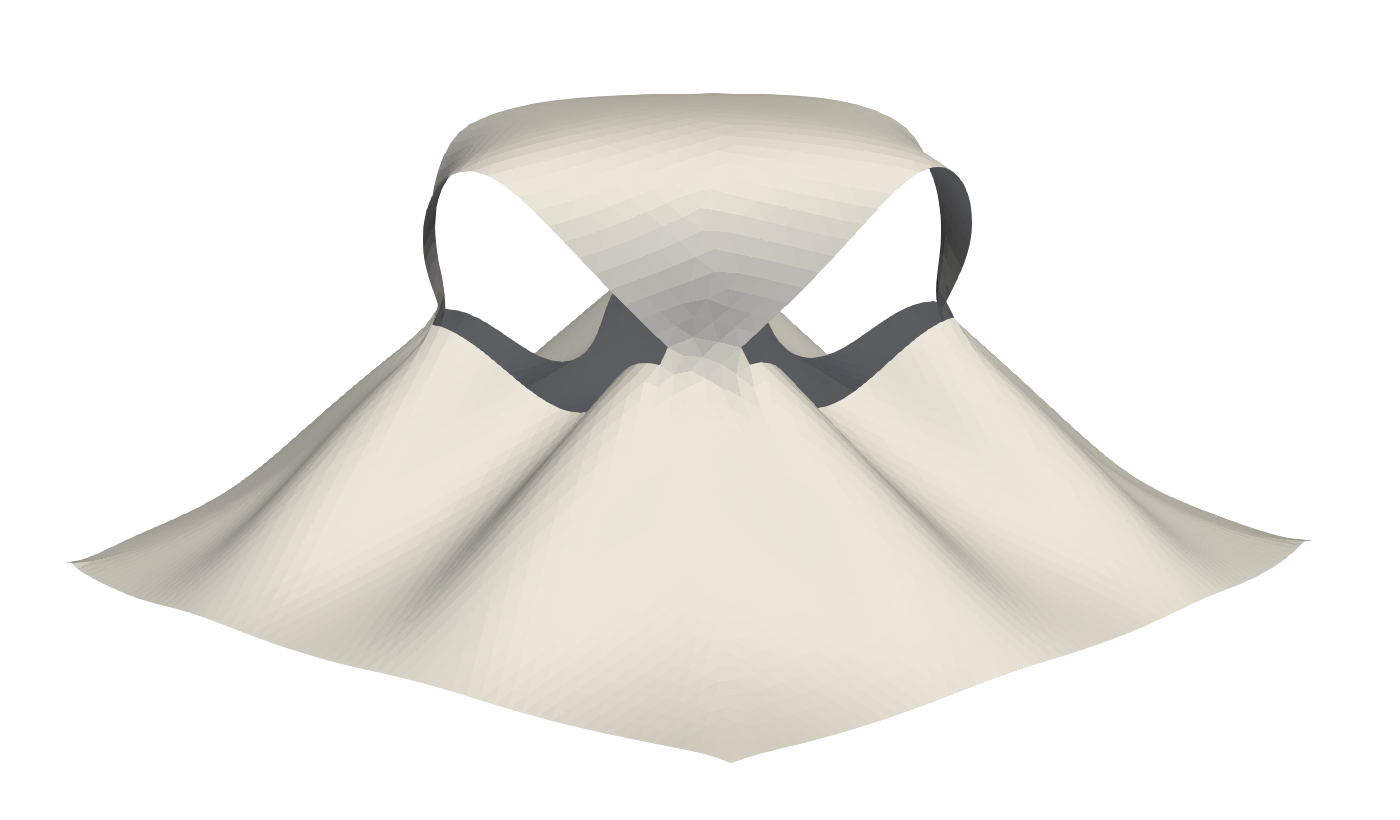}
  \includegraphics[width=.37\textwidth]{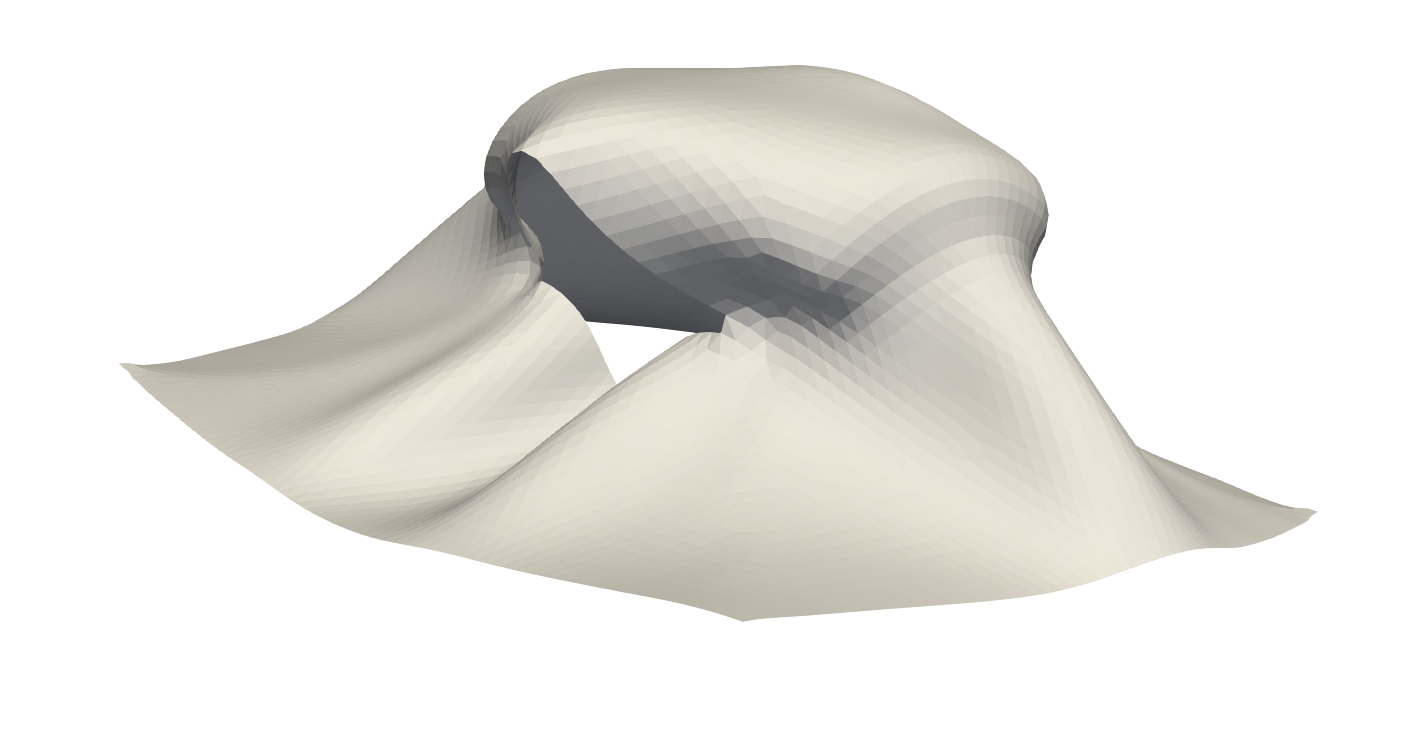}
   \includegraphics[width=.25\textwidth]{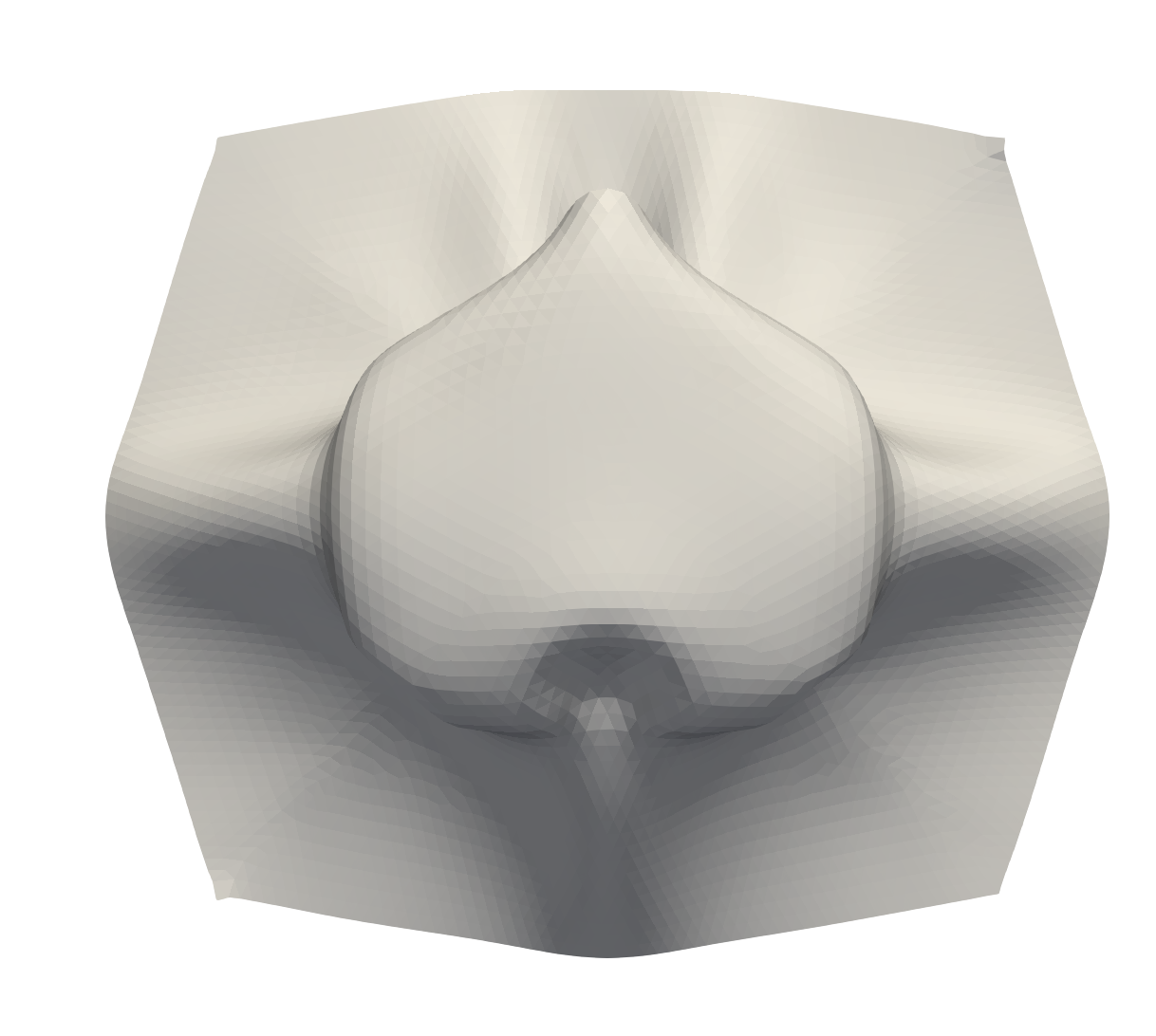}
\caption{{\it Lifted squares with cuts.} Left: computed solution with cuts \lucas{on all $4$ sides of the inner square.} Middle and Right: computed solution with cuts on one side of the inner square, with different angles of viewing. Compared to Figs.~\ref{fig:set-up-lifted-sq-2} and \ref{fig:lifted-square-exp-compare}, cuts relax wrinkle formation by allowing incompatible length changes.}
\label{fig:cuts-solutions}
\end{figure}

\section{Conclusions}
This paper deals with a membrane model of nematic liquid crystal polymer networks (LCNs). The two main threads in this paper are asymptotics and computation. Our main contributions as are follows.
\begin{itemize}
\item
\shuo{{\it Asymptotic modeling and profiles}.} We use formal asymptotics, in the spirit of \cite{ozenda2020blend}, to derive a reduced model of LCNs  involving the nonconvex stretching energy \eqref{eq:stretching-energy} without inextensibility constraint. We prove crucial properties of the reduced model such as the characterization of zero energy states in terms of deformations that satisfy a metric constraint and an energy gap relative to \cite{ozenda2020blend}. We utilize a novel formal asymptotic method to explore shapes of high-order degree defects.
\item
 \shuo{{\it Numerics: discretization}.} We propose a finite element method for the stretching energy consisting of continuous piecewise affine functions over a shape regular 2d mesh made of triangles. We augment the stretching energy with a regularization term involving a discrete $H^2$-norm. The latter acts as a selection mechanism to prevent oscillations of the solution. \shuo{The minimizers of the discrete energy converge under a realistic regularity assumption that permits creases, resulting in a quadratic scaling of the discrete energy. This result is stated in Theorem \ref{T:convergence}, which we prove in our companion paper \cite{bouck2022NA}.} 
We employ an implicit nonlinear gradient flow to minimize the discrete energy, and a Newton method to solve each sub-problem. 
\item
  \shuo{{\it Numerics: computations}.} We present numerous computations highlighting many features of the discrete membrane model. Simulations included shapes coming from LCNs with point defects, nonisometric compatible and incompatible origami. We embark on a substantial assessment of the novel computations of incompatible origami, thoroughly exploring their intricacies, implications in modeling and analysis, predictive power and connection to ongoing lab experiments.
\end{itemize}
%
%
\section*{Acknowledgements}\label{sec:acknowledgements}
The authors thank Grant Bauman and Timothy J.\ White of the University of Colorado, Boulder, who generously shared their experimental data and engaged in insightful discussions. Additionally, we express our appreciation to Ian Tobasco from the University of Illinois, Chicago, for his fruitful conversations, bringing our attention to the reference \cite{padilla2023preparation}, and suggesting the visualization of pointwise metric error.

Lucas Bouck was supported by the NSF grant DGE-1840340. Ricardo H. Nochetto and Shuo Yang were partially supported by NSF grant DMS-1908267. \shuo{Shuo Yang was also partially supported by National Key R\&D Program of China (Grant No. 2021YFA0719200).}

\bibliographystyle{unsrt}
\bibliography{LCE_ref}

\end{document}